\providecommand{\U}[1]{\protect \rule{.1in}{.1in}}
\newtheorem{theorem}{Theorem}[section]
\newtheorem{corollary}[theorem]{Corollary}
\newtheorem{definition}[theorem]{Definition}
\newtheorem{example}[theorem]{Example}
\newtheorem{lemma}[theorem]{Lemma}
\newtheorem{proposition}[theorem]{Proposition}
\newtheorem{remark}[theorem]{Remark}
\newenvironment{proof}[1][Proof]{\noindent \textbf{#1.} }{\  \rule{0.5em}{0.5em}}
\numberwithin{equation}{section}
\begin{document}

\title{A universal robust limit theorem for nonlinear L\'evy processes under
sublinear expectation\thanks{We thank two referees for their constructive and
helpful comments, which help to improve the presentation.}}
\author{Mingshang Hu\thanks{Zhongtai Securities Institute for Financial Studies,
Shandong University, Jinan, Shandong 250100, China. humingshang@sdu.edu.cn. Hu
is supported by the National Key R\&D Program of China (No. 2018YFA0703900),
the National Natural Science Foundation of China (No. 11671231), and the Qilu
Young Scholars Program of Shandong University. }
\and Lianzi Jiang\thanks{College of Mathematics and Systems Science, Shandong
University of Science and Technology, Qingdao, Shandong 266590, China, and
Zhongtai Securities Institute for Financial Studies, Shandong University,
Jinan, Shandong 250100, China. jianglianzi95@163.com.}
\and Gechun Liang\thanks{Department of Statistics, The University of Warwick,
Coventry CV4 7AL, U.K. g.liang@warwick.ac.uk.}
\and Shige Peng\thanks{School of Mathematics, Shandong University, Jinan, Shandong
250100, China. peng@sdu.edu.cn. Peng is supported by the Tian Yuan Projection
of the National Natural Science Foundation of China (No. 11526205 and
11626247) and the National Basic Research Program of China (973 Program) (No.
2007CB814900 (Financial Risk)).} }
\date{}
\maketitle

\textbf{Abstract}. This article establishes a universal robust limit theorem
under a sublinear expectation framework. Under moment and consistency
conditions, we show that, for $\alpha \in(1,2)$, the i.i.d. sequence
\[
\left \{  \left(  \frac{1}{\sqrt{n}}\sum_{i=1}^{n}X_{i},\frac{1}{n}\sum
_{i=1}^{n}Y_{i},\frac{1}{\sqrt[\alpha]{n}}\sum_{i=1}^{n}Z_{i}\right)
\right \}  _{n=1}^{\infty}%
\]
converges in distribution to $\tilde{L}_{1}$, where $\tilde{L}_{t}=(\tilde
{\xi}_{t},\tilde{\eta}_{t},\tilde{\zeta}_{t})$, $t\in \lbrack0,1]$, is a
multidimensional nonlinear L\'{e}vy process with an uncertainty set $\Theta$
as a set of L\'{e}vy triplets. This nonlinear L\'{e}vy process is
characterized by a fully nonlinear and possibly degenerate partial
integro-differential equation (PIDE)
\[
\left \{
\begin{array}
[c]{l}%
\displaystyle \partial_{t}u(t,x,y,z)-\sup \limits_{(F_{\mu},q,Q)\in \Theta
}\left \{  \int_{\mathbb{R}^{d}}\delta_{\lambda}u(t,x,y,z)F_{\mu}
(d\lambda)\right. \\
\displaystyle \text{\  \  \  \  \  \  \  \  \  \  \  \  \  \  \  \ }\left.  +\langle
D_{y}u(t,x,y,z),q\rangle+\frac{1}{2}tr[D_{x}^{2}u(t,x,y,z)Q]\right \}  =0,\\
\displaystyle u(0,x,y,z)=\phi(x,y,z),\  \  \forall(t,x,y,z)\in \lbrack
0,1]\times \mathbb{R}^{3d},
\end{array}
\right.
\]
with $\delta_{\lambda}u(t,x,y,z):=u(t,x,y,z+\lambda)-u(t,x,y,z)-\langle
D_{z}u(t,x,y,z),\lambda \rangle$. To construct the limit process $(\tilde
{L}_{t})_{t\in \lbrack0,1]}$, we develop a novel weak convergence approach
based on the notions of tightness and weak compactness on a sublinear
expectation space. We further prove a new type of L\'{e}vy-Khintchine
representation formula to characterize $(\tilde{L}_{t})_{t\in \lbrack0,1]}$. As
a byproduct, we also provide a probabilistic approach to prove the existence
of the above fully nonlinear degenerate PIDE. \newline

\textbf{Key words}. Universal robust limit theorem, Partial
integro-differential equation, Nonlinear L\'{e}vy process, $\alpha$-stable
distribution, Sublinear expectation\newline

\textbf{MSC-classification}. 60F05, 60G51, 60G52, 60G65, 45K05

\section{Introduction}

Motivated by measuring risks under model uncertainty, Peng
\cite{P2004,P2007,P20081,P2010} introduced the notion of sublinear expectation
space, called $G$-expectation space. The $G$-expectation theory has been
widely used to evaluate random outcomes, not using a single probability
measure, but using the supremum over a family of possibly mutually singular
probability measures.

One of the fundamental results in the theory is Peng's robust central limit
theorem introduced in \cite{P20082,P2019,P2010}. Let $\{(X_{i},Y_{i}%
)\}_{i=1}^{\infty}$ be an i.i.d. sequence of random variables on a sublinear
expectation space $(\Omega,\mathcal{H},\mathbb{\hat{E}})$. Under certain
moment conditions, Peng proved that there exists a $G$-distributed random
variable $(\xi,\eta)$ such that
\[
\lim_{n\rightarrow \infty}\mathbb{\hat{E}}\bigg[\phi \bigg(\frac{1}{\sqrt{n}%
}\sum_{i=1}^{n}{X_{i}},\frac{1}{n}\sum_{i=1}^{n}Y_{i}%
\bigg)\bigg]=\mathbb{\tilde{E}}[\phi(\xi,\eta)],
\]
for any test function $\phi$. The $G$-distributed random variables $(\xi
,\eta)$ describes volatility and mean uncertainty, and can be characterized
via a fully nonlinear parabolic PDE, i.e., the function
$u(t,x,y):=\mathbb{\tilde{E}}[\phi(x+\sqrt{t}\xi,y+t\eta)]$ solves
\begin{equation}
\left \{
\begin{array}
[c]{l}%
\partial_{t}u(t,x,y)-G(D_{y}u,D_{x}^{2}u)=0,\\
u(0,x,y)=\phi(x,y),
\end{array}
\right.  \label{0.1}%
\end{equation}
where the sublinear function
\[
G\left(  p,A\right)  :=\mathbb{\hat{E}}\left[  \frac{1}{2}\left \langle
AX_{1},X_{1}\right \rangle +\left \langle p,Y_{1}\right \rangle \right]
,\ (p,A)\in \mathbb{R}^{d}\times \mathbb{S(}d\mathbb{)}.
\]
This limit theorem was established by Peng around 2008 (see \cite{P20082})
using the regularity theory of fully nonlinear PDEs from
\cite{CC1995,Krylov1987, Wang1992}. The corresponding convergence rate was
established by Fang et al \cite{FPSS2019} and Song \cite{Song2020} using
Stein's method and later by Krylov \cite{Krylov2020} using stochastic control
method under different model assumptions. More recently, Huang and Liang
\cite{HL2020} studied the convergence rate of a more general central limit
theorem via a monotone approximation scheme.

To further describe jump uncertainty, Hu and Peng \cite{HP2009,HP2021}
introduced a class of nonlinear L\'{e}vy processes with finite activity jumps,
called $G$-L\'{e}vy processes in the setting of sublinear expectation, and
built a type of L\'{e}vy-Khintchine representation for $G$-L\'{e}vy processes
by relating to a class of fully nonlinear partial integro-differential
equations (PIDEs).
%This provides a basic idea for the study of general nonlinear L\'{e}vy processes.
%On the basis of it,
For given characteristics, more general nonlinear L\'{e}vy processes with
infinite activity jumps have been studied by Neufeld and Nutz \cite{NN2017}
(see also \cite{DKN2020,Kuhn2019,NR2021}). An important class of nonlinear
L\'{e}vy processes is the $\alpha$-stable process $(\zeta_{t})_{t\geq0}$ for
$\alpha \in(1,2)$, whose characteristic is described by an uncertainty set
$\Theta_{0}=\{(F_{k_{\pm}},0,0):k_{\pm}\in K_{\pm}\}$, where $K_{\pm}%
\subset(\lambda_{1},\lambda_{2})$ for some $\lambda_{1},\lambda_{2}>0$,
$(0,0)$ means that $(\zeta_{t})_{t\geq0}$ is a pure jump L\'{e}vy process
without diffusion and drift, and $F_{k_{\pm}}(dz)$ is the $\alpha$-stable
L\'{e}vy measure
\[
F_{k_{\pm}}(dz)=\frac{k_{-}}{|z|^{\alpha+1}}\mathbf{1}_{(-\infty
,0)}(z)dz+\frac{k_{+}}{|z|^{\alpha+1}}\mathbf{1}_{(0,\infty)}(z)dz.
\]
The nonlinear $\alpha$-stable process $(\zeta_{t})_{t\geq0}$ on a sublinear
expectation space $(\Omega,\mathcal{H},\mathbb{\tilde{E}})$ can be
characterized via a fully nonlinear PIDE, i.e., the function
$u(t,x):=\mathbb{\tilde{E}}[\phi(x+\zeta_{t})]$ solves
\begin{equation}
\left \{
\begin{array}
[c]{l}%
\displaystyle \partial_{t}u(t,x)-\sup \limits_{k_{\pm}\in K_{\pm}}\left \{
\int_{\mathbb{R}}\delta_{\lambda}u(t,x)F_{k_{\pm}}(d\lambda)\right \}  =0,\\
\displaystyle u(0,x)=\phi(x),
\end{array}
\right.  \label{0.2}%
\end{equation}
where $\delta_{\lambda}u(t,x):=u(t,x+\lambda)-u(t,x)-D_{x}u(t,x)\lambda$.

The corresponding limit theorem for $\alpha$-stable processes under sublinear
expectation was established by Bayraktar and Munk \cite{BM2016}. Let
$\{Z_{i}\}_{i=1}^{\infty}$ be an i.i.d. sequence of real-valued random
variables on a sublinear expectation space $(\Omega,\mathcal{H},\mathbb{\hat
{E})}$ satisfying certain moment and consistency conditions, they proved that
there exists a nonlinear $\alpha$-stable process $(\zeta_{t})_{t\geq0}$ such
that
\[
\lim_{n\rightarrow \infty}\mathbb{\hat{E}}\bigg[\phi \bigg(\frac{1}%
{\sqrt[\alpha]{n}}\sum_{i=1}^{n} Z_{i}\bigg)\bigg]=\mathbb{\tilde{E}}%
[\phi(\zeta_{1})],
\]
for any test function $\phi$. Their proof relies on the interior regularity
results of fully nonlinear PIDEs from \cite{CC1995,LD20161,LD20162}. More
recently, Hu et al \cite{HJL2021} established the corresponding convergence
rate via a monotone approximation scheme.

The aim of this article is to \emph{study the robust limit theorem for a
multidimensional nonlinear L\'evy process under a sublinear expectation
framework}. To be more specific, let $\{(X_{i},Y_{i},Z_{i})\}_{i=1}^{\infty}$
be an i.i.d. sequence of $\mathbb{R}^{3d}$-valued random variables on a
sublinear expectation space $(\Omega,\mathcal{H},\mathbb{\hat{E})}$ and
$\alpha \in(1,2)$. Then, the first question is under what conditions does the
following i.i.d. sequence
\[
\left \{  \left(  \frac{1}{\sqrt{n}}\sum_{i=1}^{n}X_{i},\frac{1}{n}\sum
_{i=1}^{n}Y_{i},\frac{1}{\sqrt[\alpha]{n}} \sum_{i=1}^{n}Z_{i}\right)
\right \}  _{n=1}^{\infty}%
\]
converge? If so, then the second question is how to characterize the limit? We
provide affirmative answers for both questions in Theorem \ref{main theorem},
which is dubbed as \emph{a universal robust limit theorem} under sublinear
expectation. The result covers all the existing robust limit theorems in the
literature, namely, Peng's robust central limit theorem for $G$-distribution
(see \cite{P20082}) and Bayraktar-Munk's robust limit theorem for $\alpha
$-stable distribution (see \cite{BM2016}). One remarkable feature of the
result is that $(X_{1},Y_{1},Z_{1})$ may depend on each other, so one cannot
simply combine the robust limit theorems in \cite{BM2016} and \cite{P20082}.
Moreover, the conditions that we propose are mild. In fact, they are weaker
than the characterization condition proposed in linear setting (see Remark
\ref{Remark}) and the consistency condition in nonlinear setting (see Remark
\ref{Remark0}).

On the other hand, the existing methods for the robust limit theorems do not
work, because \cite{BM2016,P20082} reply on the regularity estimates of the
fully nonlinear PDE (\ref{0.1}) and PIDE (\ref{0.2}). However, the required
regularity for the general equation (\ref{PIDE}) in Theorem \ref{main theorem}
is unknown to date. Moreover, it seems that most of the existing methods are
analytical and heavily rely on the regularity theory. It is natural to ask
whether one can establish a probabilistic proof as in the classical linear
expectation case. As expected, weak convergence plays a pivotal role. Peng
\cite{P2010_CLT} firstly introduced the notions of tightness and weak
compactness on a sublinear expectation space and provided an alternative proof
for his robust central limit theorem. In Theorem
\ref{The construction of Levy process}, we further develop this weak
convergence approach to establish a Donsker-type result showing that the limit
indeed exists and is a nonlinear L\'evy process $\tilde{L}_{t}:=(\tilde{\xi
}_{t},\tilde{\eta} _{t},\tilde{\zeta}_{t})$ at $t=1$ with $(\tilde{\xi}%
_{1},\tilde{\eta} _{1})$ following $G$-distribution.

A more challenging task is to characterize the third component $\tilde{\zeta
}_{t}$ and also $\tilde{L}_{t}$ as a whole. This will in turn link the
nonlinear L\'evy process $(\tilde{L}_{t})_{t\in[0,1]}$ with the fully
nonlinear PIDE (\ref{PIDE}). However, the proofs for the classical linear
expectation cases (e.g. CLT and $\alpha$-stable limit theorem) are to a
considerable extent based on characteristic function techniques, which do not
exist in the sublinear framework. A new type of L\'evy-Khintichine formula is
therefore needed. Note that the proof of this representation formula is also
an important open question left in the literature (see Remark 23 in
\cite{HP2021} and Page 71 in \cite{NN2017}). We overcome this difficulty by
deriving a new estimate for the $\alpha$-stable L\'{e}vy measure (see Theorem
\ref{recursive}), which in turn enables us to prove a new type of
L\'evy-Khintichine representation formula for the nonlinear L\'evy process
(see Theorem \ref{represent theorem}). Thanks to the connection with the fully
nonlinear PIDE (\ref{PIDE}), we also obtain the existence of its viscosity
solution as a byproduct.

The article is organized as follows. In Section 2, we review some necessary
results about sublinear expectation. Section 3 details our main result: the
universal robust limit theorem in Theorem \ref{main theorem}. The proofs of
the main theorem is given in Section 4. Finally, an example highlighting the
applications of our main result is given in Section 5.

%%%%%%%%%%%%%%%%%%%%%%%%%%%%%%%%%%%%%%%%%%%%%%%%%%%%%%%%%%%%%%%%%%%%%%%%%%%%%%%%%%%%%%

\section{Preliminaries}

This section briefly introduces notions and preliminaries in the sublinear
expectation framework. For more details, we refer the reader to
\cite{P2007,P20081,P2010_CLT,P2010} and the references therein.

\subsection{Sublinear expectation}

Let $\Omega$ be a given set and let $\mathcal{H}$ be a linear space of real
valued functions defined on $\Omega$ such that $1\in \mathcal{H}$ and $\vert X
\vert \in \mathcal{H}$ if $X\in \mathcal{H}$. Then, a sublinear expectation is
defined as follows.

\begin{definition}
A functional $\mathbb{\hat{E}}$: $\mathcal{H}\rightarrow \mathbb{R}$ is called
a sublinear expectation if, for all $X,Y\in \mathcal{H}$, it satisfies the
following properties.

\begin{description}
\item[(i)] (Monotonicity) $\mathbb{\hat{E}}[X] \geq \mathbb{\hat{E}}[Y]$, if
$X\geq Y$;

\item[(ii)] (Constant preservation) $\mathbb{\hat{E}}[c] =c$, for
$c\in \mathbb{R}$;

\item[(iii)] (Sub-additivity) $\mathbb{\hat{E}}[X+Y] \leq \mathbb{\hat{E}} [X]
+\mathbb{\hat{E}}[Y] ;$

\item[(iv)] (Positive homogeneity) $\mathbb{\hat{E}}[\lambda X] =\lambda
\mathbb{\hat{E}}[X]$, for $\lambda>0$.
\end{description}
\end{definition}

The triplet $(\Omega,\mathcal{H},\mathbb{\hat{E})}$ is called a sublinear
expectation space.
%If (i) and (ii) are satisfied, $\mathbb{\hat{E}}$ is called
%a nonlinear expectation and the triplet $(\Omega,\mathcal{H},\mathbb{\hat{E}%
%)}$ is called a nonlinear expectation space.
From the definition of the sublinear expectation $\mathbb{\hat{E}}$, the
following results can be easily obtained.

\begin{proposition}
\label{Prop^E Plimi}For $X,Y$ $\in \mathcal{H}$, we have

\begin{description}
\item[(i)] if $\mathbb{\hat{E}}[X] =-\mathbb{\hat{E}}[-X]$, then
$\mathbb{\hat{E}}[X+Y] =\mathbb{\hat{E}}[X] +\mathbb{\hat{E}}[Y] ;$

\item[(ii)] $|\mathbb{\hat{E}}[X]-\mathbb{\hat{E}}[Y]|\leq \mathbb{\hat{E}%
}[\vert X-Y\vert]$, i.e., $|\mathbb{\hat{E}}[X] -\mathbb{\hat{E}}[Y]
|\leq \mathbb{\hat{E}}[X-Y] \vee \mathbb{\hat{E}}[Y-X];$

\item[(iii)] $\mathbb{\hat{E}}[\vert XY \vert] \leq(\mathbb{\hat{E}}[\vert
X\vert^{p}])^{1/p} (\mathbb{\hat{E}}[\vert Y\vert^{q}] )^{1/q},$ for $1\leq
p,q<\infty$ with $\frac{1}{p}+\frac{1}{q}=1.$
\end{description}
\end{proposition}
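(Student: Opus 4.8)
The plan is to derive all three assertions directly from the four defining axioms of Definition 2.1, without invoking any representation theorem. Properties (i) and (ii) rely only on sub-additivity, monotonicity and positive homogeneity, while (iii) is the sublinear counterpart of H\"older's inequality and will be obtained from Young's inequality together with the same axioms.

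For (i), I would first note that sub-additivity gives the upper bound $\mathbb{\hat{E}}[X+Y]\leq \mathbb{\hat{E}}[X]+\mathbb{\hat{E}}[Y]$ for free. For the reverse inequality I would write $Y=(X+Y)+(-X)$ and apply sub-additivity once more to get $\mathbb{\hat{E}}[Y]\leq \mathbb{\hat{E}}[X+Y]+\mathbb{\hat{E}}[-X]$; using the hypothesis $\mathbb{\hat{E}}[-X]=-\mathbb{\hat{E}}[X]$ this rearranges to $\mathbb{\hat{E}}[X]+\mathbb{\hat{E}}[Y]\leq \mathbb{\hat{E}}[X+Y]$, and the two bounds combine to the claimed equality. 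For (ii), sub-additivity applied to $X=(X-Y)+Y$ yields $\mathbb{\hat{E}}[X]-\mathbb{\hat{E}}[Y]\leq \mathbb{\hat{E}}[X-Y]$, and by symmetry $\mathbb{\hat{E}}[Y]-\mathbb{\hat{E}}[X]\leq \mathbb{\hat{E}}[Y-X]$, so $|\mathbb{\hat{E}}[X]-\mathbb{\hat{E}}[Y]|\leq \mathbb{\hat{E}}[X-Y]\vee \mathbb{\hat{E}}[Y-X]$. Since $X-Y\leq|X-Y|$ and $Y-X\leq|X-Y|$ pointwise, monotonicity bounds each term by $\mathbb{\hat{E}}[|X-Y|]$, giving the first form of (ii).

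For (iii), set $A:=(\mathbb{\hat{E}}[|X|^{p}])^{1/p}$ and $B:=(\mathbb{\hat{E}}[|Y|^{q}])^{1/q}$ and assume first that $A,B>0$. Young's inequality gives the pointwise bound $\frac{|X|}{A}\cdot\frac{|Y|}{B}\leq \frac{1}{p}\frac{|X|^{p}}{A^{p}}+\frac{1}{q}\frac{|Y|^{q}}{B^{q}}$. Applying $\mathbb{\hat{E}}$ and using monotonicity, sub-additivity, positive homogeneity and constant preservation in turn, the right-hand side evaluates to $\frac{1}{p}+\frac{1}{q}=1$, whence $\mathbb{\hat{E}}[|XY|]\leq AB$. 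The main technical point is the degenerate case $A=0$ or $B=0$: here I would replace $A,B$ by $A_{\varepsilon}:=(\mathbb{\hat{E}}[|X|^{p}]+\varepsilon)^{1/p}$ and $B_{\varepsilon}:=(\mathbb{\hat{E}}[|Y|^{q}]+\varepsilon)^{1/q}$, run the same argument to obtain $\mathbb{\hat{E}}[|XY|]\leq A_{\varepsilon}B_{\varepsilon}$, and let $\varepsilon\downarrow0$. This perturbation is the only step requiring any care, since the core inequality is otherwise a direct consequence of the axioms.
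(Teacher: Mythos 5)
Your proposal is correct, and it is essentially the argument the paper has in mind: the paper states this proposition without proof (remarking only that it follows easily from the definition), and your derivation --- the two sub-additivity decompositions $Y=(X+Y)+(-X)$ and $X=(X-Y)+Y$ for (i) and (ii), plus monotonicity, and Young's inequality with the $\varepsilon$-perturbation $A_{\varepsilon}=(\mathbb{\hat{E}}[|X|^{p}]+\varepsilon)^{1/p}$ to handle the degenerate case in (iii) --- is the standard axiomatic proof of these facts in the sublinear expectation literature. No gaps.
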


\begin{definition}
We say $X_{1}$ on a sublinear expectation space $(\Omega_{1},\mathcal{H}%
_{1},\hat{E}_{1})$ and $X_{2}$ on another sublinear expectation space
$(\Omega_{2},\mathcal{H}_{2},\mathbb{\hat{E}}_{2})$ identically distributed,
if $\mathbb{\hat{E}}_{1}[\varphi(X_{1})]=\mathbb{\hat{E}} _{2}[\varphi
(X_{2})]$, for all $\varphi \in{C_{b,Lip}}(\mathbb{R}^{n})$, the space of
bounded Lipschitz continuous functions on $\mathbb{R}^{n}$.
\end{definition}

The concept of independence plays a pivotal role in sublinear expectation.
Notably, $Y$ is independent from $X$ does not necessarily imply that $X$ is
independent from $Y$.

\begin{definition}
Let $(\Omega,\mathcal{H},\mathbb{\hat{E})}$ be a sublinear expectation space.
An $n$-dimensional random variable $Y$ is said to be independent from another
$m$-dimensional random variable $X$ under $\mathbb{\hat{E}}[\cdot]$, denoted
by $Y \perp \! \! \! \perp X$, if for every test function $\varphi \in
C_{b,Lip}(\mathbb{R}^{m}\times \mathbb{R}^{n})$ we have
\[
\mathbb{\hat{E}}\left[  \varphi(X,Y)\right]  =\mathbb{\hat{E}}\left[
\mathbb{\hat{E}}\left[  \varphi(x,Y)\right]  _{x=X}\right]  .
\]
$\bar{X}$ is said to be an independent copy of $X$ if $\bar{X}\overset{d}{=}X$
and $\bar{X}\perp \! \! \! \perp X$.
\end{definition}

The independence assumption implies the additivity of $\mathbb{\hat{E}}$ as
shown in the following proposition.

\begin{proposition}
\label{Y independent X}For $X,Y \in \mathcal{H}$, if $Y\perp \! \! \! \perp X$,
then
\[
\mathbb{\hat{E}}[X+Y] =\mathbb{\hat{E}}[X] +\mathbb{\hat{E}}[Y].
\]

\end{proposition}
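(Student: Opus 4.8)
The plan is to prove the two inequalities separately. The inequality $\mathbb{\hat{E}}[X+Y]\leq \mathbb{\hat{E}}[X]+\mathbb{\hat{E}}[Y]$ is immediate from sub-additivity (property (iii) of the definition), so the entire content lies in the reverse bound $\mathbb{\hat{E}}[X+Y]\geq \mathbb{\hat{E}}[X]+\mathbb{\hat{E}}[Y]$, which is exactly where independence must be used (it genuinely fails without it). For this I would feed the ``sum'' function $h(x,y)=x+y$ into the defining identity for ``$Y$ independent from $X$''. Before doing so I would record the elementary constant-shift rule $\mathbb{\hat{E}}[X+c]=\mathbb{\hat{E}}[X]+c$ for every $c\in\mathbb{R}$: sub-additivity together with constant preservation gives $\mathbb{\hat{E}}[X+c]\leq \mathbb{\hat{E}}[X]+c$, while writing $X=(X+c)+(-c)$ and applying sub-additivity and constant preservation again yields the opposite bound.

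With this rule in hand, the core computation is a double application of the constant-shift. For a fixed real $x$ the random variable $x+Y$ satisfies $\mathbb{\hat{E}}[x+Y]=x+\mathbb{\hat{E}}[Y]$, so the function $x\mapsto \mathbb{\hat{E}}[h(x,Y)]$ equals $x\mapsto x+\mathbb{\hat{E}}[Y]$. Substituting the random variable $X$ for $x$ and taking the outer expectation, and using that $\mathbb{\hat{E}}[Y]$ is a constant, gives $\mathbb{\hat{E}}\big[\mathbb{\hat{E}}[h(x,Y)]_{x=X}\big]=\mathbb{\hat{E}}[X+\mathbb{\hat{E}}[Y]]=\mathbb{\hat{E}}[X]+\mathbb{\hat{E}}[Y]$. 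The definition of independence then identifies the left-hand side with $\mathbb{\hat{E}}[h(X,Y)]=\mathbb{\hat{E}}[X+Y]$, which is precisely the desired equality.

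The one subtle point — and the step I expect to be the main obstacle — is that $h(x,y)=x+y$ does not belong to $C_{b,Lip}(\mathbb{R}\times\mathbb{R})$, so the independence identity cannot be invoked for $h$ directly. I would remedy this by truncation: set $\psi_{N}(t)=(t\wedge N)\vee(-N)$ so that $\psi_{N}\circ h\in C_{b,Lip}$, apply the independence identity to $\psi_{N}(x+y)$, and then let $N\to\infty$. Passing to the limit on both sides requires controlling $|\mathbb{\hat{E}}[\psi_{N}(X+Y)]-\mathbb{\hat{E}}[X+Y]|$ and the analogous inner error for $x+Y$; by Proposition \ref{Prop^E Plimi}(ii) each is dominated by a remainder of the form $\mathbb{\hat{E}}[(|X+Y|-N)^{+}]$ (respectively $\mathbb{\hat{E}}[(|x+Y|-N)^{+}]$), where $(|X+Y|-N)^{+}\leq |X|+|Y|$. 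The crux is therefore to show that these remainders vanish as $N\to\infty$, using the finiteness of $\mathbb{\hat{E}}[|X|]$ and $\mathbb{\hat{E}}[|Y|]$, and to do so uniformly enough in $x$ that the inner convergence can be carried through the outer expectation (here one notes that $x\mapsto\mathbb{\hat{E}}[\psi_{N}(x+Y)]$ is $1$-Lipschitz and bounded, hence an admissible test function at each stage). This tail-estimation, rather than the formal algebraic manipulation, is the only place where genuine work is required.
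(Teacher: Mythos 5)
The paper states Proposition \ref{Y independent X} without proof: it is a standard fact of Peng's framework, and the intended argument is exactly your core computation
\[
\mathbb{\hat{E}}[X+Y]=\mathbb{\hat{E}}\big[\mathbb{\hat{E}}[x+Y]_{x=X}\big]=\mathbb{\hat{E}}\big[X+\mathbb{\hat{E}}[Y]\big]=\mathbb{\hat{E}}[X]+\mathbb{\hat{E}}[Y],
\]
where the middle step uses your (correct) constant-shift rule. In Peng's setting this computation is legitimate as written because independence is defined (or immediately extended) for test functions in $C_{l,Lip}$, the locally Lipschitz functions of polynomial growth, a class that contains $h(x,y)=x+y$; no truncation is needed. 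So your algebra coincides with the standard proof, and you were right to notice that under the paper's literal $C_{b,Lip}$ definition the sum function is not an admissible test function.

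The repair you propose, however, has a genuine gap, and it sits exactly at the step you yourself call the crux. Under a sublinear expectation, finiteness of $\mathbb{\hat{E}}[|X|]$ and $\mathbb{\hat{E}}[|Y|]$ does \emph{not} imply $\mathbb{\hat{E}}[(|X+Y|-N)^{+}]\rightarrow 0$ as $N\rightarrow\infty$: downward monotone convergence fails for suprema of expectations. A one-dimensional counterexample: on $\Omega=\mathbb{N}$ take $X(\omega)=\omega$ and $\mathbb{\hat{E}}=\sup_{n}E_{P_{n}}$ with $P_{n}=(1-\tfrac{1}{n})\delta_{0}+\tfrac{1}{n}\delta_{n}$; then $\mathbb{\hat{E}}[|X|]=1$ while $\mathbb{\hat{E}}[(|X|-N)^{+}]=1$ for every $N$. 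This is precisely why the paper must \emph{assume} conditions of this type, namely $\lim_{\gamma\rightarrow\infty}\mathbb{\hat{E}}[(|X_{1}|^{2}-\gamma)^{+}]=0$ and $\lim_{\gamma\rightarrow\infty}\mathbb{\hat{E}}[(|Y_{1}|-\gamma)^{+}]=0$ in (A1)--(A2) and in Theorem \ref{LLT+CLT}, rather than derive them from moment bounds. There is also a secondary uniformity problem: your inner error bound $\mathbb{\hat{E}}[(|x+Y|-N)^{+}]$ degrades as $|x|\rightarrow\infty$, so even if the tails of $Y$ did vanish, the inner convergence would not be uniform over the range of $X$, which is what passing the limit through the outer expectation requires. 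Consequently the truncation argument cannot be completed from the stated hypotheses; the correct resolution is definitional rather than analytic --- one reads the independence identity on the larger test class $C_{l,Lip}$ (equivalently, for all functions for which both sides are defined), as Peng does, after which the proposition is the one-line computation you wrote down.
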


\subsection{Tightness, weak compactness, and convergence in distribution}

Weak convergence plays an important role in establishing the universal robust
limit theorem. The following definitions of tightness and weak compactness are
adapted from Peng \cite[Definitions 7 and 8]{P2010_CLT}.

\begin{definition}
\label{def_2.7} A sublinear expectation $\mathbb{\hat{E}}$ on $(\mathbb{R}%
^{n},C_{b,Lip}(\mathbb{R}^{n}))$ is said to be tight if for each
$\varepsilon>0$, there exist an $N>0$ and $\varphi \in$ $C_{b,Lip}%
(\mathbb{R}^{n})$ with $\mathbbm{1}_{\{|x|\geq N\}}\leq \varphi$ such that
$\mathbb{\hat{E}}[\varphi]<\varepsilon$.
\end{definition}

\begin{definition}
\label{def_2.8} A family of sublinear expectations $\{ \mathbb{\hat{E}%
}_{\alpha}\}_{\alpha \in \mathcal{A}}$ on $(\mathbb{R}^{n},C_{b,Lip}%
(\mathbb{R}^{n}))$ is said to be tight if there exists a tight sublinear
expectation $\mathbb{\hat{E}}$ on $(\mathbb{R}^{n},C_{b,Lip}(\mathbb{R}^{n}))$
such that
\[
\mathbb{\hat{E}}_{\alpha}[\varphi]-\mathbb{\hat{E}}_{\alpha}[\varphi^{\prime
}]\leq \mathbb{\hat{E}}[\varphi-\varphi^{\prime}],\text{ for each }
\varphi,\varphi^{\prime}\in C_{b,Lip}(\mathbb{R}^{n}).
\]

\end{definition}

\begin{definition}
Let $\{ \mathbb{\hat{E}}_{n}\}_{n=1}^{\infty}$ be a sequence of sublinear
expectations defined on $(\mathbb{R}^{n},C_{b,Lip}(\mathbb{R}^{n}))$. They are
said to be weakly convergent if, for each $\varphi \in C_{b,Lip}(\mathbb{R}%
^{n})$, $\{ \mathbb{\hat{E}}_{n}[\varphi]\}_{n=1}^{\infty}$ is a Cauchy
sequence. A family of sublinear expectations $\{ \mathbb{\hat{E}}_{\alpha
}\}_{\alpha \in \mathcal{A}}$ defined on $(\mathbb{R}^{n},C_{b,Lip}%
(\mathbb{R}^{n}))$ is said to be weakly compact if for each sequence $\{
\mathbb{\hat{E}}_{\alpha_{i}}\}_{i=1}^{\infty}$ there exists a weakly
convergent subsequence.
\end{definition}

The following result is a generalization of the celebrated Prokhorov's theorem
to the sublinear expectation case, first proved in Peng \cite[Theorem
9]{P2010_CLT}. For the reader's convenience, we give the proof of Theorem
\ref{tight theorem} in Appendix under the tightness condition introduced in
Definition \ref{def_2.7}.

\begin{theorem}
[\cite{P2010_CLT}]\label{tight theorem} Let $\{ \mathbb{\hat{E}}_{\alpha
}\}_{\alpha \in \mathcal{A}}$ be a family of tight sublinear expectations on
$(\mathbb{R}^{n},C_{b,Lip}(\mathbb{R}^{n}))$. Then $\{ \mathbb{\hat{E}%
}_{\alpha}\}_{\alpha \in \mathcal{A}}$ is weakly compact, namely, for each
sequence $\{ \mathbb{\hat{E}}_{\alpha_{n}}\}_{n=1}^{\infty}$, there exists a
subsequence $\{ \mathbb{\hat{E}}_{\alpha_{n_{i}}}\}_{i=1}^{\infty}$ such that,
for each $\varphi \in C_{b,Lip}(\mathbb{R}^{n})$, $\{ \mathbb{\hat{E}}%
_{\alpha_{n_{i}}}[\varphi]\}_{i=1}^{\infty}$ is a Cauchy sequence.
\end{theorem}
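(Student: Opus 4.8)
The plan is to carry out the analogue of the Helly--Prokhorov selection procedure, with measures replaced by the sublinear functionals themselves and tightness of measures replaced by the domination hypothesis of Definition~\ref{def_2.8}. Let $\mathbb{\hat{E}}$ denote the dominating tight sublinear expectation, so that for every $\alpha\in\mathcal{A}$ and all $\varphi,\varphi'\in C_{b,Lip}(\mathbb{R}^{n})$,
\[
\mathbb{\hat{E}}_{\alpha}[\varphi]-\mathbb{\hat{E}}_{\alpha}[\varphi']\leq\mathbb{\hat{E}}[\varphi-\varphi'].
\]
Interchanging $\varphi$ and $\varphi'$ and using monotonicity of $\mathbb{\hat{E}}$ gives the two-sided, uniform-in-$\alpha$ estimate $|\mathbb{\hat{E}}_{\alpha}[\varphi]-\mathbb{\hat{E}}_{\alpha}[\varphi']|\leq\mathbb{\hat{E}}[|\varphi-\varphi'|]$. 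This inequality is the heart of the argument: it turns any bound obtained on the test functions under the \emph{single} expectation $\mathbb{\hat{E}}$ into a bound that is uniform over the whole family $\{\mathbb{\hat{E}}_{\alpha}\}_{\alpha\in\mathcal{A}}$.

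First I would fix a countable set $\mathcal{D}\subset C_{b,Lip}(\mathbb{R}^{n})$ that is dense for local uniform convergence while respecting sup-norm bounds, in the sense that for each $\varphi\in C_{b,Lip}(\mathbb{R}^{n})$ there is a constant $C_{\varphi}$ so that for all $N$ and $\delta>0$ some $\psi\in\mathcal{D}$ satisfies $\sup_{|x|\leq N}|\varphi-\psi|\leq\delta$ and $\|\psi\|_{\infty}\leq C_{\varphi}$. Such a family exists by Arzel\`a--Ascoli: for fixed integers $k,N$ the functions in $C_{b,Lip}$ with sup-norm and Lipschitz constant at most $k$ are precompact in $C(\{|x|\leq N\})$, hence totally bounded, so one may extract a countable net from within that sup-norm ball and take the union over $k,N$ and rational $\delta$. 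Now given any sequence $\{\mathbb{\hat{E}}_{\alpha_{n}}\}_{n=1}^{\infty}$, for each fixed $\psi\in\mathcal{D}$ the numbers $\mathbb{\hat{E}}_{\alpha_{n}}[\psi]$ lie in $[\inf\psi,\sup\psi]$ by monotonicity and constant preservation, so they are bounded; a standard diagonal extraction over the countable family $\mathcal{D}$ produces a subsequence $\{\mathbb{\hat{E}}_{\alpha_{n_{i}}}\}_{i=1}^{\infty}$ along which $\{\mathbb{\hat{E}}_{\alpha_{n_{i}}}[\psi]\}_{i=1}^{\infty}$ converges for every $\psi\in\mathcal{D}$.

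The remaining step, upgrading convergence on $\mathcal{D}$ to convergence on all of $C_{b,Lip}(\mathbb{R}^{n})$, is where I expect the main difficulty, since it is precisely here that tightness must be invoked to prevent mass from escaping to infinity. Fix $\varphi\in C_{b,Lip}(\mathbb{R}^{n})$ and $\eta>0$, and set $a=\|\varphi\|_{\infty}+C_{\varphi}$. Tightness of $\mathbb{\hat{E}}$ (Definition~\ref{def_2.7}) supplies $N$ and $g\in C_{b,Lip}(\mathbb{R}^{n})$ with $\mathbbm{1}_{\{|x|\geq N\}}\leq g$ and $\mathbb{\hat{E}}[g]$ as small as we wish; choosing $\psi\in\mathcal{D}$ with $\sup_{|x|\leq N}|\varphi-\psi|$ small and $\|\psi\|_{\infty}\leq C_{\varphi}$ gives the pointwise bound
\[
|\varphi-\psi|\leq \sup_{|x|\leq N}|\varphi-\psi|+a\,g
\]
on all of $\mathbb{R}^{n}$, because the first term controls the ball $\{|x|<N\}$ while $g\geq1$ dominates on $\{|x|\geq N\}$. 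Applying monotonicity, sub-additivity and positive homogeneity of $\mathbb{\hat{E}}$ then yields $|\mathbb{\hat{E}}_{\alpha_{n_{i}}}[\varphi]-\mathbb{\hat{E}}_{\alpha_{n_{i}}}[\psi]|\leq\mathbb{\hat{E}}[|\varphi-\psi|]<\eta/3$ uniformly in $i$ (choosing $g$ via tightness first, then $\psi$ via density). Since $\{\mathbb{\hat{E}}_{\alpha_{n_{i}}}[\psi]\}_{i}$ is Cauchy, a three-term triangle inequality shows $\{\mathbb{\hat{E}}_{\alpha_{n_{i}}}[\varphi]\}_{i}$ is Cauchy as well; as $\varphi$ was arbitrary, the subsequence is weakly convergent, establishing weak compactness. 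The points demanding care in the full write-up are the construction of the sup-norm-controlled countable family $\mathcal{D}$ and the verification that the displayed pointwise bound may legitimately be passed through the monotone functional $\mathbb{\hat{E}}$; both are routine once the compact/tail splitting is in place.
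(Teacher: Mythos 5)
Your proposal is correct and follows essentially the same route as the paper's own proof: a countable family dense on compact balls with sup-norm control, diagonal extraction, and an upgrade to arbitrary $\varphi\in C_{b,Lip}(\mathbb{R}^{n})$ via the compact/tail splitting $|\varphi-\psi|\leq\sup_{|x|\leq N}|\varphi-\psi|+a\,g$ combined with the domination inequality $|\mathbb{\hat{E}}_{\alpha}[\varphi]-\mathbb{\hat{E}}_{\alpha}[\psi]|\leq\mathbb{\hat{E}}[|\varphi-\psi|]$ and a triangle inequality. The only cosmetic difference is that you build the countable family (with the sup-norm bound $\|\psi\|_{\infty}\leq C_{\varphi}$) explicitly via Arzel\`a--Ascoli, whereas the paper posits a countable linear subspace whose restrictions are dense in each $C_{b,Lip}(K_{i})$ and then selects sup-norm-bounded approximants, and it fixes the tightness functions $\phi_{m}$ with $\mathbb{\hat{E}}[\phi_{m}]\leq 1/m$ in advance rather than choosing $g$ per $(\varphi,\eta)$.
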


Given the weak convergence of sublinear expectations, the convergence of
random variables can be defined accordingly as follows.

\begin{definition}
\label{converge in distribution}A sequence of $n$-dimensional random variables
$\{X_{i}\}_{i=1}^{\infty}$ defined on a sublinear expectation space
$(\Omega,\mathcal{H},\mathbb{\hat{E}})$ is said to converge in distribution
(or converge in law) under $\mathbb{\hat{E}}$ if for each $\varphi \in
C_{b,Lip}(\mathbb{R}^{n})$, the sequence $\{ \mathbb{\hat{E} }[\varphi
(X_{i})]\}_{i=1}^{\infty}$ converges. For each random variable $X_{i}$, the
mapping $\mathbb{F}_{X_{i}}[\cdot]:C_{b,Lip}(\mathbb{R} ^{n})\rightarrow
\mathbb{R}$ defined by
\[
\mathbb{F}_{X_{i}}[\varphi]:=\mathbb{\hat{E}}[\varphi(X_{i})],\text{ for
}\varphi \in C_{b,Lip}(\mathbb{R}^{n})
\]
is a sublinear expectation defined on $(\mathbb{R}^{n},C_{b,Lip}%
(\mathbb{R}^{n}))$.
\end{definition}

An immediate corollary of Theorem \ref{tight theorem} and Definition
\ref{converge in distribution} is the following result.

\begin{corollary}
\label{corollary tight theorem} Let $\{X_{i}\}_{i=1}^{\infty}$ be a sequence
of $n$-dimensional random variables defined on a sublinear expectation space
$(\Omega,\mathcal{H},\mathbb{\hat{E}})$. If $\{ \mathbb{F}_{X_{i}}%
\}_{i=1}^{\infty}$ is tight, then there exists a subsequence $\{X_{i_{j}%
}\}_{j=1}^{\infty}\subset \{X_{i}\}_{i=1}^{\infty}$ which converges in distribution.
\end{corollary}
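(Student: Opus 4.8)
The plan is to read off the statement as an immediate application of the Prokhorov-type Theorem \ref{tight theorem}, once the objects are correctly identified. First I would observe that, by Definition \ref{converge in distribution}, each $\mathbb{F}_{X_i}[\varphi]:=\mathbb{\hat{E}}[\varphi(X_i)]$ is itself a sublinear expectation on $\big(\mathbb{R}^n,C_{b,Lip}(\mathbb{R}^n)\big)$. Hence the collection $\{\mathbb{F}_{X_i}\}_{i=1}^\infty$ is a family of sublinear expectations on this space, indexed by the (countable) set $\mathcal{A}=\{1,2,3,\dots\}$, and by hypothesis this family is tight in the sense of Definition \ref{def_2.8}. This is precisely the setting in which Theorem \ref{tight theorem} applies.

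Next I would invoke Theorem \ref{tight theorem} directly, taking the sequence appearing in its conclusion to be the whole sequence $\{\mathbb{F}_{X_i}\}_{i=1}^\infty$. The theorem then provides a subsequence $\{\mathbb{F}_{X_{i_j}}\}_{j=1}^\infty$ with the property that, for every $\varphi\in C_{b,Lip}(\mathbb{R}^n)$, the real sequence $\{\mathbb{F}_{X_{i_j}}[\varphi]\}_{j=1}^\infty$ is Cauchy. Writing out the definition of $\mathbb{F}_{X_{i_j}}$, this says that $\{\mathbb{\hat{E}}[\varphi(X_{i_j})]\}_{j=1}^\infty$ is Cauchy for each such $\varphi$.

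Finally I would pass from the Cauchy property to genuine convergence using the completeness of $\mathbb{R}$: since each $\{\mathbb{\hat{E}}[\varphi(X_{i_j})]\}_{j=1}^\infty$ is a Cauchy sequence of real numbers, it converges. By Definition \ref{converge in distribution}, the convergence of $\{\mathbb{\hat{E}}[\varphi(X_{i_j})]\}_{j=1}^\infty$ for every $\varphi\in C_{b,Lip}(\mathbb{R}^n)$ is exactly what it means for the subsequence $\{X_{i_j}\}_{j=1}^\infty$ to converge in distribution, which finishes the argument. I do not anticipate a substantive obstacle here, since the corollary is essentially a reformulation of Theorem \ref{tight theorem} in the language of random variables; the only two points deserving care are the (already noted) fact that each $\mathbb{F}_{X_i}$ qualifies as a sublinear expectation, so that the hypotheses of Theorem \ref{tight theorem} are met, and the completeness step that upgrades "Cauchy" to "convergent" and thereby matches the definition of convergence in distribution.
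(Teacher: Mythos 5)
Your proposal is correct and is exactly the argument the paper has in mind: the paper presents this corollary as an immediate consequence of Theorem \ref{tight theorem} and Definition \ref{converge in distribution}, and your three steps (identifying each $\mathbb{F}_{X_i}$ as a sublinear expectation on $(\mathbb{R}^n, C_{b,Lip}(\mathbb{R}^n))$, extracting the subsequence with the Cauchy property via Theorem \ref{tight theorem}, and upgrading Cauchy to convergent by completeness of $\mathbb{R}$ to match the definition of convergence in distribution) fill in precisely what the paper leaves implicit. No gap; nothing further is needed.
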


One can then easily obtain the following result concerning the convergence in
distribution of random variables.

\begin{proposition}
\label{independent copy converge in law}Let $\{X_{i}\}_{i=1}^{\infty}$ be a
sequence of $n$-dimensional random variables defined on sublinear expectation
spaces $(\Omega,\mathcal{H},\mathbb{\hat{E}})$ and $\bar{X}_{i}$ be an
independent copy of $X_{i}$ for $i\in \mathbb{N}$. If $\{X_{i}\}_{i=1}^{\infty
}$ converges in law to $X$ in $(\Omega,\mathcal{H},\mathbb{\tilde{E}})$,
i.e.,
\[
\lim_{i\rightarrow \infty}\mathbb{\hat{E}}[\varphi(X_{i})]=\mathbb{\tilde{E}
}[\varphi(X)],\text{ for }\varphi \in C_{b,Lip}(\mathbb{R}^{n}),
\]
which we denote by $X_{i}\overset{\mathcal{D}}{\rightarrow}X$. Then,
\[
\bar{X}_{i}\overset{\mathcal{D}}{\rightarrow}\bar{X}\text{\ and \ }X_{i}%
+\bar{X}_{i}\overset{\mathcal{D}}{\rightarrow}X+\bar{X},
\]
where $\bar{X}$ is an independent copy of $X$.
\end{proposition}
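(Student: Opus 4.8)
The plan is to handle the two convergences separately: the first is immediate from identical distribution, while the second requires a double-limit argument built on the independence identity.

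First I would dispose of $\bar{X}_{i}\overset{\mathcal{D}}{\rightarrow}\bar{X}$. Since $\bar{X}_{i}$ is an independent copy of $X_{i}$, it is in particular identically distributed to $X_{i}$, so $\mathbb{\hat{E}}[\varphi(\bar{X}_{i})]=\mathbb{\hat{E}}[\varphi(X_{i})]$ for every $\varphi\in C_{b,Lip}(\mathbb{R}^{n})$. Letting $i\rightarrow\infty$, the hypothesis $X_{i}\overset{\mathcal{D}}{\rightarrow}X$ together with $\bar{X}\overset{d}{=}X$ gives $\mathbb{\hat{E}}[\varphi(\bar{X}_{i})]\rightarrow\mathbb{\tilde{E}}[\varphi(X)]=\mathbb{\tilde{E}}[\varphi(\bar{X})]$, which is exactly $\bar{X}_{i}\overset{\mathcal{D}}{\rightarrow}\bar{X}$.

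For the sum I would fix $\psi\in C_{b,Lip}(\mathbb{R}^{n})$ with $\|\psi\|_{\infty}\leq M$ and Lipschitz constant $L$, and set $\varphi(x,y):=\psi(x+y)$, which lies in $C_{b,Lip}(\mathbb{R}^{n}\times\mathbb{R}^{n})$. Using $\bar{X}_{i}\perp\!\!\!\perp X_{i}$ and $\bar{X}_{i}\overset{d}{=}X_{i}$, the independence identity reduces the problem to $\mathbb{\hat{E}}[\psi(X_{i}+\bar{X}_{i})]=\mathbb{\hat{E}}[g_{i}(X_{i})]$ with $g_{i}(x):=\mathbb{\hat{E}}[\psi(x+X_{i})]$; the same computation on the limit space gives $\mathbb{\tilde{E}}[\psi(X+\bar{X})]=\mathbb{\tilde{E}}[g(X)]$ with $g(x):=\mathbb{\tilde{E}}[\psi(x+X)]$. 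The elementary estimate $|\mathbb{\hat{E}}[U]-\mathbb{\hat{E}}[V]|\leq\mathbb{\hat{E}}[|U-V|]$ and the Lipschitz property of $\psi$ show that every $g_{i}$ and $g$ are bounded by $M$ and $L$-Lipschitz, so in particular $g\in C_{b,Lip}(\mathbb{R}^{n})$. For each fixed $x$ one has $\psi(x+\cdot)\in C_{b,Lip}(\mathbb{R}^{n})$, so $X_{i}\overset{\mathcal{D}}{\rightarrow}X$ forces $g_{i}(x)\rightarrow g(x)$ pointwise, and the uniform Lipschitz bound (equicontinuity) upgrades this to uniform convergence on compact sets via Arzel\`{a}--Ascoli.

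It then remains to pass to the limit in $\mathbb{\hat{E}}[g_{i}(X_{i})]$, and here I would split $\mathbb{\hat{E}}[g_{i}(X_{i})]-\mathbb{\tilde{E}}[g(X)]$ as $(\mathbb{\hat{E}}[g_{i}(X_{i})]-\mathbb{\hat{E}}[g(X_{i})])+(\mathbb{\hat{E}}[g(X_{i})]-\mathbb{\tilde{E}}[g(X)])$. The second bracket vanishes because $g\in C_{b,Lip}(\mathbb{R}^{n})$ and $X_{i}\overset{\mathcal{D}}{\rightarrow}X$. The first bracket is at most $\mathbb{\hat{E}}[|g_{i}-g|(X_{i})]$, which I would control with a smooth cutoff $\chi_{R}\in C_{b,Lip}(\mathbb{R}^{n})$ obeying $\mathbbm{1}_{\{|x|\geq R+1\}}\leq\chi_{R}\leq\mathbbm{1}_{\{|x|\geq R\}}$: the bulk contribution is dominated by $\sup_{|x|\leq R+1}|g_{i}-g|\rightarrow0$, while the tail contribution is at most $2M\,\mathbb{\hat{E}}[\chi_{R}(X_{i})]$, whose limit superior in $i$ equals $2M\,\mathbb{\tilde{E}}[\chi_{R}(X)]$. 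The hard part will be this tail: forcing $\mathbb{\tilde{E}}[\chi_{R}(X)]\rightarrow0$ as $R\rightarrow\infty$ is precisely the tightness of the limiting law $\mathbb{F}_{X}$, which I would derive from the standing uniform integrability of $\{X_{i}\}$ (a uniform first-moment bound suffices). Granting tightness, choosing $R$ large makes the tail uniformly small for all large $i$, so the first bracket vanishes as well; since $\psi$ was arbitrary, this yields $X_{i}+\bar{X}_{i}\overset{\mathcal{D}}{\rightarrow}X+\bar{X}$, as claimed.
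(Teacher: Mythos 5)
Your route is the natural one, and in fact the paper offers no proof of this proposition at all (it is introduced with ``One can then easily obtain\ldots''), so the comparison can only be against the standard argument — which is essentially what you wrote. The first claim is correct as you argue it, and the core of the second is also correct: the independence identity $\mathbb{\hat{E}}[\psi(X_{i}+\bar{X}_{i})]=\mathbb{\hat{E}}[g_{i}(X_{i})]$ with $g_{i}(x)=\mathbb{\hat{E}}[\psi(x+X_{i})]$, the two-bracket decomposition, the equi-Lipschitz bounds giving $g\in C_{b,Lip}(\mathbb{R}^{n})$ and uniform convergence on compacts, and the cutoff splitting are all sound. The gap sits exactly at the step you yourself call ``the hard part'': you obtain tightness of the limit law from ``the standing uniform integrability of $\{X_{i}\}$ (a uniform first-moment bound suffices)''. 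There is no such standing hypothesis: the proposition assumes only that $\mathbb{\hat{E}}[\varphi(X_{i})]$ converges for every $\varphi\in C_{b,Lip}(\mathbb{R}^{n})$, and the paper's assumptions (A1)--(A3) concern the specific i.i.d. sequence of the main theorem, not the generic $\{X_{i}\}$ of this proposition.

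This missing input is not cosmetic, because at the level of laws on $C_{b,Lip}$ the statement is false without it. Take $n=1$ and $\mathbb{F}_{X_{i}}[\varphi]:=\max \{ \sup_{y\geq i}\varphi(y),\ \sup_{y\leq-i^{2}}\varphi(y)\}$; these are sublinear expectations converging, for every $\varphi \in C_{b,Lip}(\mathbb{R})$, to $\mathbb{F}_{X}[\varphi]:=\max \{ \limsup_{y\rightarrow+\infty}\varphi(y),\ \limsup_{y\rightarrow-\infty}\varphi(y)\}$, and one can realize $(X_{i},\bar{X}_{i})$ and $(X,\bar{X})$ by the usual iterated-expectation product construction. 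For the bump $\psi(y)=(1-|y|)^{+}$ one gets $\mathbb{\hat{E}}[\psi(X_{i}+\bar{X}_{i})]=1$ for every $i$ (the outer variable may sit at $y=i^{2}$, after which the inner variable, ranging over $\{w\leq y-i^{2}\}$, reaches $0$), whereas $\mathbb{\tilde{E}}[\psi(X+\bar{X})]=\mathbb{F}_{X}[0]=0$. Of course these $X_{i}$ have infinite first moments, which is precisely the point: the integrability/tightness needed must be justified from the framework or the context; it is not encoded in convergence of the laws. The repair is also lighter than what you propose, and your own estimate reveals it: since the tail term is $2M\lim_{i}\mathbb{\hat{E}}[\chi_{R}(X_{i})]=2M\mathbb{\tilde{E}}[\chi_{R}(X)]$, all you need is tightness of the \emph{limit} law $\mathbb{F}_{X}$, not uniform integrability of the sequence. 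Inside this paper that is available in two ways: (i) if, as in Peng's framework, the components of the limit random variable lie in $\mathcal{H}$, then $\mathbb{\tilde{E}}[\chi_{R}(X)]\leq \mathbb{\tilde{E}}[\,|X^{1}|+\cdots+|X^{n}|\,]/R\rightarrow0$; or (ii) in every application of the proposition (the proof of Theorem \ref{The construction of Levy process}), the laws $\mathbb{F}_{X_{i}}$ are dominated, in the sense of Definition \ref{def_2.8}, by the tight sublinear expectation $\mathbb{\hat{F}}$ of Lemma \ref{tight}, so that $\sup_{i}\mathbb{\hat{E}}[\chi_{R}(X_{i})]\leq\mathbb{\hat{F}}[\chi_{R}]$ is small for large $R$. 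Substituting either justification for the unstated uniform-integrability hypothesis makes your proof complete.
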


\begin{remark}
\label{ramark i.i.d.} The above results can be generalized to multiple
summations. For each $i$ and $m\in \mathbb{N}$, let $\{X_{i}^{n}\}_{n=1}^{m}$
be an independent copy sequence of $X_{i}$ in the sense that $X_{i}%
^{1}\overset{d}{=}X_{i}$, $X_{i}^{n+1}\overset{d}{=}X_{i}^{n}$ and
$X_{i}^{n+1}\perp \! \! \! \perp(X_{i}^{1},X_{i}^{2},\ldots,X_{i}^{n})$ for
$n=1,\ldots,m-1$, and let $\{X^{n}\}_{n=1}^{m}$ be an independent copy
sequence of $X$ in the sense that $X^{1}\overset{d}{=}X$, $X^{n+1}\overset
{d}{=}X^{n}$ and $X^{n+1}\perp \! \! \! \perp(X^{1},X^{2},\ldots,X^{n})$ for
$n=1,\ldots,m-1$. If $X_{i}\overset{\mathcal{D}}{\rightarrow}X$, then
\[
\sum_{n=1}^{m}X_{i}^{n}\overset{\mathcal{D}}{\rightarrow}\sum_{n=1}^{m}X^{n}.
\]

\end{remark}

\subsection{The robust central limit theorem for $G$-distribution}

One of the most important class of distributions in the sublinear expectation
framework is $G$-distribution, which characterizes volatility and mean
uncertainty via $(\xi,\eta)$ below.

\begin{definition}
The pair of $d$-dimensional random variables $(\xi,\eta)$ on a sublinear
expectation space $(\Omega,\mathcal{H},\mathbb{\hat{E})}$ is called
$G$-distributed if for each $a,b\geq0$ we have
\begin{equation}
\left(  a\xi+b\bar{\xi},a^{2}\eta+b^{2}\bar{\eta}\right)  \overset{d}%
{=}\left(  \sqrt{a^{2}+b^{2}}\xi,(a^{2}+b^{2})\eta \right)  ,\label{G}%
\end{equation}
where $(\bar{\xi},\bar{\eta})$ is an independent copy of $(\xi,\eta)$, and
$G:\mathbb{R}^{d}\times \mathbb{S}(d)\rightarrow \mathbb{R}$ denotes the binary
function
\begin{equation}
G\left(  p,A\right)  :=\mathbb{\hat{E}}\left[  \frac{1}{2}\left \langle
A\xi,\xi \right \rangle +\left \langle p,\eta \right \rangle \right]
,\ (p,A)\in \mathbb{R}^{d}\times \mathbb{S}(d), \label{G(p,A)}%
\end{equation}
where $\mathbb{S}(d)$ denotes the collection of all $\mathbb{R}^{d\times d}$
symmetric matrices.
\end{definition}

\begin{remark}
In fact, if the pair $(\xi,\eta)$ satisfies (\ref{G}), then
\[
a\xi+b\bar{\xi}\overset{d}{=}\sqrt{a^{2}+b^{2}}\xi,\text{ \ }a\eta+b\bar{\eta
}\overset{d}{=}(a+b)\eta, \text{ \ for }a,b\geq0.
\]
This implies that $\xi$ is $G$-normally distributed and $\eta$ is maximally
distributed (cf. Peng \cite{P2010}).
\end{remark}

\begin{remark}
\label{remark_G} For the latter use, we recall from Proposition 4.1 in Peng
\cite{P20082} that there exists a bounded and closed subset $\Gamma
\subset \mathbb{R}^{d}\times \mathbb{S}_{+}(d)$ such that for $(p,A)\in
\mathbb{R}^{d}\times \mathbb{S}(d)$,
\[
G(p,A)=\sup_{(q,Q)\in \Gamma}\left[  \langle p,q\rangle+\frac{1}{2}%
tr[AQ]\right]  ,
\]
where $\mathbb{S}_{+}(d)$ denotes the collection of nonnegative definite
elements in $\mathbb{S}(d)$.
\end{remark}

%The definition of $G$-distributed can be characterized by the parabolic PDE:
%\begin{proposition}
%[\cite{P2010}]Let $(\xi,\eta)$ satisfy (\ref{G}). For each $\phi \in
%C_{b,Lip}(\mathbb{R}^{d}\times \mathbb{R}^{d})$ we define%
%\[
%v(t,x,y):=\mathbb{\hat{E}}[\phi(x+\sqrt{t}\xi,y+t\eta)]\text{, }%
%(t,x,y)\in \lbrack0,\infty)\times \mathbb{R}^{d}\times \mathbb{R}^{d}.
%\]
%Then $v$ is the unique viscosity solution of the following parabolic
%PDE£º
%\[
%\left \{
%\begin{array}
%[c]{l}%
%\partial_{t}v-G(D_{y}v,D_{x}^{2}v)=0,\text{ }(t,x,y)\in(0,\infty
%)\times \mathbb{R}^{d}\times \mathbb{R}^{d},\\
%v(0,x,y)=\phi(x,y),\text{ \ }(x,y)\in \mathbb{R}^{d}\times \mathbb{R}^{d},
%\end{array}
%\right.
%\]
%where $G:\mathbb{R}^{d}\times \mathbb{S}(d)\rightarrow \mathbb{R}$ is defined by
%(\ref{G(p,A)}).
%\end{proposition}

The robust central limit theorem for $G$-distribution was established by Peng
around 2008 using the regularity theory of fully nonlinear PDEs in
\cite{P20082}. A probabilistic proof using the weak convergence argument was
subsequently established in \cite{P2010_CLT}. We recall this major theorem in
its general form as in \cite[Theorem 2.4.7]{P2010} (see also
\cite{C2016,GL2021,Z2016} for more related research).

\begin{theorem}
[\cite{P2010}]\label{LLT+CLT}Let $\{(X_{i},Y_{i})\}_{i=1}^{\infty}$ be a
sequence of $\mathbb{R}^{2d}$-valued random variables on a sublinear
expectation space $(\Omega,\mathcal{H},\mathbb{\hat{E}})$. We assume that
$(X_{i+1},Y_{i+1})\overset{d}{=}(X_{i},Y_{i})$ and $(X_{i+1},Y_{i+1})$ is
independent from $\{(X_{1},Y_{1}),\ldots,(X_{i},Y_{i})\}$ for each
$i=1,2,\cdots$. We further assume that $\mathbb{\hat{E}}[X_{1}]=\mathbb{\hat
{E}}[-X_{1}]=0$ and
\[
\lim \limits_{\gamma \rightarrow+\infty}\mathbb{\hat{E}}[(|X_{1}|^{2}-
\gamma)^{+}]=0,\text{ \ }\lim \limits_{\gamma \rightarrow+\infty}\mathbb{\hat
{E}}[(|Y_{1}|-\gamma)^{+}]=0.
\]
Then for each function $\varphi \in C(\mathbb{R}^{2d})$ satisfying linear
growth condition, we have
\[
\lim_{n\rightarrow \infty}\mathbb{\hat{E}}\bigg[\varphi \bigg(\sum_{i=1}^{n}
\frac{X_{i}}{\sqrt{n}},\sum_{i=1}^{n}\frac{Y_{i}}{n}\bigg)\bigg]=\mathbb{\hat
{E}}\big[\varphi(\xi,\eta)\big],
\]
where the pair $(\xi,\eta)$ is $G$-distributed and the corresponding sublinear
function $G:$ $\mathbb{R}^{d}\times \mathbb{S}(d)$ $\rightarrow \mathbb{R}$ is
defined by
\[
G\left(  p,A\right)  :=\mathbb{\hat{E}}\left[  \frac{1}{2}\left \langle
AX_{1},X_{1}\right \rangle +\left \langle p,Y_{1}\right \rangle \right]
,\ (p,A)\in \mathbb{R}^{d}\times \mathbb{S}(d).
\]

\end{theorem}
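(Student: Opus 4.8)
The plan is to give a purely probabilistic proof through the weak convergence machinery of Section~2, following Peng \cite{P2010_CLT}, thereby avoiding the regularity theory of the $G$-heat equation. Set $S_{n}:=\big(\frac{1}{\sqrt{n}}\sum_{i=1}^{n}X_{i},\frac{1}{n}\sum_{i=1}^{n}Y_{i}\big)$ and let $\mathbb{F}_{S_{n}}$ be the induced sublinear expectation on $(\mathbb{R}^{2d},C_{b,Lip}(\mathbb{R}^{2d}))$. First I would prove that $\{\mathbb{F}_{S_{n}}\}_{n\geq1}$ is tight. The uniform integrability hypotheses ensure $\hat{E}[|X_{1}|^{2}]<\infty$ and $\hat{E}[|Y_{1}|]<\infty$. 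Using the independence structure together with $\hat{E}[\langle p,X_{1}\rangle]=\hat{E}[-\langle p,X_{1}\rangle]=0$, the cross terms in $|\sum_{i=1}^{n}X_{i}|^{2}$ drop out by Proposition~\ref{Prop^E Plimi}(i) and the independence identity, giving the recursion $\hat{E}[|\sum_{i=1}^{n}X_{i}|^{2}]\leq\hat{E}[|\sum_{i=1}^{n-1}X_{i}|^{2}]+\hat{E}[|X_{1}|^{2}]$, hence the uniform bounds
\[
\hat{E}\Big[\big|\tfrac{1}{\sqrt{n}}\textstyle\sum_{i=1}^{n}X_{i}\big|^{2}\Big]\leq\hat{E}[|X_{1}|^{2}],\qquad \hat{E}\Big[\big|\tfrac{1}{n}\textstyle\sum_{i=1}^{n}Y_{i}\big|\Big]\leq\hat{E}[|Y_{1}|].
\]
A Chebyshev-type argument then yields, for each $\varepsilon>0$, a radius $N$ and a function $\varphi\in C_{b,Lip}$ dominating $\mathbf{1}_{\{|\cdot|\geq N\}}$ with $\sup_{n}\mathbb{F}_{S_{n}}[\varphi]<\varepsilon$, which is tightness in the sense of Definitions~\ref{def_2.7}--\ref{def_2.8}.

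By Theorem~\ref{tight theorem} and Corollary~\ref{corollary tight theorem}, tightness gives weak compactness, so along some subsequence $S_{n_{k}}\overset{\mathcal{D}}{\to}(\xi,\eta)$ on a sublinear expectation space $(\tilde{\Omega},\tilde{\mathcal{H}},\tilde{E})$. The heart of the argument --- and the step I expect to be the main obstacle --- is to \emph{identify} this limit, i.e.\ to show that every subsequential limit $(\xi,\eta)$ satisfies the defining relation (\ref{G}). I would exploit the self-similarity of the normalized sums by a block decomposition. Splitting $\sum_{i=1}^{N}$ into the first $m$ and the last $N-m$ summands,
\[
\frac{1}{\sqrt{N}}\sum_{i=1}^{N}X_{i}=\sqrt{\tfrac{m}{N}}\,\frac{1}{\sqrt{m}}\sum_{i=1}^{m}X_{i}+\sqrt{\tfrac{N-m}{N}}\,\frac{1}{\sqrt{N-m}}\sum_{i=m+1}^{N}X_{i},
\]
\[
\frac{1}{N}\sum_{i=1}^{N}Y_{i}=\tfrac{m}{N}\,\frac{1}{m}\sum_{i=1}^{m}Y_{i}+\tfrac{N-m}{N}\,\frac{1}{N-m}\sum_{i=m+1}^{N}Y_{i}.
\]
Since the second block uses strictly later indices, it is independent from the first, so by Proposition~\ref{independent copy converge in law} and Remark~\ref{ramark i.i.d.} the two blocks converge jointly to $(\xi,\eta)$ and to an independent copy $(\bar{\xi},\bar{\eta})$. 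Letting $N\to\infty$ with $m/N\to a^{2}$ (and $a^{2}+b^{2}=1$) gives $(a\xi+b\bar{\xi},a^{2}\eta+b^{2}\bar{\eta})\overset{d}{=}(\xi,\eta)$; a density argument in $(a,b)$, together with the scale covariance obtained by applying the map $(u,v)\mapsto(cu,c^{2}v)$ with $c=\sqrt{a^{2}+b^{2}}$, upgrades this to the full relation (\ref{G}). The delicate points are the two-parameter joint convergence of the blocks and checking that the limiting independence matches the one required of a $G$-distribution.

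It remains to identify the generating function and conclude. Testing against $\varphi(x,y)=\frac{1}{2}\langle Ax,x\rangle+\langle p,y\rangle$, positive homogeneity of the $G$-distribution gives $\tilde{E}[\varphi(\sqrt{t}\,\xi,t\eta)]=t\,G(p,A)$ with $G(p,A)=\tilde{E}[\frac{1}{2}\langle A\xi,\xi\rangle+\langle p,\eta\rangle]$, which is (\ref{G(p,A)}); evaluating the same quadratic--linear test function on $S_{n}$ and discarding the mean-zero off-diagonal terms via independence, a law-of-large-numbers argument shows $\mathbb{F}_{S_{n}}[\varphi]\to\hat{E}[\frac{1}{2}\langle AX_{1},X_{1}\rangle+\langle p,Y_{1}\rangle]$, so $G$ equals the stated formula. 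Because a $G$-distribution is uniquely determined by $G$ (equivalently, the $G$-heat PDE (\ref{0.1}) has a unique viscosity solution), all subsequential limits coincide and the full sequence $\{\mathbb{F}_{S_{n}}\}$ converges. Finally, to pass from $\varphi\in C_{b,Lip}$ to an arbitrary $\varphi\in C(\mathbb{R}^{2d})$ of linear growth, I would truncate $\varphi$ and control the tails with the uniform moment bounds obtained in the tightness step, which completes the proof.
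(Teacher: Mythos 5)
First, a point of comparison: the paper does not prove Theorem \ref{LLT+CLT} at all; it quotes it from Peng \cite{P2010} (the weak-convergence proof being in \cite{P2010_CLT}), and its own Sections 4.1--4.4 generalize that proof to the jump case rather than reproduce it. Within your proposal, the tightness step is sound and is essentially the paper's Lemma \ref{tight}: independence, the two-sided mean-zero assumption and Proposition \ref{Prop^E Plimi}(i) kill the cross terms, giving $\mathbb{\hat{E}}[|S_n^1|^2]\leq n\mathbb{\hat{E}}[|X_1|^2]$, and a Chebyshev-type bound with hat functions yields tightness; Theorem \ref{tight theorem} and Corollary \ref{corollary tight theorem} then produce subsequential limits.

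The identification step, however, contains a genuine gap, and it is exactly the step you flagged as delicate. When you split $S_N$ (with $N=n_k$ running through the convergent subsequence) into blocks of sizes $m\approx a^2n_k$ and $n_k-m$, the block indices do not belong to $\{n_k\}$, so nothing guarantees that $\frac{1}{\sqrt{m}}\sum_{i=1}^m X_i$ converges in law to $\xi$. Passing to a further subsequence (legitimate, by tightness) makes the blocks converge, but to an a priori different cluster point $(\xi',\eta')$; what you then obtain is $(\xi,\eta)\overset{d}{=}(a\xi'+b\bar{\xi}',a^2\eta'+b^2\bar{\eta}')$, which is not relation (\ref{G}). Concluding $(\xi',\eta')\overset{d}{=}(\xi,\eta)$ amounts to knowing that all cluster points coincide --- precisely the statement being proved --- so the argument is circular. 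This is not a removable technicality: no purely distributional block argument is known to close it, which is why every existing proof (Peng's regularity proof \cite{P20082}, the weak-convergence proof \cite{P2010_CLT}, and this paper's generalization via Theorems \ref{The construction of Levy process} and \ref{unique viscosity theorem}) instead builds from an arbitrary cluster point, by iterated dyadic subsequence extraction, a process with stationary independent increments, shows that its value function $u(t,x,y)=\mathbb{\tilde{E}}[\varphi(x+\xi_t,y+\eta_t)]$ is a viscosity solution of $\partial_t u-G(D_yu,D_x^2u)=0$ with $G$ computed directly from the data $(X_1,Y_1)$ (so the equation is the same for every cluster point), and invokes uniqueness of viscosity solutions to force all cluster points to agree. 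Your closing appeal to uniqueness of the $G$-distribution is correct in itself but comes one step too late: uniqueness must be what identifies the cluster points, not a check performed after a direct identification that fails. A secondary, fixable issue: to compute $G$ you test the limit against $\frac{1}{2}\langle Ax,x\rangle+\langle p,y\rangle$, which is neither bounded nor of linear growth in $x$; exchanging this test function with the limit in distribution requires $\lim_{\gamma\rightarrow\infty}\sup_n\mathbb{\hat{E}}[(|S_n^1|^2/n-\gamma)^{+}]=0$, which needs a separate argument beyond the truncation remark you make for linear-growth $\varphi$.
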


%\textcolor[rgb]{1,0,0}{In contrast to \cite{HL2020} and \cite{Krylov2020}, where the third moment
%condition on $X_i$ is imposed, it seems impossible to obtain the convergence
%rate under the above moment condition. Moreover, it seems the monotone
%scheme method proposed in \cite{HL2020} and stochastic control method
%proposed in \cite{Krylov2020} do not apply.}

%\begin{definition}
%Let $G:\mathbb{R}^{d}\times \mathbb{S(}d\mathbb{)}\rightarrow \mathbb{R}$ be a
%given sublinear function monotonic in $A\in \mathbb{S(}d\mathbb{)}$. A
%$d$-dimensional process $(B_{t})_{t\geq0}$ on a sublinear expectation space is
%called a generalized $G$-Brownian motion if the following properties hold:
%\begin{description}
%\item[(i)] $B_{0}=0$;
%\item[(ii)] For each $0\leq s\leq t$, $B_{t+s}-B_{t}\overset{d}{=}\sqrt{s}%
%\xi+s\eta$, where $\left(  \xi,\eta \right)  $ is $G$-distributed;
%\item[(iii)] For each $0\leq s\leq t$, $B_{t+s}-B_{t}$ is independent from
%$(B_{t_{1}},\ldots,B_{t_{n}})$ for each $n\in \mathbb{N}$ and $0\leq t_{1}%
%\leq \cdots \leq t_{n}\leq t$.
%\end{description}
%\end{definition}

\subsection{The $\alpha$-stable limit theorem for $\alpha$-stable
distribution}

Let us first recall the classical $\alpha$-stable limit theorem in the linear
case. let $\{Z_{i}\}_{i=1}^{\infty}$ be a sequence of i.i.d. random variables
on a classical probability space. While the central limit theorem states that
the distribution of $n^{-1/2}\sum_{i=1}^{n}Z_{i}$ will converge to a normal
distribution when $Z_{1}$ has finite variance, the $\alpha$-stable limit
theorem states that the distribution of $n^{-1/\alpha}\sum_{i=1}^{n}Z_{i}$
with will converge to a classical $\alpha$-stable distribution for $\alpha
\in(0,2)$ when $Z_{1}$ has power-law tails decreasing as ${|x|^{-\alpha-1}}$
(see (\ref{normal attraction of stable law})).

\begin{definition}
The common distribution $F_{Z}$ of an i.i.d. sequence $\{Z_{i}\}_{i=1}%
^{\infty}$ is said in the domain of normal attraction of an $\alpha$-stable
distribution $F$ with $\alpha \in(0,2)$, if there exist nonnegative constants
$a_{n}$ and $b_{n}=bn^{\frac{1}{\alpha}}$ for $n=1,2,\cdots$, such that the
distribution of
\[
\frac{1}{b_{n}}\bigg(\sum_{i=1}^{n}Z_{i}-a_{n}\bigg)
\]
weakly converges to $F$ as $n\rightarrow \infty$.

%Moreover, $F_{Z}$ is said
%in the domain of normal attraction of an $\alpha $-stable distribution $G$
%with $\alpha \in (0,2)$, if
%\begin{equation*}
%b_{n}=bn^{\frac{1}{\alpha }},\text{\ \ for some }b>0.
%\end{equation*}

\end{definition}

The following theorem characterizes the domain of normal attraction of an
$\alpha$-stable distribution via a characterization condition (see
(\ref{normal attraction of stable law}) below). It can be found in Ibragimov
and Linnik \cite[Theorem 2.6.7]{IL1971}.

\begin{theorem}
[\cite{IL1971}]\label{classical_condition} The distribution $F_{Z}$ belongs to
the domain of normal attraction of an $\alpha$-stable distribution $F$ for
$\alpha \in(0,2)$ if and only if
\begin{equation}
F_{Z}(z)=\left \{
\begin{array}
[c]{ll}%
\displaystyle[c_{1}b^{\alpha}+\beta_{1}(z)]\frac{1}{|z|^{\alpha}}, & z<0,\\
\displaystyle1-[c_{2}b^{\alpha}+\beta_{2}(z)]\frac{1}{z^{\alpha}}, & z>0,
\end{array}
\right.  \label{normal attraction of stable law}%
\end{equation}
where $c_{1}$ and $c_{2}$ are constants with $c_{1},c_{2}\geq0$, $c_{1}%
+c_{2}>0$ related to the $\alpha$-stable distribution, and some functions
$\beta_{1}$ and $\beta_{2}$ satisfying
\[
\lim_{z\rightarrow-\infty}\beta_{1}(z)=\lim_{z\rightarrow \infty}\beta
_{2}(z)=0.
\]

\end{theorem}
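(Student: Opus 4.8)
The plan is to reduce the statement to the classical convergence criterion for row sums of a triangular array to an infinitely divisible law, and to identify the limiting $\alpha$-stable law through its L\'evy measure. Writing $\phi$ for the characteristic function of $Z_1$, the continuity theorem shows that weak convergence of $b_n^{-1}\big(\sum_{i=1}^n Z_i - a_n\big)$ to $F$ is equivalent to $e^{-ita_n/b_n}\phi(t/b_n)^n \to \psi(t)$ uniformly on compacts, where $\psi$ is the characteristic function of $F$. Since $F$ is $\alpha$-stable, its L\'evy--Khintchine exponent reads
\[
\log\psi(t)=i\gamma t-\int_{\mathbb{R}}\big(e^{itx}-1-itx\mathbf{1}_{\{|x|\le1\}}\big)\,\nu(dx),
\]
with $\nu(dx)=\big(c_1\alpha\,\mathbf{1}_{(-\infty,0)}(x)|x|^{-\alpha-1}+c_2\alpha\,\mathbf{1}_{(0,\infty)}(x)x^{-\alpha-1}\big)dx$, the constants $c_1,c_2$ being exactly those in the statement. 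First I would invoke the classical i.i.d.\ convergence criterion (Gnedenko--Kolmogorov): for the array $\{Z_i/b_n\}$, convergence to $F$ holds if and only if the tail limits $n[1-F_Z(b_nx)]\to\nu((x,\infty))=c_2x^{-\alpha}$ and $nF_Z(-b_nx)\to\nu((-\infty,-x))=c_1x^{-\alpha}$ hold for all $x>0$, together with the vanishing truncated-variance balance $\lim_{\varepsilon\to0}\limsup_n \tfrac{n}{b_n^2}\int_{|y|\le\varepsilon b_n}y^2\,dF_Z(y)=0$ and a matching choice of centering $a_n$.

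For sufficiency I would substitute the hypothesised tails into these conditions. Using $b_n=bn^{1/\alpha}$ and $1-F_Z(y)=[c_2b^\alpha+\beta_2(y)]y^{-\alpha}$,
\[
n[1-F_Z(b_nx)]=n\,[c_2b^\alpha+\beta_2(b_nx)]\,(bn^{1/\alpha}x)^{-\alpha}\longrightarrow c_2x^{-\alpha},
\]
since $\beta_2(b_nx)\to0$, and the left tail is identical with $c_1$. The truncated-variance term equals $\tfrac{\alpha c_2}{2-\alpha}\varepsilon^{2-\alpha}+o(1)=O(\varepsilon^{2-\alpha})$ because $\alpha<2$, hence vanishes as $\varepsilon\to0$ uniformly in $n$. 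Feeding these into the criterion yields $\phi(t/b_n)^n\to\psi$ up to a deterministic linear phase absorbed by $a_n$, the explicit stable constant emerging from the standard integral $\int_0^\infty(1-\cos u)u^{-\alpha-1}\,du$.

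For necessity I would run the argument in reverse: convergence of $b_n^{-1}(\sum Z_i-a_n)$ to a law that (because $b_n=bn^{1/\alpha}$) is necessarily $\alpha$-stable forces the criterion, hence $n[1-F_Z(b_nx)]\to c_2x^{-\alpha}$ and the analogous left-tail limit. The remaining task is to upgrade convergence along the discrete scales $b_n$ to the continuous asymptotic, i.e.\ to set $\beta_2(z):=z^\alpha[1-F_Z(z)]-c_2b^\alpha$ and prove $\beta_2(z)\to0$ as $z\to\infty$; I expect this interpolation to be the main obstacle. The key is that $b_n=bn^{1/\alpha}$ is regularly varying with $b_{n+1}/b_n\to1$, so any large $z$ lies between consecutive values $b_nx$ and $b_nx(1+\delta)$, and monotonicity of $F_Z$ sandwiched between the tail limits at $x$ and $x(1+\delta)$ pins down $\lim_{z\to\infty}z^\alpha[1-F_Z(z)]=c_2b^\alpha$. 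The hypothesis $c_1+c_2>0$ excludes the degenerate non-stable limit, while for $\alpha\le1$ the centering $a_n$ (and a principal-value reading of the drift) needs extra care, though only $\alpha\in(1,2)$ is used subsequently.
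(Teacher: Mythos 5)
The paper does not prove this theorem at all: it is quoted verbatim as a classical result with a citation to Ibragimov and Linnik \cite{IL1971} (their Theorem 2.6.7), so there is no in-paper proof to compare against. Your reconstruction is essentially the standard modern proof and is sound in its main steps. You reduce the statement to the Gnedenko--Kolmogorov criterion for convergence of row sums of a null triangular array to an infinitely divisible law: the sufficiency computation $n[1-F_Z(b_nx)]=[c_2b^\alpha+\beta_2(b_nx)]b^{-\alpha}x^{-\alpha}\to c_2x^{-\alpha}$ is correct, the truncated-variance estimate of order $\varepsilon^{2-\alpha}$ is correct (and the contribution from a fixed neighborhood of the origin is killed by $n/b_n^2=O(n^{1-2/\alpha})\to 0$), and, most importantly, you correctly identify and correctly execute the one nontrivial step in necessity: upgrading the tail limits, which the criterion gives only along the discrete scales $b_n$, to the continuous limit $z\to\infty$, via monotonicity of $F_Z$ sandwiched between consecutive scales together with $b_{n+1}/b_n\to1$. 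This differs in flavor from Ibragimov--Linnik's own argument, which works through characteristic-function expansions and regular variation, but both routes are classical and yield the same statement. Two points in your write-up are asserted rather than carried out: (a) the matching of the drift/centering, i.e.\ that after the tail and variance conditions hold one can choose $a_n$ so the limit is exactly $F$ rather than a translate of it (routine: a shift $a_n\mapsto a_n+cb_n$ fixes any constant discrepancy); and (b) the case $\alpha\le 1$, in particular the principal-value centering at $\alpha=1$, which you explicitly defer. Since the theorem as stated covers all $\alpha\in(0,2)$, a complete proof would have to address (b), but this affects only the centering bookkeeping, not the tail characterization that is the substance of the result, and the paper itself only ever uses $\alpha\in(1,2)$.
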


%For recent development of the above classical $\alpha$-stable limit theorem for
%$\alpha$-stable distribution, we refer to \cite{CW1992,GK1954,H1981,HL2015,UZ1999}
%and the references therein for more research in this field.

%\begin{theorem}
%Let $\{Z_{i}\}_{i=1}^{\infty }$ be an i.i.d. sequence with distribution function $F_{Z}$
%satisfying (\ref{normal attraction of stable law}) with $\alpha\in(0,2)$.
%Then, there exist
%nonnegative constants $a_{n}$ and $b_{n}=bn^{\frac{1}{\alpha }}$,
%$n=1,2,\cdots$, such that the distribution of the centered and
%normalized sum
%\begin{equation*}
%\frac{1}{b_{n}}\bigg(\sum_{i=1}^{n}Z_{i}-a_{n}\bigg)
%\end{equation*}
%weakly converges to the $\alpha$-stable distribution as $n\rightarrow \infty
%$.
%\end{theorem}

In the following, we will present the nonlinear version of the $\alpha$-stable
limit theorem. Let us start by recalling the definition of $\alpha$-stable
distribution under sublinear expectations.

\begin{definition}
Let $\alpha \in(1,2)$. A random variable $\zeta$ on a sublinear expectation
space $(\Omega,\mathcal{H},\mathbb{\hat{E})}$ is said to be (strictly)
$\alpha$-stable if for all $a,b\geq0$,
\[
a\zeta+b\bar{\zeta}\overset{d}{=}(a^{\alpha}+b^{\alpha})^{1/\alpha}\zeta,
\]
where $\bar{\zeta}$ is an independent copy of $\zeta$.
\end{definition}

By analogy with the classical case, a nonlinear $\alpha$-stable random
variable $\zeta$ can be characterized by a set of L\'{e}vy triplets (see, for
example, Neufeld and Nutz \cite{NN2017}). For $\alpha \in(1,2)$, we consider
$\zeta$
%for a one-dimensional nonlinear
%$\alpha$-stable process $(\zeta_{t})_{t\geq0}$
on the sublinear expectation space $(\Omega,\mathcal{H},\mathbb{\tilde{E}})$
whose characteristics are described by a set of L\'{e}vy triplets $\Theta
_{0}=\{(F_{k_{\pm}},0,0):k_{\pm}\in K_{\pm}\}$, where $K_{\pm}\subset
(\lambda_{1},\lambda_{2})$ for some $\lambda_{1},\lambda_{2}>0$ and
$F_{k_{\pm}}(dz)$ is the $\alpha$-stable L\'{e}vy measure
\[
F_{k_{\pm}}(dz)=\frac{k_{-}}{|z|^{\alpha+1}}\mathbf{1}_{(-\infty
,0)}(z)dz+\frac{k_{+}}{|z|^{\alpha+1}}\mathbf{1}_{(0,\infty)}(z)dz.
\]

The corresponding nonlinear $\alpha$-stable limit theorem was first
established by Bayraktar and Munk \cite[Theorem 3.1]{BM2016}.
%For $\alpha \in(1,2)$, they considered a nonlinear
%$\alpha$-stable distribution $\zeta$ on a sublinear expectation space with an
%uncertainty set $\Theta_{0}=\{(F_{k_{\pm}},0,0):k_{\pm}\in K_{\pm}\}$, where
%$K_{\pm}\subset(\lambda_{1},\lambda_{2})$ for some $\lambda_{1},\lambda
%_{2}\geq0$ and $F_{k_{\pm}}(dz)$ is the $\alpha$-stable L\'{e}vy measure
%\[
%F_{k_{\pm}}(dz)=\frac{k_{-}}{|z|^{\alpha+1}}\mathbf{1}_{(-\infty
%,0)}(z)dz+\frac{k_{+}}{|z|^{\alpha+1}}\mathbf{1}_{(0,+\infty)}(z)dz\text{.}%
%\]
Since cumulative distribution functions do not exist in the sublinear
framework, they further replace the characterization condition
(\ref{normal attraction of stable law}) by a consistency condition (see (ii) below).

\begin{theorem}
[\cite{BM2016}]\label{Theorem_Erhan} Let $\{Z_{i}\}_{i=1}^{\infty}$ be an
i.i.d. sequence of real-valued random variables on a sublinear expectation
space $(\Omega,\mathcal{H},\mathbb{\hat{E})}$ in the sense that $Z_{i+1}%
\overset{d}{=}Z_{i}$ and $Z_{i+1}\perp \! \! \! \perp(Z_{1},Z_{2},\ldots
,Z_{i})$ for each $i\in \mathbb{N}$, and $b_{n}=bn^{\frac{1}{\alpha}}$, for
some $b>0$. Suppose that

\begin{description}
\item[(i)] $\mathbb{\hat{E}}[Z_{1}]=\mathbb{\hat{E}}[-Z_{1}]=0$ and
$\mathbb{\hat{E}}[|Z_{1}|]<\infty$;

\item[(ii)] for any $0<h<1$ and $\varphi \in C_{b,Lip}(\mathbb{R})$,
\[
n\bigg \vert \mathbb{\hat{E}}\big[\delta_{b_{n}^{-1}Z_{1}}v(t,x)\big]-\frac
{1}{n}\sup \limits_{k_{\pm}\in K_{\pm}}\bigg \{ \int_{\mathbb{R}}\delta
_{z}v(t,x)F_{k_{\pm}}(dz)\bigg \} \bigg \vert \rightarrow0,\  \ n\rightarrow
\infty,
\]
uniformly on $[0,1]\times \mathbb{R}$, where $v$ is the unique viscosity
solution of
\begin{equation}
\left \{
\begin{array}
[c]{ll}%
\displaystyle \partial_{t}v(t,x)+\sup \limits_{k_{\pm}\in K_{\pm}}\left \{
\int_{\mathbb{R}}\delta_{z}v(t,x)F_{k_{\pm}}(dz)\right \}  =0, & (-h,1+h)\times
\mathbb{R},\\
\displaystyle v(1+h,x)=\varphi(x),\text{ \ }x\in \mathbb{R}, &
\end{array}
\right.  \label{PIDE_Erhan}%
\end{equation}
with $\delta_{z}v(t,x):=v(t,x+z)-v(t,x)-D_{x}v(t,x)z$.
\end{description}

Then
\[
\frac{1}{b_{n}}\sum_{i=1}^{n}Z_{i}\overset{\mathcal{D}}{\rightarrow}%
\zeta,\  \  \  \  \text{as}\ n\rightarrow \infty.
\]
%for any $\phi \in C_{b,Lip}(\mathbb{R})$
%\begin{equation*}
%\lim_{n\rightarrow \infty }\mathbb{\hat{E}}\bigg[\phi \bigg(\frac{1}{b_{n}}%
%\sum_{i=1}^{n}Z_{i}\bigg)\bigg]=\mathbb{\tilde{E}}[\phi (\zeta )].
%\end{equation*}

\end{theorem}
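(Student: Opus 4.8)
The plan is to realize both sides through the viscosity solution $v$ of (\ref{PIDE_Erhan}), using the probabilistic representation $v(t,x)=\mathbb{\tilde{E}}[\varphi(x+\zeta_{1+h-t})]$ of the nonlinear $\alpha$-stable process, so that $v(0,0)=\mathbb{\tilde{E}}[\varphi(\zeta_{1+h})]$ and $v(1,x)=\mathbb{\tilde{E}}[\varphi(x+\zeta_{h})]$. Writing $S_{k}=\sum_{i=1}^{k}Z_{i}$ and $t_{k}=k/n$, I would compare $\mathbb{\hat{E}}[\varphi(b_{n}^{-1}S_{n})]$ with $\mathbb{\tilde{E}}[\varphi(\zeta)]$ by first showing, for each fixed $h\in(0,1)$, the interpolation $\mathbb{\hat{E}}[v(1,b_{n}^{-1}S_{n})]\to v(0,0)$ as $n\to\infty$, and then removing the auxiliary parameter $h$ by an $\varepsilon/3$ argument. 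The role of the buffer $h>0$ is that all time points $t_{k}\in[0,1]$ then stay at distance at least $h$ from the terminal time $1+h$, where $\varphi$ is merely Lipschitz; on this interior strip the smoothing of the order-$\alpha$ nonlocal operator is what delivers the regularity of $v$ on which the whole argument rests.

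For the interpolation I would telescope the real sequence $a_{k}:=\mathbb{\hat{E}}[v(t_{k},b_{n}^{-1}S_{k})]$, so that $\mathbb{\hat{E}}[v(1,b_{n}^{-1}S_{n})]-v(0,0)=\sum_{k=0}^{n-1}\Delta_{k}$ with $\Delta_{k}=a_{k+1}-a_{k}$. Using that $Z_{k+1}\perp\!\!\!\perp(Z_{1},\dots,Z_{k})$ and $Z_{k+1}\overset{d}{=}Z_{1}$, the independence identity gives $a_{k+1}=\mathbb{\hat{E}}[g_{k}(b_{n}^{-1}S_{k})]$ with $g_{k}(y):=\mathbb{\hat{E}}[v(t_{k+1},y+b_{n}^{-1}Z_{1})]$. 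Expanding through the $\delta$-operator, $v(t_{k+1},y+b_{n}^{-1}Z_{1})=v(t_{k+1},y)+D_{x}v(t_{k+1},y)b_{n}^{-1}Z_{1}+\delta_{b_{n}^{-1}Z_{1}}v(t_{k+1},y)$; since condition (i) forces $\mathbb{\hat{E}}[cZ_{1}]=0$ for every constant $c$ (by positive homogeneity together with $\mathbb{\hat{E}}[Z_{1}]=\mathbb{\hat{E}}[-Z_{1}]=0$), Proposition \ref{Prop^E Plimi}(i) lets me discard the linear term and obtain $g_{k}(y)=v(t_{k+1},y)+\mathbb{\hat{E}}[\delta_{b_{n}^{-1}Z_{1}}v(t_{k+1},y)]$.

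The heart of the estimate then combines three ingredients. Condition (ii) replaces $\mathbb{\hat{E}}[\delta_{b_{n}^{-1}Z_{1}}v(t_{k+1},y)]$ by $\tfrac{1}{n}\sup_{k_{\pm}}\int\delta_{z}v(t_{k+1},y)F_{k_{\pm}}(dz)$ up to a term that is $o(1/n)$ uniformly in $(t,y)$; the PIDE (\ref{PIDE_Erhan}) rewrites that supremum as $-\partial_{t}v(t_{k+1},y)$; and a Taylor expansion in time, legitimate by the regularity of $v$ on the interior strip, gives $v(t_{k},y)=v(t_{k+1},y)-\tfrac{1}{n}\partial_{t}v(t_{k+1},y)+o(1/n)$ uniformly. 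Subtracting, $g_{k}(y)-v(t_{k},y)=o(1/n)$ uniformly in $y$, so by Proposition \ref{Prop^E Plimi}(ii), $|\Delta_{k}|\le\mathbb{\hat{E}}[|g_{k}(b_{n}^{-1}S_{k})-v(t_{k},b_{n}^{-1}S_{k})|]=o(1/n)$ uniformly in $k$. Summing the $n$ terms yields $\mathbb{\hat{E}}[v(1,b_{n}^{-1}S_{n})]\to v(0,0)$. Finally, since $\varphi$ is Lipschitz and $\zeta_{t}\overset{d}{=}t^{1/\alpha}\zeta_{1}$ with $\mathbb{\tilde{E}}[|\zeta_{1}|]<\infty$ (as $\alpha>1$), both $\sup_{x}|\varphi(x)-v(1,x)|$ and $|v(0,0)-\mathbb{\tilde{E}}[\varphi(\zeta)]|$ are bounded by a constant times $h^{1/\alpha}\mathbb{\tilde{E}}[|\zeta_{1}|]$, which vanishes as $h\to0$; combining this with the interpolation through Proposition \ref{Prop^E Plimi}(ii) gives $\mathbb{\hat{E}}[\varphi(b_{n}^{-1}S_{n})]\to\mathbb{\tilde{E}}[\varphi(\zeta)]$.

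The main obstacle is the regularity of $v$: I need $v(t,\cdot)\in C^{1}$ with the nonlocal integral $\int\delta_{z}v\,F_{k_{\pm}}(dz)$ absolutely convergent despite the $|z|^{-\alpha-1}$ singularity, and enough time-regularity of $\partial_{t}v$ for the Taylor remainder to sum to $o(1)$, all uniformly on $[0,1]\times\mathbb{R}$. This is precisely what the interior-in-time regularity theory for fully nonlinear PIDEs supplies once the terminal singularity is held at distance $h$, and it is also the reason the auxiliary parameter $h$ cannot be dispensed with before taking the final limit.
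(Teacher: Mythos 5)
Your proposal is correct and follows essentially the same route as the original proof in \cite{BM2016}, which this paper quotes without reproducing: a Peng-style telescoping of $\mathbb{\hat{E}}[v(t_{k},b_{n}^{-1}S_{k})]$ along the time grid, discarding the linear term by $\mathbb{\hat{E}}[Z_{1}]=\mathbb{\hat{E}}[-Z_{1}]=0$, cancelling the consistency term against $\partial_{t}v$ via the PIDE, and removing the buffer $h$ through the scaling bound $\mathbb{\tilde{E}}[|\zeta_{h}|]\leq h^{1/\alpha}\mathbb{\tilde{E}}[|\zeta_{1}|]$. The interior-in-time regularity of $v$ that you identify as the load-bearing ingredient is exactly what \cite{BM2016} imports from \cite{CC1995,LD20161,LD20162}, as this paper itself notes.
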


\begin{remark}
When the above sublinear framework is constrained to the classical linear
case, by means of the solution regularity of PIDE (\ref{PIDE_Erhan}), it can
be verified that condition (ii) holds as long as $\beta_{i}$, $i=1,2$, in
(\ref{normal attraction of stable law}) are continuously differentiable on
their respective closed half-lines, see \cite{BM2016} or Remark \ref{Remark}.
\end{remark}

\section{Main results}

We first recall the definition of nonlinear L\'{e}vy process under sublinear
expectations as introduced in \cite{HP2021} and \cite{NN2017}.

\begin{definition}
A $d$-dimensional c\`{a}dl\`{a}g process $(X_{t})_{t\geq0}$ defined on a
sublinear expectation space $(\Omega,\mathcal{H},\mathbb{\hat{E})}$ is called
a nonlinear L\'evy process if the following properties hold.

\begin{description}
\item[(i)] $X_{0}=0$;

\item[(ii)] $(X_{t})_{t\geq0}$ has stationary increments, that is,
$X_{t}-X_{s}$ and $X_{t-s}$ are identically distributed for all $0\leq s\leq
t;$

\item[(iii)] $(X_{t})_{t\geq0}$ has independent increments, that is,
$X_{t}-X_{s}$ is independent from $(X_{t_{1}},\ldots,X_{t_{n}})$ for each
$n\in \mathbb{N}$ and $0\leq t_{1}\leq \cdots \leq t_{n}\leq s\leq t$.
\end{description}
\end{definition}

A nonlinear L\'{e}vy process is characterized via a set of L\'{e}vy triplets
$(F_{\mu},q,Q)$, where the first component $F_{\mu}$ is a L\'{e}vy measure
describing the jump uncertainty, and the second and third components
$(q,Q)\in \mathbb{R}^{d}\times \mathbb{S}_{+}(d)$ describe the mean and
volatility uncertainty. We call such a set an \emph{uncertainty set}
throughout the paper. Hu and Peng \cite{HP2009,HP2021} first proved that a
nonlinear L\'{e}vy process (with finite activity jumps) must admit an
uncertainty set in the spirit of L\'{e}vy-Khintchine representation. However,
a L\'{e}vy-Khintchine representation formula for a nonlinear L\'{e}vy process
(with infinite activity jumps) is still lacking as commented on Remark 23 in
\cite{HP2021} and Page 71 in \cite{NN2017}. It turns out such a representation
is crucial for the universal robust limit theorem.

We now introduce the universal robust limit theorem for a nonlinear L\'{e}vy
process. Let $\alpha \in(1,2)$, $(\underline{\Lambda},\overline{\Lambda})$ for
some $\underline{\Lambda},\overline{\Lambda}>0$, and $F_{\mu}$ be the $\alpha
$-stable L\'{e}vy measure on $(\mathbb{R}^{d},\mathcal{B}(\mathbb{R}^{d}))$,
\begin{equation}
F_{\mu}(B)=\int_{S}\mu(dz)\int_{0}^{\infty}\mathbbm{1}_{B}(rz)\frac
{dr}{r^{1+\alpha}},\text{ \ for }B\in \mathcal{B}(\mathbb{R}^{d}), \label{F_mu}%
\end{equation}
where $\mu$ is a finite measure on the unit sphere $S=\{z\in \mathbb{R}%
^{d}:|z|=1\}$. Set
\begin{equation}
\mathcal{L}_{0}=\left \{  F_{\mu}\  \text{measure on }\mathbb{R}^{d}:\mu
(S)\in(\underline{\Lambda},\overline{\Lambda})\right \}  , \label{L_0}%
\end{equation}
and $\mathcal{L\subset L}_{0}$ as a nonempty compact convex set.
%\begin{equation}  \label{estimate_1}
%\mathcal{K}:=\sup \limits_{F_{\mu}\in \mathcal{L}}\int_{\mathbb{R}^{d}}
%|z|\wedge|z|^{2}F_{\mu}(dz)<\infty,
%\end{equation}
%and
%\begin{equation}  \label{estimate_2}
%\lim_{\varepsilon \rightarrow0}\mathcal{K}_{\varepsilon}=0\text{ \ for \ }
%\mathcal{K}_{\varepsilon}:=\sup \limits_{F_{\mu}\in \mathcal{L}}\int
%_{|z|\leq \varepsilon}|z|^{2}F_{\mu}(dz).
%\end{equation}
Let $\{(X_{i},Y_{i},Z_{i})\}_{i=1}^{\infty}$ be an i.i.d. sequence of
$\mathbb{R}^{3d}$-valued random variables on a sublinear expectation space
$(\Omega,\mathcal{H},\mathbb{\hat{E}})$ in the sense that $(X_{i+1}%
,Y_{i+1},Z_{i+1})$ $\overset{d}{=}(X_{i},Y_{i},Z_{i})$ and $(X_{i+1}%
,Y_{i+1},Z_{i+1})$ is independent from $(X_{1},Y_{1},Z_{1}),\ldots
,(X_{i},Y_{i},Z_{i})$ for each $i\in \mathbb{N}$. Set
\[%
\begin{array}
[c]{rrr}%
S_{n}^{1}:=\sum \limits_{i=1}^{n}X_{i},\text{ } & S_{n}^{2}:=\sum
\limits_{i=1}^{n}Y_{i},\text{ } & S_{n}^{3}:=\sum \limits_{i=1}^{n}Z_{i}.
\end{array}
\]

We impose the following assumptions throughout the paper. The first two are
moment conditions on $(X_{1},Y_{1}, Z_{1})$ and the last one is a consistency
condition on $Z_{1}$.

\begin{description}
\item[(A1)] $\mathbb{\hat{E}}[X_{1}]=\mathbb{\hat{E}}[-X_{1}]=0$,
$\lim \limits_{\gamma \rightarrow \infty}\mathbb{\hat{E}}[(|X_{1}|^{2}-
\gamma)^{+}]=0$, and $\lim \limits_{\gamma \rightarrow \infty}\mathbb{\hat{E}
}[(|Y_{1}|-\gamma)^{+}]=0.$

\item[(A2)] $\mathbb{\hat{E}}[Z_{1}]=\mathbb{\hat{E}}[-Z_{1}]=0$ and
$M_{z}:=\sup \limits_{n}\mathbb{\hat{E}}[n^{-\frac{1}{\alpha} }|S_{n}%
^{3}|]<\infty$.

\item[(A3)] For each $\varphi \in C_{b}^{3}(\mathbb{R}^{d})$, the space of
functions on $\mathbb{R}^{d}$ with uniformly bounded derivatives up to the
order $3$, satisfies
\[
\frac{1}{s}\bigg \vert \mathbb{\hat{E}}\big[\varphi(z+s^{\frac{1}{\alpha}%
}Z_{1})-\varphi(z)\big]-s\sup \limits_{F_{\mu}\in \mathcal{L}}\int
_{\mathbb{R}^{d}}\delta_{\lambda}\varphi(z)F_{\mu}(d\lambda)\bigg \vert \leq
l(s)\rightarrow0
\]
uniformly on $z\in \mathbb{R}^{d}$ as $s\rightarrow0$, where $l$ is a function
on $[0,1]$ and $\delta_{\lambda}\varphi(z):=\varphi(z+\lambda)-\varphi
(z)-\langle D\varphi(z),\lambda \rangle$.
\end{description}

\begin{remark}
\label{Remark0} The assumption (A3) is essentially a consistency condition for
the distribution of the multidimensional $Z$, which has been exploited
successfully in the numerical analysis literature on the monotone
approximation schemes for nonlinear PDEs \cite{BJ2002,BJ2005,BJ2007,BS1991}.
In the one-dimensional case, the assumption (A3) is closely related to the
consistency condition proposed in Bayraktar and Munk \cite{BM2016} (see (ii)
in Theorem \ref{Theorem_Erhan}). However, they require that the solution
$v(t,x)$ of PIDE (\ref{PIDE_Erhan}) in prior satisfies the consistency
condition, whereas we only require the consistency condition (A3) holds
without involving the solution $v(t,x)$.
\end{remark}

\begin{remark}
\label{Remark} Although the assumption (A3) looks obscure, let us show that
when our attention is confined to the classical case, it turns out to be mild
and is more general than the characterization condition
(\ref{normal attraction of stable law}). We consider the two-dimensional case
in the following. The one-dimensional case can be found in \cite{BM2016}.

Let $\zeta=(\zeta^{1},\zeta^{2})$ be a classical $\alpha$-stable random
variable with $\zeta^{1}\perp \! \! \! \perp \zeta^{2}$ and L\'evy triplet
$(F_{\mu},0,0)$. From Samorodnitsky and Taqqu \cite[Example 2.3.5]{ST1994},
the finite measure $\mu$ in $F_{\mu}$ is discrete and concentrated on the
points $(1,0),(-1,0),(0,1)$, and $(0,-1)$. Denote
\[%
\begin{array}
[c]{llll}%
k_{1}^{1}=\mu((-1,0)), & k_{2}^{1}=\mu((1,0)), & k_{1}^{2}=\mu((0,-1)), &
k_{2}^{2}=\mu((0,1)).
\end{array}
\]
For $i\geq1$, let $Z_{i}=(Z_{i}^{1},Z_{i}^{2})$ be a zero mean classical
random variable with $Z_{i}^{1}\perp \! \! \! \perp Z_{i}^{2}$. Theorem
\ref{classical_condition} indicates that the i.i.d. sequence $\{Z_{i}%
\}_{i=1}^{\infty}$ is in the domain of normal attraction of an $\alpha$-stable
distribution $\zeta$ if and only if for $m=1,2$, $F_{Z_{1}^{m}}$ has the
cumulative distribution function
\[
F_{Z_{1}^{m}}(z)=\left \{
\begin{array}
[c]{ll}%
\displaystyle \left[  k_{1}^{m}/\alpha+\beta_{1}^{m}(z)\right]  \frac{1}{
|z|^{\alpha}}, & z<0,\\
\displaystyle1-\left[  k_{2}^{m}/\alpha+\beta_{2}^{m}(z)\right]  \frac{1}{
z^{\alpha}}, & z>0,
\end{array}
\right.
\]
where $\beta_{1}^{m}:$ $(-\infty,0]$ $\rightarrow \mathbb{R}$ and $\beta
_{2}^{m}:[0,\infty)\rightarrow \mathbb{R}$ are functions satisfying
\[
\lim_{z\rightarrow-\infty}\beta_{1}^{m}(z)=\lim_{z\rightarrow \infty}\beta
_{2}^{m}(z)=0.
\]

We further assume that $\beta_{1}^{m}$ and $\beta_{2}^{m}$, $m=1,2$, are
continuously differentiable functions defined on $(-\infty,0]$ and
$[0,\infty)$, respectively. It can be verified that $E[|Z_{1}^{m}|]<\infty$,
$m=1,2$. Moreover, for $\varphi \in C_{b}^{3}(\mathbb{R}^{2})$, we note that
\begin{align*}
E\big[\varphi(z+s^{\frac{1}{\alpha}}Z_{1})-\varphi(z)\big]  &  =E\big[\varphi
(z_{1}+s^{\frac{1}{\alpha}}Z_{1}^{1},z_{2}+s^{\frac{1}{\alpha}}Z_{1}
^{2})-\varphi(z_{1},z_{2}+s^{\frac{1}{\alpha}}Z_{1}^{2})\big]\\
&  \text{ \  \ }+E\big[\varphi(z_{1},z_{2}+s^{\frac{1}{\alpha}}Z_{1}
^{2})-\varphi(z_{1},z_{2})\big]\\
&  =E\big[\delta_{s^{1/\alpha}Z_{1}^{1}}^{1}\varphi(z_{1},z_{2}+s^{\frac
{1}{\alpha}}Z_{1}^{2})\big]+E\big[\delta_{s^{1/\alpha}Z_{1}^{2}}^{2}%
\varphi(z_{1},z_{2})\big],
\end{align*}
and
\[
\int_{\mathbb{R}^{2}}\delta_{\lambda}\varphi(z)F_{\mu}(d\lambda)=\int
_{\mathbb{R}}\delta_{\lambda_{1}}^{1}\varphi(z_{1},z_{2})F_{\mu}^{1}
(d\lambda_{1})+\int_{\mathbb{R}}\delta_{\lambda_{2}}^{2}\varphi(z_{1}
,z_{2})F_{\mu}^{2}(d\lambda_{2}),
\]
where
\begin{align*}
\delta_{\gamma}^{1}\varphi(x_{1},x_{2})  &  :=\varphi(x_{1}+\gamma
,x_{2})-\varphi(x_{1},x_{2})-D_{1}\varphi(x_{1},x_{2})\gamma,\\
\delta_{\gamma}^{2}\varphi(x_{1},x_{2})  &  :=\varphi(x_{1},x_{2}
+\gamma)-\varphi(x_{1},x_{2})-D_{2}\varphi(x_{1},x_{2})\gamma,
\end{align*}
and
\[
F_{\mu}^{m}(d\lambda_{m}):=\frac{k_{1}^{m}}{|\lambda_{m}|^{\alpha+1}
}\mathbbm{1}_{(-\infty,0)}(\lambda_{m})d\lambda_{m}+\frac{k_{2}^{m}}
{|\lambda_{m}|^{\alpha+1}}\mathbbm{1}_{(0,\infty)}(\lambda_{m})d\lambda
_{m},\text{ \ }m=1,2.
\]
Then, it follows that
\begin{align*}
&  \frac{1}{s}\bigg \vert E\big[\varphi(z+s^{\frac{1}{\alpha}}Z_{1}
)-\varphi(z)\big]-s\int_{\mathbb{R}^{2}}\delta_{\lambda}\varphi(z)F_{\mu
}(d\lambda)\bigg \vert \\
&  \leq \frac{1}{s}\bigg \vert E\big[\delta_{s^{1/\alpha}Z_{1}^{1}}^{1}
\varphi(z_{1},z_{2}+s^{\frac{1}{\alpha}}Z_{1}^{2})\big]-s\int_{\mathbb{R}
}\delta_{\lambda_{1}}^{1}\varphi(z_{1},z_{2})F_{\mu}^{1}(d\lambda
_{1})\bigg \vert \\
&  \text{ \  \ }+\frac{1}{s}\bigg \vert E\big[\delta_{s^{1/\alpha}Z_{1}^{2}
}^{2}\varphi(z_{1},z_{2})\big]-s\int_{\mathbb{R}}\delta_{\lambda_{2}}
^{2}\varphi(z_{1},z_{2})F_{\mu}^{2}(d\lambda_{2})\bigg \vert \\
&  :=I+II.
\end{align*}
In view of Lemma \ref{Lipschitz}, we get
\begin{align*}
I  &  =\frac{1}{s}\bigg \vert E\bigg[\bigg(E\big[\delta_{s^{1/\alpha}Z_{1}%
^{1}}^{1}\varphi(z_{1},z_{2}+x_{2})\big]-s\int_{\mathbb{R}}\delta_{\lambda
_{1}}^{1}\varphi(z_{1},z_{2}+x_{2})F_{\mu}^{1}(d\lambda_{1})\\
&  \text{ \  \ }+s\int_{\mathbb{R}}\left(  \delta_{\lambda_{1}}^{1}%
\varphi(z_{1},z_{2}+x_{2})-\delta_{\lambda_{1}}^{1}\varphi(z_{1}%
,x_{2})\right)  F_{\mu}^{1}(d\lambda_{1})\bigg)_{x_{2}=s^{1/\alpha}Z_{1}^{2}%
}\bigg]\bigg \vert \\
&  \leq Cs^{\frac{1}{\alpha}}E[|Z_{1}^{2}|]\\
&  \  \  \ +\frac{1}{s}E\bigg[\bigg \vert E\big[\delta_{s^{1/\alpha}Z_{1}^{1}%
}^{1}\varphi(z_{1},z_{2}+x_{2})\big]-s\int_{\mathbb{R}}\delta_{\lambda_{1}%
}^{1}\varphi(z_{1},z_{2}+x_{2})F_{\mu}^{1}(d\lambda_{1})\bigg \vert_{x_{2}%
=s^{1/\alpha}Z_{1}^{2}}\bigg].
\end{align*}
Following along similar arguments as in (3.4)-(3.8) in \cite{BM2016}, we
obtain that%
\[
\frac{1}{s}\bigg \vert E\big[\delta_{s^{1/\alpha}Z_{1}^{1}}^{1}\varphi
(z_{1},z_{2}+x_{2})\big]-s\int_{\mathbb{R}}\delta_{\lambda_{1}}^{1}%
\varphi(z_{1},z_{2}+x_{2})F_{\mu}^{1}(d\lambda_{1})\bigg \vert \rightarrow0
\]
uniformly on $(z_{1},z_{2}+x_{2})\in \mathbb{R}^{2}$ as $s\rightarrow0$, and
similarly, the part $II\ $converges to 0 uniformly in $(z_{1},z_{2}%
)\in \mathbb{R}^{2}\ $as $s\rightarrow0$. Thus, the assumption (A3) holds.
\end{remark}

We are ready to state our main result of this paper, which is dubbed as
\emph{a universal robust limit theorem} under sublinear expectation. It covers
all the existing robust limit theorems in the literature, namely, Peng's
robust central limit theorem for $G$-distribution (Theorem \ref{LLT+CLT}) and
Bayraktar-Munk's robust limit theorem for $\alpha$-stable distribution (see
\cite{BM2016}).

\begin{theorem}
\label{main theorem} Suppose that assumptions (A1)-(A3) hold. Then, there
exists a nonlinear L\'evy process $\tilde{L}_{t}=(\tilde{\xi}_{t},\tilde{\eta
}_{t},\tilde{\zeta}_{t})$, $t\in[0,1]$, associated with an uncertainty set
$\Theta \subset \mathcal{L}\times \mathbb{R}^{d}\times \mathbb{S}_{+}(d)$
satisfying
\[
\sup_{(F_{\mu},q,Q)\in \Theta}\left \{  \int_{\mathbb{R}^{d}}|z|\wedge
|z|^{2}F_{\mu}(dz)+|q|+|Q|\right \}  <\infty,
\]
such that for any $\phi \in C_{b,Lip}(\mathbb{R}^{3d})$,
\[
\lim_{n\rightarrow \infty}\mathbb{\hat{E}}\left[  \phi \left(  \frac{S_{n}^{1}
}{\sqrt{n}},\frac{S_{n}^{2}}{n},\frac{S_{n}^{3}}{\sqrt[\alpha]{n}}\right)
\right]  = \mathbb{\tilde{E}}[\phi(\tilde{\xi}_{1},\tilde{\eta}_{1},
\tilde{\zeta}_{1})]=u^{\phi}(1,0,0,0),
\]
where $u^{\phi}$ is the unique viscosity solution of the following fully
nonlinear PIDE
\begin{equation}
\left \{
\begin{array}
[c]{l}%
\displaystyle \partial_{t}u(t,x,y,z)-\sup \limits_{(F_{\mu},q,Q)\in \Theta
}\left \{  \int_{\mathbb{R}^{d}}\delta_{\lambda}u(t,x,y,z)F_{\mu}(d\lambda)
\right. \\
\displaystyle \text{\  \  \  \  \  \  \  \  \  \  \  \  \  \  \ }\left.  +\langle
D_{y}u(t,x,y,z),q\rangle+\frac{1}{2}tr[D_{x}^{2}u(t,x,y,z)Q]\right \}  =0,\\
\displaystyle u(0,x,y,z)=\phi(x,y,z),\text{ \ }\forall(t,x,y,z)\in
[0,1]\times \mathbb{R}^{3d},
\end{array}
\right.  \label{PIDE}%
\end{equation}
with $\delta_{\lambda}u(t,x,y,z)=u(t,x,y,z+\lambda)-u(t,x,y,z)-\langle
D_{z}u(t,x,y,z),\lambda \rangle$.
%\begin{equation}
%\left \{
%\begin{array}{l}
%\displaystyle \partial_{t}u(t,x,y,z)-\tilde{G}(u(t,x,y,z+
%\cdot),D_{y}u(t,x,y,z),D_{x}^{2} u(t,x,y,z))=0, \\
%\displaystyle u(0,x,y,z)=\phi(x,y,z),\text{ \ }\forall(t,x,y,z)\in
%\lbrack0,1]\times \mathbb{R}^{d}\times \mathbb{R}^{d}\times \mathbb{R}^{d},
%\end{array}
%\right.   \label{PIDE}
%\end{equation}
%with $\tilde{G}: C_b^3(\mathbb{R}^d)\times\mathbb{R}^{d}\times \mathbb{S}
%(d)\rightarrow \mathbb{R}$ defined by
%\begin{align*}
%\tilde{G}(f(\cdot),p,A):=\sup \limits_{(F_{\mu},q,Q)\in \Theta }\left \{ \int_{
%\mathbb{R}^{d}}\delta_{\lambda}f(\lambda)F_{\mu}(d\lambda)+\langle
%p,q\rangle+\frac{1}{2}tr[AQ]\right \},
%\end{align*}
%where
%\begin{align*}
%\delta_{\lambda}f(\lambda):=f(\lambda)-f(0)-\langle D_zf(0),\lambda \rangle,
%\end{align*}
%and $\Theta \subset \mathcal{L}\times \mathbb{R}^{d}\times \mathbb{S}_{+}
%\mathbb{(}d\mathbb{)}$ satisfies
%\begin{equation*}
%\sup_{(F_{\mu},q,Q)\in \Theta}\left \{ \int_{\mathbb{R}^{d}}|z|\wedge
%|z|^{2}F_{\mu}(dz)+|q|+|Q|\right \} <\infty.
%\end{equation*}

\end{theorem}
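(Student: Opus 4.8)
The plan is to separate the three coordinates according to their distinct scalings $\sqrt{n}$, $n$, and $\sqrt[\alpha]{n}$, to construct a nonlinear L\'evy process whose time-one marginal is the desired limit, and finally to identify this process with the generator of the PIDE (\ref{PIDE}) through a L\'evy--Khintchine representation. The argument is deliberately probabilistic and built on the weak convergence tools of Section 2, so as to bypass the (currently unavailable) regularity theory for (\ref{PIDE}).

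First I would prove tightness of the family $\{\mathbb{F}_{W_{n}}\}_{n\geq1}$ of sublinear expectations induced by $W_{n}:=(S_{n}^{1}/\sqrt{n},\,S_{n}^{2}/n,\,S_{n}^{3}/\sqrt[\alpha]{n})$. The condition $\lim_{\gamma\to\infty}\mathbb{\hat{E}}[(|X_{1}|^{2}-\gamma)^{+}]=0$ in (A1) yields a uniform second-moment control of the first coordinate, the condition on $Y_{1}$ controls the second, and the uniform bound $M_{z}=\sup_{n}\mathbb{\hat{E}}[n^{-1/\alpha}|S_{n}^{3}|]<\infty$ in (A2) controls the heavy-tailed third coordinate at the level of first moments. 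Combining these through a truncation and a Markov-type estimate produces, for every $\varepsilon>0$, a dominating test function as in Definition \ref{def_2.7}. By Corollary \ref{corollary tight theorem}, some subsequence of $\{W_{n}\}$ then converges in distribution; denote its limit by $\tilde{L}_{1}=(\tilde{\xi}_{1},\tilde{\eta}_{1},\tilde{\zeta}_{1})$.

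Second, I would upgrade this subsequential limit to an entire nonlinear L\'evy process $(\tilde{L}_{t})_{t\in[0,1]}$ via a Donsker-type construction. For rational $t$ the increment over $[0,t]$ is obtained as the distributional limit of block sums of the $(X_{i},Y_{i},Z_{i})$, and the three scalings reproduce exactly the $G$-distributional self-similarity (\ref{G}), the maximal-distribution scaling, and the $\alpha$-stable scaling $t^{1/\alpha}$. Stationarity and independence of increments pass to the limit using Proposition \ref{independent copy converge in law} and Remark \ref{ramark i.i.d.}, while the first two coordinates $(\tilde{\xi}_{1},\tilde{\eta}_{1})$ are seen to be $G$-distributed exactly as in Theorem \ref{LLT+CLT}, with $(q,Q)$ ranging over the set $\Gamma$ of Remark \ref{remark_G}.

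Third, and this is the crux, I would establish a L\'evy--Khintchine representation identifying the uncertainty set $\Theta\subset\mathcal{L}\times\mathbb{R}^{d}\times\mathbb{S}_{+}(d)$ and show that $u^{\phi}(t,x,y,z):=\mathbb{\tilde{E}}[\phi(x+\tilde{\xi}_{t},y+\tilde{\eta}_{t},z+\tilde{\zeta}_{t})]$ is a viscosity solution of (\ref{PIDE}). Since characteristic functions do not exist under $\mathbb{\hat{E}}$, I would instead exploit the flow property $u^{\phi}(t+s,\cdot)=u^{u^{\phi}(t,\cdot)}(s,\cdot)$ coming from the independent and stationary increments, and perform a short-time expansion in $s$: the diffusive and drift coordinates contribute the local terms $\frac{1}{2}\,\mathrm{tr}[D_{x}^{2}u\,Q]+\langle D_{y}u,q\rangle$ as in the $G$-framework, whereas the consistency assumption (A3) is precisely what forces the nonlocal contribution to be $\sup_{F_{\mu}\in\mathcal{L}}\int_{\mathbb{R}^{d}}\delta_{\lambda}u\,F_{\mu}(d\lambda)$. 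The difficulty is to bound the remainder uniformly in $(x,y,z)$ despite the infinite activity and heavy tails of the $\alpha$-stable measures and \emph{without} any a priori regularity of $u^{\phi}$; I expect this to require a delicate recursive estimate for the $\alpha$-stable L\'evy measure, replacing the interior-regularity arguments of \cite{BM2016,P20082}. Once $u^{\phi}$ is identified as the unique viscosity solution, the value $u^{\phi}(1,0,0,0)$ is independent of the chosen subsequence, so the full sequence converges and $\mathbb{\tilde{E}}[\phi(\tilde{L}_{1})]=u^{\phi}(1,0,0,0)$. The main obstacle is this third step: producing the representation and the PIDE simultaneously, for which the uniform short-time control of the nonlocal term is the decisive technical hurdle.
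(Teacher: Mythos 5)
Your proposal follows essentially the same route as the paper's own proof: tightness of the rescaled sums and weak compactness to extract the limit, a Donsker-type construction of the full nonlinear L\'evy process $(\tilde{L}_{t})_{t\in[0,1]}$ with $(\tilde{\xi}_{1},\tilde{\eta}_{1})$ identified via Theorem \ref{LLT+CLT}, and then the decisive step of a recursive estimate for the $\alpha$-stable L\'evy measure (the paper's Theorem \ref{recursive}) feeding into a L\'evy--Khintchine representation (Theorem \ref{represent theorem}) and the dynamic programming principle to identify $u^{\phi}$ as the unique viscosity solution of (\ref{PIDE}), whence the whole sequence converges. Your identification of the uniform short-time control of the nonlocal term as the key technical hurdle, and of (A3) as what forces the jump part of the generator, matches the paper's Sections \ref{Section_Estimates for stable Levy measure}--\ref{Section_connection PIDE} precisely.
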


\begin{proof}
We outline the main steps below, with the detailed proof provided in Section 4.

\textbf{(i)} By using the notions of tightness and weak compactness, we first
construct a nonlinear L\'evy process $\tilde{L}_{t}:=(\tilde{ \xi}_{t},
\tilde{\eta}_{t},\tilde{\zeta}_{t})$, $t\in[0,1]$, on some sublinear
expectation space $(\tilde{\Omega},Lip(\tilde{\Omega}),\mathbb{\tilde{E}})$ in
Section \ref{Section_The construction of Levy process}, which is generated by
the weak convergence limit of the sequences $\big \{ \big(
(t/n)^{1/2}S_{n}^{1},(t/n)S_{n}^{2},(t/n)^{1/\alpha}S_{n}^{3}\big)\big \}
_{n=1}^{\infty}$ for $t\in \lbrack0,1]$.

\textbf{(ii)} To link the nonlinear L\'evy process $(\tilde{L}_{t}
)_{t\in \lbrack0,1]}$ with the fully nonlinear PIDE (\ref{PIDE}), a key step is
to give the characterization of
\[
\lim \limits_{\delta \rightarrow0}\mathbb{\tilde{E}}[\varphi(\tilde{\zeta}
_{\delta})+\langle p,\tilde{\eta}_{\delta}\rangle+\frac{1}{2}\langle A
\tilde{\xi}_{\delta},\tilde{\xi}_{\delta}\rangle]\delta^{-1}%
\]
for $\varphi \in C_{b}^{3}(\mathbb{R}^{d})$ with $\varphi(0)=0$ and
$(p,A)\in \mathbb{R}^{d}\times \mathbb{S}(d)$. It follows from a new estimate
for the $\alpha$-stable L\'{e}vy measure and a new L\'evy-Khintchine
representation formula for the nonlinear L\'{e}vy process in Sections
\ref{Section_Estimates for stable Levy measure} and
\ref{Section_Representation of Levy process}, respectively.

\textbf{(iii)} Once the representation of the nonlinear L\'evy process is
established, with the help of nonlinear stochastic analysis techniques and
viscosity solution methods, Theorem \ref{main theorem} is a consequence of the
dynamic programming principle in Section \ref{Section_connection PIDE} by
defining $u(t,x,y,z)=\mathbb{\tilde{E}}[\phi(x+\tilde{\xi} _{t},y+\tilde{
\eta}_{t},z+\tilde{\zeta}_{t})]$, for $(t,x,y,z)\in[0,1]\times \mathbb{R}^{3d}$.
\end{proof}

The following corollary can be readily obtained from Theorem
\ref{main theorem}.

\begin{corollary}
Suppose that assumptions in Theorem \ref{main theorem} hold. Then, for any
$\phi \in C_{b,Lip}(\mathbb{R}^{d})$,
\[
\lim_{n\rightarrow \infty}\mathbb{\hat{E}}\left[  \phi \left(  \frac{S_{n}^{1}
}{\sqrt{n}}+\frac{S_{n}^{2}}{n}+\frac{S_{n}^{3}}{\sqrt[\alpha]{n}}\right)
\right]  =u^{\phi}(1,0),
\]
where $u^{\phi}$ is the unique viscosity solution of the following fully
nonlinear PIDE
\[
\left \{
\begin{array}
[c]{l}%
\displaystyle \partial_{t}u(t,x)-\sup \limits_{(F_{\mu},q,Q)\in \Theta}\left \{
\int_{\mathbb{R}^{d}}\delta_{\lambda}u(t,x)F_{\mu}(d\lambda) +\langle
D_{x}u(t,x),q\rangle+\frac{1}{2}tr[D_{x}^{2}u(t,x)Q]\right \}  =0,\\
\displaystyle u(0,x)=\phi(x),\text{ \ }\forall(t,x)\in[0,1]\times \mathbb{R}
^{d},
\end{array}
\right.
\]
where $\delta_{\lambda}u(t,x)=u(t,x+\lambda)-u(t,x)-\langle D_{x}%
u(t,x),\lambda \rangle$ and the uncertainty set $\Theta$ defined in Theorem
\ref{main theorem}.
%with $\delta_{\lambda}u(t,x)=u(t,x+\lambda)-u(t,x)-\langle Du(t,x),\lambda
%\rangle$ and $\Theta \subset \mathcal{L}\times \mathbb{R}^{d}\times \mathbb{S}
%_{+}\mathbb{(}d\mathbb{)}$ satisfying
%\[
%\sup_{(F_{\mu},q,Q)\in \Theta}\left \{  \int_{\mathbb{R}^{d}}|z|\wedge
%|z|^{2}F_{\mu}(dz)+|q|+|Q|\right \}  <\infty.
%\]

\end{corollary}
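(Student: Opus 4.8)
The plan is to deduce the corollary from Theorem \ref{main theorem} by composing $\phi$ with the linear summation map. Define $\psi(x,y,z):=\phi(x+y+z)$ for $(x,y,z)\in\mathbb{R}^{3d}$. Since $\phi\in C_{b,Lip}(\mathbb{R}^{d})$ and the map $(x,y,z)\mapsto x+y+z$ is linear, we have $\psi\in C_{b,Lip}(\mathbb{R}^{3d})$. Moreover
\[
\mathbb{\hat{E}}\left[\phi\left(\frac{S_{n}^{1}}{\sqrt{n}}+\frac{S_{n}^{2}}{n}+\frac{S_{n}^{3}}{\sqrt[\alpha]{n}}\right)\right]=\mathbb{\hat{E}}\left[\psi\left(\frac{S_{n}^{1}}{\sqrt{n}},\frac{S_{n}^{2}}{n},\frac{S_{n}^{3}}{\sqrt[\alpha]{n}}\right)\right],
\]
so Theorem \ref{main theorem} applies directly to the test function $\psi$ and shows that the limit exists and equals $v(1,0,0,0)$, where $v$ denotes the unique viscosity solution of the fully nonlinear PIDE (\ref{PIDE}) with initial data $v(0,x,y,z)=\psi(x,y,z)$.

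It then remains to identify $v(1,0,0,0)$ with $u^{\phi}(1,0)$. First I would observe that the operator in (\ref{PIDE}) is translation invariant in $(x,y,z)$, since it involves $\Theta$ (which carries no spatial dependence) and only derivatives and increments of $u$. Hence, for any $(a,b,c)\in\mathbb{R}^{3d}$ with $a+b+c=0$, the shifted function $(t,x,y,z)\mapsto v(t,x+a,y+b,z+c)$ is again a viscosity solution of (\ref{PIDE}), and its initial datum is $\psi(x+a,y+b,z+c)=\phi(x+y+z)=\psi(x,y,z)$. By the comparison principle for (\ref{PIDE}) it coincides with $v$; since $\{(a,b,c):a+b+c=0\}$ acts transitively on each level set of $(x,y,z)\mapsto x+y+z$, it follows that $v(t,x,y,z)$ depends on $(x,y,z)$ only through the sum $x+y+z$. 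I may therefore write $v(t,x,y,z)=u^{\phi}(t,x+y+z)$ for a function $u^{\phi}$ on $[0,1]\times\mathbb{R}^{d}$, with $u^{\phi}(0,\cdot)=\phi$.

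Finally I would verify that $u^{\phi}$ is the viscosity solution of the one-dimensional PIDE in the statement, which closes the identification by its own uniqueness. Given a smooth test function $\varphi(t,s)$ touching $u^{\phi}$ from above at $(\bar{t},\bar{s})$, set $\Phi(t,x,y,z):=\varphi(t,x+y+z)$; then $v-\Phi=(u^{\phi}-\varphi)(t,x+y+z)$ attains a global maximum on $\{t=\bar{t},\,x+y+z=\bar{s}\}$, so $\Phi$ touches $v$ from above there. The chain rule gives $\partial_{t}\Phi=\partial_{t}\varphi$, $D_{x}\Phi=D_{y}\Phi=D_{z}\Phi=D_{s}\varphi$, $D_{x}^{2}\Phi=D_{s}^{2}\varphi$, and $\delta_{\lambda}\Phi(t,x,y,z)=\delta_{\lambda}\varphi(t,x+y+z)$ for the $z$-increment, so substituting $\Phi$ into the subsolution inequality for (\ref{PIDE}) reproduces exactly the subsolution inequality for the one-dimensional PIDE at $(\bar{t},\bar{s})$; the supersolution case is identical. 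Thus $u^{\phi}$ solves the one-dimensional PIDE, whence $u^{\phi}(1,0)=v(1,0,0,0)=\lim_{n\rightarrow\infty}\mathbb{\hat{E}}[\phi(S_{n}^{1}/\sqrt{n}+S_{n}^{2}/n+S_{n}^{3}/\sqrt[\alpha]{n})]$. The only substantive ingredients are the two appeals to comparison/uniqueness for these degenerate PIDEs---one to obtain the translation invariance of $v$ and one to pin down $u^{\phi}$---both available from the well-posedness established in the main body; the viscosity change of variables itself is unobstructed, since it merely composes a one-dimensional test function with the smooth surjection $(x,y,z)\mapsto x+y+z$, incurring no loss of regularity.
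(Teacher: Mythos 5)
Your proposal is correct, and it reaches the paper's conclusion by a genuinely different route in the identification step. The paper, like you, feeds $\tilde{\phi}(x,y,z):=\phi(x+y+z)$ into Theorem \ref{main theorem}, but then it exploits the stochastic representation supplied by Theorem \ref{unique viscosity theorem}: since $v(t,x,y,z)=\mathbb{\tilde{E}}[\tilde{\phi}(x+\tilde{\xi}_{t},y+\tilde{\eta}_{t},z+\tilde{\zeta}_{t})]=\mathbb{\tilde{E}}[\phi((x+y+z)+\tilde{\xi}_{t}+\tilde{\eta}_{t}+\tilde{\zeta}_{t})]$, the factorization $v(t,x,y,z)=u^{\phi}(t,x+y+z)$ is immediate, with $u^{\phi}$ defined probabilistically; the transfer of the equation is then asserted via the chain-rule identities $\partial_{t}u=\partial_{t}v$, $D_{x}u=D_{y}v=D_{z}v$, $D_{x}^{2}u=D_{x}^{2}v$. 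You obtain the same factorization without touching the nonlinear L\'{e}vy process at all, using only PDE structure: spatial translation invariance of the operator plus the comparison principle for (\ref{PIDE}) forces $v$ to be constant on the level sets of $x+y+z$, and then your test-function composition $\Phi(t,x,y,z)=\varphi(t,x+y+z)$ shows the induced function solves the one-dimensional PIDE, with its own uniqueness closing the argument. What each approach buys: the paper's is shorter because the probabilistic formula makes the reduction trivial, whereas your route is purely analytic and in one respect more careful---the paper's derivative identities are formal (neither $u$ nor $v$ need be differentiable), and your viscosity-sense composition is exactly the rigorous version of that step. The price you pay is two extra appeals to comparison/uniqueness (for the $3d$ equation and the $1d$ equation); both are legitimately available, since the paper's uniqueness citation (Corollary 55 of \cite{HP2021}) covers equations of both forms, and uniqueness of the one-dimensional PIDE is in any case needed for the corollary's statement to be well posed.
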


\begin{proof}
For any $\phi \in C_{b,Lip}(\mathbb{R}^{d})$, define $\tilde{\phi}%
(x,y,z):=\phi(x+y+z)\in C_{b,Lip}(\mathbb{R}^{3d})$, for $(x,y,z)\in
\mathbb{R}^{3d}$. From Theorem \ref{main theorem} we know that
$v(t,x,y,z):=\mathbb{\tilde{E}}[\tilde{\phi}(x+\tilde{\xi}_{t},y+\tilde{\eta
}_{t},z+\tilde{\zeta}_{t})]$ is the unique viscosity solution of the PIDE
(\ref{PIDE}). Set $u(t,x):=\mathbb{\tilde{E}}[\phi(x+\tilde{\xi}_{t}%
+\tilde{\eta}_{t}+ \tilde{\zeta}_{t})]$, for $(t,x)\in \lbrack0,1]\times
\mathbb{R}^{d}$. Noting that $u(t,x+y+z)=v(t,x,y,z)$, $\partial_{t}%
u=\partial_{t}v$, $D_{x}u=D_{y}v=D_{z}v$, and $D_{x}^{2}u=D_{x}^{2}v$, we
conclude the result.
\end{proof}

%\begin{corollary}
%Suppose that assumptions in Theorem \ref{main theorem} hold and
%$(X_1,Y_1)\perp \!\!\! \perp Z_1$. Then, for any $\phi \in C_{b,Lip}(\mathbb{R}^{3d})$,
%\begin{equation*}
%\lim_{n\rightarrow \infty}\mathbb{\hat{E}}\left[ \phi \left( \frac{S_{n}^{1}%
%}{\sqrt{n}},\frac{S_{n}^{2}}{n},\frac{S_{n}^{3}}{\sqrt[\alpha]{n}}\right) %
%\right] =u^{\phi}(1,0,0,0),
%\end{equation*}
%where $u^{\phi}\ $is the unique viscosity solution of the following fully
%nonlinear PIDE:
%\textcolor[rgb]{1,0,0}{\begin{equation*}
%\left \{
%\begin{array}{l}
%\displaystyle \partial_{t}u(t,x)-\sup \limits_{F_{\mu}\in \mathcal{L} }\left
%\{ \int_{\mathbb{R}^{d}}\delta_{\lambda}u(t,x,y,z)F_{\mu} (d\lambda)\right\}
%\\
%\ \ \ \ \ \ \ \ \ \ \ -G(D_y u(t,x,y,z), D_x^2u(t,x,y,z))=0, \\
%\displaystyle u(0,x,y,z)=\phi(x,y,z),\text{ \ }(t,x,y,z)\in \lbrack
%0,1]\times \mathbb{R}^{d}\times \mathbb{R}^{d}\times \mathbb{R}^{d},%
%\end{array}
%\right.
%\end{equation*}}
%with $\delta_{\lambda}u(t,x,y,z):=u(t,x,y,z+\lambda)-u(t,x,y,z)-\langle
%D_{z}u(t,x,y,z),\lambda \rangle$ and $G:$ $\mathbb{R}^{d}\times \mathbb{S}(d)
%$ $\rightarrow \mathbb{R}$ given in Remark \ref{remark_G}.
%\end{corollary}
%\begin{proof}
%\textcolor[rgb]{1.00,0.00,0.00}{???Give a sketch proof-change position???}
%\end{proof}

The following corollary extends the $\alpha$-stable limit theorem for $\alpha
$-stable distribution under sublinear expectation in Bayraktar and Munk
\cite[Theorem 3.1]{BM2016} from one-dimensional to multidimensional case under
a weaker consistency condition.

\begin{corollary}
\label{corollary atable}Suppose that assumptions (A2)-(A3) hold. Then, there
exists a nonlinear L\'{e}vy process $(\tilde{\zeta}_{t})_{t\in \lbrack0,1]}$
associated with an uncertainty set $\Theta=\{(F_{\mu},0,0):F_{\mu}%
\in \mathcal{L}\}$, such that for any $\phi \in C_{b,Lip}(\mathbb{R}^{d})$,
\[
\lim_{n\rightarrow \infty}\mathbb{\hat{E}}\left[  \phi \left(  \frac{S_{n}^{3}%
}{\sqrt[\alpha]{n}}\right)  \right]  =\mathbb{\tilde{E}}[\phi(\tilde{\zeta
}_{1})]=u^{\phi}(1,0),
\]
where $u^{\phi}$ is the unique viscosity solution of the following fully
nonlinear PIDE
\begin{equation}
\left \{
\begin{array}
[c]{l}%
\displaystyle \partial_{t}u(t,x)-\sup \limits_{F_{\mu}\in \mathcal{L}}\left \{
\int_{\mathbb{R}^{d}}\delta_{\lambda}u(t,x)F_{\mu}(d\lambda)\right \}  =0,\\
\displaystyle u(0,x)=\phi(x),\text{ \ }\forall(t,x)\in \lbrack0,1]\times
\mathbb{R}^{d},
\end{array}
\right.  \label{PIDE-jump}%
\end{equation}
where $\delta_{\lambda}u(t,x)=u(t,x+\lambda)-u(t,x)-\langle D_{x}%
u(t,x),\lambda \rangle$. In fact, $\tilde{\zeta}$ is a nonlinear $\alpha
$-stable process satisfying a scaling property, that is, $\tilde{\zeta}_{\beta
t}$ and $\beta^{1/\alpha}\tilde{\zeta}_{t}$ are identically distributed, for
any $0<\beta<1$ and $0\leq t\leq1$.
\end{corollary}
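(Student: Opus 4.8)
The plan is to obtain Corollary \ref{corollary atable} as a specialization of Theorem \ref{main theorem} to the degenerate case $X_i\equiv0$, $Y_i\equiv0$, and then to derive the self-similarity from the scaling invariance of the $\alpha$-stable L\'evy measure (\ref{F_mu}) together with the uniqueness of viscosity solutions. First I would set $X_i=Y_i=0$ for all $i$, so that only $\{Z_i\}_{i=1}^{\infty}$ remains. Then assumption (A1) holds trivially, since $\mathbb{\hat{E}}[X_1]=\mathbb{\hat{E}}[-X_1]=0$ and $(|X_1|^2-\gamma)^+=(|Y_1|-\gamma)^+=0$ for every $\gamma>0$; together with (A2)--(A3) this verifies all the hypotheses of Theorem \ref{main theorem}. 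For $\phi\in C_{b,Lip}(\mathbb{R}^d)$, I would set $\tilde{\phi}(x,y,z):=\phi(z)\in C_{b,Lip}(\mathbb{R}^{3d})$. Because $S_n^1=S_n^2=0$, the first two components of the limit vanish, i.e. $\tilde{\xi}_t\equiv0$ and $\tilde{\eta}_t\equiv0$, and Theorem \ref{main theorem} yields
\[
\lim_{n\rightarrow\infty}\mathbb{\hat{E}}\left[\phi\left(\frac{S_n^3}{\sqrt[\alpha]{n}}\right)\right]=\lim_{n\rightarrow\infty}\mathbb{\hat{E}}\left[\tilde{\phi}\left(\frac{S_n^1}{\sqrt{n}},\frac{S_n^2}{n},\frac{S_n^3}{\sqrt[\alpha]{n}}\right)\right]=\mathbb{\tilde{E}}[\tilde{\phi}(0,0,\tilde{\zeta}_1)]=\mathbb{\tilde{E}}[\phi(\tilde{\zeta}_1)].
\]

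Next I would identify the uncertainty set and reduce the equation. Since $\tilde{\xi}_t\equiv0$ and $\tilde{\eta}_t\equiv0$, the diffusion and drift components of every triplet in $\Theta$ must vanish; equivalently, the function $G(p,A)=\mathbb{\hat{E}}[\frac{1}{2}\langle AX_1,X_1\rangle+\langle p,Y_1\rangle]\equiv0$, so the set $\Gamma$ of Remark \ref{remark_G} collapses to $\{(0,0)\}$. Hence $\Theta=\{(F_\mu,0,0):F_\mu\in\mathcal{L}\}$, with the jump uncertainty $\mathcal{L}$ supplied by the consistency condition (A3). Consequently the drift and diffusion terms drop out of PIDE (\ref{PIDE}), whose initial datum $\tilde{\phi}(x,y,z)=\phi(z)$ depends only on $z$; by uniqueness of the viscosity solution, $u^{\tilde{\phi}}$ is independent of $(x,y)$, and writing $u^{\tilde{\phi}}(t,x,y,z)=u^{\phi}(t,z)$ we see that $u^{\phi}$ solves (\ref{PIDE-jump}) and that $\mathbb{\tilde{E}}[\phi(\tilde{\zeta}_1)]=u^{\tilde{\phi}}(1,0,0,0)=u^{\phi}(1,0)$.

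For the self-similarity $\tilde{\zeta}_{\beta t}\overset{d}{=}\beta^{1/\alpha}\tilde{\zeta}_t$ with $0<\beta<1$ and $0\leq t\leq1$, I would argue at the level of the PIDE. Fixing $\phi$, set $w(t,z):=u^{\phi}(\beta t,\beta^{1/\alpha}z)$. The chain rule gives $\partial_t w(t,z)=\beta(\partial_t u^{\phi})(\beta t,\beta^{1/\alpha}z)$ and $\delta_\lambda w(t,z)=(\delta_{\beta^{1/\alpha}\lambda}u^{\phi})(\beta t,\beta^{1/\alpha}z)$, while the self-similarity of (\ref{F_mu}), obtained from the substitution $s=\beta^{1/\alpha}r$,
\[
\int_{\mathbb{R}^d}h(\beta^{1/\alpha}\lambda)F_\mu(d\lambda)=\beta\int_{\mathbb{R}^d}h(\lambda)F_\mu(d\lambda),
\]
yields $\sup_{F_\mu\in\mathcal{L}}\int_{\mathbb{R}^d}\delta_\lambda w(t,z)F_\mu(d\lambda)=\beta\sup_{F_\mu\in\mathcal{L}}\int_{\mathbb{R}^d}(\delta_\lambda u^{\phi})(\beta t,\beta^{1/\alpha}z)F_\mu(d\lambda)$. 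Combining these, $w$ solves (\ref{PIDE-jump}) with initial datum $\phi_\beta(z):=\phi(\beta^{1/\alpha}z)$, so by uniqueness $u^{\phi}(\beta t,\beta^{1/\alpha}z)=u^{\phi_\beta}(t,z)=\mathbb{\tilde{E}}[\phi(\beta^{1/\alpha}z+\beta^{1/\alpha}\tilde{\zeta}_t)]$. Setting $z=0$ gives $\mathbb{\tilde{E}}[\phi(\tilde{\zeta}_{\beta t})]=\mathbb{\tilde{E}}[\phi(\beta^{1/\alpha}\tilde{\zeta}_t)]$ for all $\phi\in C_{b,Lip}(\mathbb{R}^d)$, which is the claimed scaling.

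The main obstacle is the rigorous identification of $\Theta$ as exactly $\{(F_\mu,0,0):F_\mu\in\mathcal{L}\}$, i.e. confirming that the drift and diffusion components genuinely vanish rather than merely being dominated; here I would rely on the construction in the proof of Theorem \ref{main theorem}, where $\tilde{\xi}_t$ and $\tilde{\eta}_t$ arise as weak limits of the identically-zero sequences $(t/n)^{1/2}S_n^1$ and $(t/n)S_n^2$ and are therefore degenerate. A secondary point is that the scaling computation for $w$ must be carried out in the viscosity sense and that the comparison principle for the degenerate pure-jump equation (\ref{PIDE-jump}) is needed both for the dimensional reduction $u^{\tilde{\phi}}=u^{\phi}$ and for the identity $w=u^{\phi_\beta}$; both are furnished by the uniqueness assertion already contained in Theorem \ref{main theorem}.
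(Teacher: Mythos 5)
Your proposal is correct and follows essentially the same route as the paper: the convergence and PIDE characterization come from specializing Theorem \ref{main theorem} to the degenerate case $X_i=Y_i=0$ (you spell out the identification $\Theta=\{(F_{\mu},0,0):F_{\mu}\in\mathcal{L}\}$ and the dimensional reduction that the paper leaves implicit), and the scaling property is proved exactly as in the paper, via the self-similarity $F_{\mu}(B)=\beta F_{\mu}(\beta^{1/\alpha}B)$ and uniqueness of viscosity solutions applied to $w(t,z)=u^{\phi}(\beta t,\beta^{1/\alpha}z)$, which is precisely the paper's function $v$.
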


\begin{proof}
In light of Theorem \ref{main theorem}, it remains to show that $\tilde{\zeta
}$ satisfies the scaling property. For any given $\phi \in C_{b,Lip}%
(\mathbb{R}^{d})$, Theorem \ref{unique viscosity theorem} implies that
$u(\beta t,0)=\mathbb{\tilde{E}}[\phi(\tilde{\zeta}_{\beta t})]$, where $u$ is
the unique viscosity solution of the PIDE (\ref{PIDE-jump}) with initial
condition $\phi$. Note that for every $\beta>0$,
\[
F_{\mu}(B)=\beta F_{\mu}(\beta^{1/\alpha}B),\text{ \ for }B\in \mathcal{B}%
(\mathbb{R}^{d}).
\]
For any given $0<\beta<1$ and $0\leq t\leq1$, define $v(t,x):=u(\beta
t,\beta^{1/\alpha}x)$. It follows from
\[
\beta \int_{\mathbb{R}^{d}}\delta_{\lambda}u(\beta t,\beta^{1/\alpha}x)F_{\mu
}(d\lambda)=\int_{\mathbb{R}^{d}}\delta_{\lambda}v(t,x)F_{\mu}(d\lambda)
\]
that $v$ is the unique viscosity solution of the PIDE (\ref{PIDE-jump}) with
initial condition $\tilde{\phi}(x):=\phi(\beta^{1/\alpha}x)$. From Theorem
\ref{unique viscosity theorem}, we derive that $v(t,0)=\mathbb{\tilde{E}%
}[\tilde{\phi}(\tilde{\zeta}_{t})]$. Therefore,
\[
\mathbb{\tilde{E}}[\phi(\tilde{\zeta}_{\beta t})]=u(\beta t,0)=v(t,0)=
\mathbb{\tilde{E}}[\tilde{\phi}(\tilde{\zeta}_{t})]=\mathbb{\tilde{E}}%
[\phi(\beta^{1/\alpha}\tilde{\zeta}_{t})],
\]
and the proof is complete.
\end{proof}

\section{Proof of Theorem \ref{main theorem}}

\subsection{The construction of nonlinear L\'{e}vy process}

\label{Section_The construction of Levy process}

Let $\tilde{\Omega}=C_{0}^{d}[0,1]\times C_{0}^{d}[0,1]\times D_{0}^{d}[0,1]$
be the space of all $\mathbb{R}^{d}\times \mathbb{R}^{d}\times \mathbb{R}^{d}%
$-valued paths $(\omega_{t})_{t\in \lbrack0,1]}$ with $\omega_{0}=0$, equipped
with the Skorohod topology, where $C_{0}^{d}[0,1]$ is the space of
$\mathbb{R}^{d}$-valued continuous paths and $D_{0}^{d}[0,1]$ is the space of
$\mathbb{R}^{d}$-valued c\`{a}dl\`{a}g paths. Consider the canonical process
$(\tilde{\xi}_{t},\tilde{\eta}_{t},\tilde{\zeta}_{t})(\omega)=(\omega_{t}
^{1},\omega_{t}^{2},\omega_{t}^{3})$, $t\in \lbrack0,1]$, for $\omega
=(\omega^{1},\omega^{2},\omega^{3})\in \tilde{\Omega}$. Set $\tilde{L}%
_{t}:=(\tilde{\xi}_{t},\tilde{\eta}_{t},\tilde{\zeta}_{t})$ and
\[
Lip(\tilde{\Omega})=\left \{  \varphi(\tilde{L}_{t_{1}},\ldots,\tilde{L}
_{t_{n}}-\tilde{L} _{t_{n-1}}):\forall0\leq t_{1}<t_{2}<\cdots<t_{n}
\leq1,\varphi \in C_{b,Lip}(\mathbb{R}^{n\times3d})\right \}  .
\]
%%\[
%%Lip(\tilde{\Omega})=\{ \varphi(\tilde{L}_{t_{1}},\ldots,\tilde{L}_{t_{m}%
%%}):m\geq1,t_{1},\ldots,t_{m}\in \lbrack0,1],\varphi \in C_{b,Lip}(\mathbb{R}%
%%^{m\times3d})\} \text{.}%
%%\]%

\begin{theorem}
\label{The construction of Levy process}Assume that (A1)-(A2) hold. Then,
there exists a sublinear expectation $\mathbb{\tilde{E}}$ on $(\tilde{\Omega
},Lip(\tilde{\Omega}))$ such that the sequence $\{(n^{-1/2}S_{n}^{1}%
,n^{-1}S_{n}^{2},n^{-1/\alpha}S_{n}^{3})\}_{n=1}^{\infty}$ converges in
distribution to $\tilde{L}_{1}$, where $(\tilde{L}_{t})_{t\in \lbrack0,1]}$ is
a nonlinear L\'evy process on $(\tilde{\Omega},Lip(\tilde{\Omega}),
\mathbb{\tilde{E}})$.
\end{theorem}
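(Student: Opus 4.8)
The plan is to construct the sublinear expectation $\mathbb{\tilde{E}}$ on the path space $(\tilde{\Omega}, Lip(\tilde{\Omega}))$ through a two-step procedure: first establish the existence of the one-time-marginal limit law of the rescaled sums via tightness and weak compactness, and then extend this to a consistent family of finite-dimensional distributions that defines a nonlinear L\'evy process. First I would verify that the sequence of sublinear expectations $\mathbb{F}_{W_n}$ induced by $W_n := (n^{-1/2}S_n^1, n^{-1}S_n^2, n^{-1/\alpha}S_n^3)$ on $(\mathbb{R}^{3d}, C_{b,Lip}(\mathbb{R}^{3d}))$ is tight in the sense of Definition \ref{def_2.8}. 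The tightness of the first two coordinates follows from the moment conditions in (A1) exactly as in Peng's robust central limit theorem (Theorem \ref{LLT+CLT}); for the third coordinate I would use the uniform moment bound $M_z = \sup_n \mathbb{\hat{E}}[n^{-1/\alpha}|S_n^3|] < \infty$ from (A2), which gives a uniform first-moment control and hence tightness via a Markov-type argument with a suitable $\varphi \geq \mathbbm{1}_{\{|z| \geq N\}}$. By Corollary \ref{corollary tight theorem}, some subsequence converges in distribution; the main work is to upgrade subsequential convergence to convergence of the full sequence.

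The key device for promoting subsequential to full convergence is a self-similarity/interpolation argument. For each fixed $t \in [0,1]$, I would study the rescaled sums $W_n^t := ((t/n)^{1/2} S_n^1, (t/n) S_n^2, (t/n)^{1/\alpha} S_n^3)$ as described in step (i) of the proof sketch of Theorem \ref{main theorem}. The natural strategy is to define, along a convergent subsequence, a candidate limit functional and then use the i.i.d. structure together with Proposition \ref{independent copy converge in law} and Remark \ref{ramark i.i.d.} to show the limit is independent of the chosen subsequence. Concretely, writing $S_{2n}^j$ as a sum of two independent copies of $S_n^j$-type blocks and exploiting that the three normalizations $n^{1/2}$, $n$, $n^{1/\alpha}$ are each homogeneous under doubling, one obtains a functional equation forcing the limit law to satisfy the scaling relations built into $G$-distribution and $\alpha$-stability. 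This pins down $\mathbb{\tilde{E}}$ uniquely on the one-dimensional marginals, so every convergent subsequence shares the same limit and the whole sequence converges.

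Having fixed the marginal laws $\mathbb{\tilde{E}}[\varphi(\tilde L_t)]$ for all $t$, I would then define $\mathbb{\tilde{E}}$ on the cylinder functions in $Lip(\tilde\Omega)$ by prescribing the increments $\tilde L_{t_{k}} - \tilde L_{t_{k-1}}$ to be independent (in the sublinear sense) and stationary, with $\tilde L_{t_k} - \tilde L_{t_{k-1}} \overset{d}{=} \tilde L_{t_k - t_{k-1}}$. The consistency of this prescription — that is, that it genuinely defines a sublinear expectation satisfying properties (i)--(iv) of Definition 2.1 and that the three defining properties (i)--(iii) of a nonlinear L\'evy process hold — must be checked. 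Here I would lean on the stationarity and independence of the original i.i.d. increments: the block decomposition $S_{m+n}^j = S_m^j + (S_{m+n}^j - S_m^j)$ with the second block an independent copy of $S_n^j$ transfers, in the limit, to stationary independent increments of $(\tilde L_t)$, using that independence and identical distribution are preserved under convergence in distribution by Proposition \ref{independent copy converge in law}.

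The main obstacle I anticipate is the third (heavy-tailed) coordinate $n^{-1/\alpha} S_n^3$. Unlike the central-limit coordinates, here we only have the \emph{first} moment bound $M_z < \infty$ from (A2), not finite variance, so the standard truncation and second-moment tightness estimates of Peng's approach do not apply directly; I would need the $\alpha \in (1,2)$ structure (so that the first moment exists but the variance need not) together with condition (A3) to control the contribution of large jumps and to ensure the limiting increments are genuinely $\alpha$-stable rather than Gaussian or degenerate. Establishing tightness and the correct scaling simultaneously for this coordinate — while the first two coordinates scale differently — is the delicate point, since $(X_1, Y_1, Z_1)$ may be mutually dependent and one cannot simply treat the three coordinates in isolation. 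I would therefore carry the joint law through the entire argument, using the joint i.i.d. structure and (A1)--(A3) together, rather than factorizing the problem.
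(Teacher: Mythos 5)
Your first step (tightness of the laws of $\bar{S}_n=(n^{-1/2}S_n^1,n^{-1}S_n^2,n^{-1/\alpha}S_n^3)$ via (A1), (A2) and a Markov-type bound, then subsequential convergence) is exactly the paper's Lemma \ref{tight} and Corollary \ref{corollary tight theorem}, and your plan to build $\mathbb{\tilde{E}}$ on cylinder functions from marginals with stationary independent increments is in the spirit of the paper's Appendix construction. The genuine gap is the central claim of your second paragraph: that the doubling decomposition, together with the homogeneity of the three normalizations, yields a functional equation which ``pins down $\mathbb{\tilde{E}}$ uniquely on the one-dimensional marginals,'' so that all subsequential limits coincide. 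This fails for two reasons. First, the doubling identity only relates the limit of $\bar{S}_{n_i}$ to a (further) subsequential limit of $\bar{S}_{n_i/2}$, scaled componentwise; to convert this into the strict stability relation for a \emph{single} law you would already need to know that the limits along $\{n_i\}$ and $\{n_i/2\}$ agree --- which is precisely the subsequence-independence you are trying to prove, so the argument is circular. Second, even granting that every subsequential limit is strictly stable in the sublinear sense, stability does not identify the law: sublinear $\alpha$-stable laws form a large family parametrized by uncertainty sets of L\'evy measures, and already in the classical linear case a distribution can lie in the domain of \emph{partial} attraction of several distinct stable laws, so that different subsequences of $n^{-1/\alpha}\sum_{i\leq n}Z_i$ converge to genuinely different limits, each satisfying the same scaling identity. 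Under (A1)--(A2) alone no scaling argument can rule this out; the identification of the limit is exactly the role of the consistency condition (A3), which your outline relegates to the secondary task of ``controlling large jumps.''

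This is also where the paper's route differs structurally from yours. The paper never argues uniqueness through self-similarity: in its proof of Theorem \ref{The construction of Levy process} it builds the dyadic marginals $L_{1/2^m}$ by \emph{nested} subsequence extraction (which is what makes the family consistent, i.e.\ $L_{1/2^m}$ is in distribution the sum of two independent copies of $L_{1/2^{m+1}}$, as required for the Appendix construction of $\mathbb{\tilde{E}}$ on $Lip(\tilde{\Omega})$), and it defers the convergence of the full sequence to Theorem \ref{unique viscosity theorem}: the key (A3)-based estimate of Theorem \ref{recursive}, the new L\'evy--Khintchine representation of Theorem \ref{represent theorem}, and the uniqueness of viscosity solutions of the PIDE (\ref{1.0}) together show that the law of $\tilde{L}_1$ arising from \emph{any} convergent subsequence equals $u(1,0,0,0)$, hence is subsequence-independent. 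Tellingly, the scaling property of the limit $\tilde{\zeta}$ is obtained in the paper only \emph{a posteriori}, as a corollary of the PIDE characterization (Corollary \ref{corollary atable}); your proposal tries to use it as an input. Without replacing your scaling argument by an identification mechanism of this kind, the proof does not close, and your third step (prescribing the marginal law at every $t\in[0,1]$ and declaring increments independent and stationary) has no well-defined object to start from, since the marginals are not yet unique.
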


\begin{remark}
Theorem \ref{The construction of Levy process} can be regarded as a Donsker
theorem for the nonlinear L\'evy process $(\tilde{L}_{t})_{t\in \lbrack0,1]}$.
\end{remark}

The proof of the above theorem depends on the following lemma.

\begin{lemma}
\label{tight}Assume that (A1)-(A2) hold. For $\phi \in C_{b,Lip}(\mathbb{R}%
^{3d})$, let
\[
\mathbb{\hat{F}}[\phi]:=\sup_{n}\mathbb{\hat{E}}\left[  \phi \left(
\frac{S_{n}^{1}}{\sqrt{n}},\frac{S_{n}^{2}}{n},\frac{S_{n}^{3}}{\sqrt[\alpha
]{n}}\right)  \right]  .
\]
Then, the sublinear expectation $\mathbb{\hat{F}}$ on $(\mathbb{R}%
^{3d},C_{b,Lip}(\mathbb{R}^{3d}))$ is tight.
\end{lemma}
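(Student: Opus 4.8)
The plan is to verify directly the tightness criterion of Definition~\ref{def_2.7}. First observe that $\mathbb{\hat{F}}$ is a genuine sublinear expectation: it is the pointwise supremum over $n$ of the sublinear expectations $\phi\mapsto\mathbb{\hat{E}}[\phi(n^{-1/2}S_{n}^{1},n^{-1}S_{n}^{2},n^{-1/\alpha}S_{n}^{3})]$, and such a supremum inherits monotonicity, constant preservation, sub-additivity and positive homogeneity; it is finite on $C_{b,Lip}(\mathbb{R}^{3d})$ since every such $\phi$ is bounded. The central reduction is that, since $|(x,y,z)|\geq2\sqrt{3}N$ forces $\max\{|x|,|y|,|z|\}\geq2N$, it suffices to control the three rescaled sums separately. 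Concretely, I would fix the Lipschitz cut-off $\psi_{N}(w):=(|w|/N-1)^{+}\wedge1\in C_{b,Lip}(\mathbb{R}^{d})$, which satisfies $\mathbbm{1}_{\{|w|\geq2N\}}\leq\psi_{N}(w)\leq|w|/N$, and set $\varphi(x,y,z):=\psi_{N}(x)+\psi_{N}(y)+\psi_{N}(z)$. Then $\mathbbm{1}_{\{|(x,y,z)|\geq2\sqrt{3}N\}}\leq\varphi$, and sub-additivity together with $\psi_N(w)\le |w|/N$ gives, for every $n$, that $\mathbb{\hat{E}}[\varphi(n^{-1/2}S_{n}^{1},n^{-1}S_{n}^{2},n^{-1/\alpha}S_{n}^{3})]$ is bounded by $N^{-1}\big(\mathbb{\hat{E}}[|n^{-1/2}S_{n}^{1}|]+\mathbb{\hat{E}}[|n^{-1}S_{n}^{2}|]+\mathbb{\hat{E}}[|n^{-1/\alpha}S_{n}^{3}|]\big)$. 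Hence the lemma follows once I establish the three uniform first-moment bounds below and then choose $N$ large.

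Two of the three bounds are immediate. For the third sum, $\sup_{n}\mathbb{\hat{E}}[|n^{-1/\alpha}S_{n}^{3}|]=M_{z}<\infty$ is exactly the hypothesis in (A2), so nothing is needed. For the second sum, the uniform-integrability part of (A1) yields $\mathbb{\hat{E}}[|Y_{1}|]<\infty$ (pick $\gamma$ with $\mathbb{\hat{E}}[(|Y_{1}|-\gamma)^{+}]\leq1$ so that $\mathbb{\hat{E}}[|Y_{1}|]\leq\gamma+1$), whence sub-additivity gives $\mathbb{\hat{E}}[|S_{n}^{2}|]\leq n\,\mathbb{\hat{E}}[|Y_{1}|]$ and therefore $\sup_{n}\mathbb{\hat{E}}[|n^{-1}S_{n}^{2}|]\leq\mathbb{\hat{E}}[|Y_{1}|]<\infty$.

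The substantive step is the first sum, which I would handle through the second moment. The same truncation argument turns the uniform-integrability part of (A1) into $\mathbb{\hat{E}}[|X_{1}|^{2}]<\infty$. I then claim the recursion $\mathbb{\hat{E}}[|S_{n}^{1}|^{2}]\leq\mathbb{\hat{E}}[|S_{n-1}^{1}|^{2}]+\mathbb{\hat{E}}[|X_{1}|^{2}]$. Writing $|S_{n}^{1}|^{2}=|S_{n-1}^{1}|^{2}+2\langle S_{n-1}^{1},X_{n}\rangle+|X_{n}|^{2}$ and using that $X_{n}$ is independent from $S_{n-1}^{1}$, the tower/independence property lets me evaluate the first two terms as $\mathbb{\hat{E}}[g(S_{n-1}^{1})]$ with $g(x)=|x|^{2}+2\mathbb{\hat{E}}[\langle x,X_{1}\rangle]$; the coordinatewise mean-zero condition $\mathbb{\hat{E}}[X_{1}]=\mathbb{\hat{E}}[-X_{1}]=0$ forces $\mathbb{\hat{E}}[\langle x,X_{1}\rangle]\leq\sum_{k}\mathbb{\hat{E}}[x_{k}X_{1}^{(k)}]=0$, so $g(x)\leq|x|^{2}$ and $\mathbb{\hat{E}}[g(S_{n-1}^{1})]\leq\mathbb{\hat{E}}[|S_{n-1}^{1}|^{2}]$ by monotonicity. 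Sub-additivity then absorbs $\mathbb{\hat{E}}[|X_{n}|^{2}]=\mathbb{\hat{E}}[|X_{1}|^{2}]$, and induction gives $\mathbb{\hat{E}}[|S_{n}^{1}|^{2}]\leq n\,\mathbb{\hat{E}}[|X_{1}|^{2}]$. Finally, the H\"{o}lder inequality of Proposition~\ref{Prop^E Plimi}(iii) with $p=q=2$ yields $\sup_{n}\mathbb{\hat{E}}[|n^{-1/2}S_{n}^{1}|]\leq(\mathbb{\hat{E}}[|X_{1}|^{2}])^{1/2}<\infty$, which completes the three bounds and hence tightness.

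The one genuine obstacle is the application of the independence identity to the quadratic map $(a,b)\mapsto|a|^{2}+2\langle a,b\rangle$, which is unbounded and so lies outside the class $C_{b,Lip}$ for which the defining relation of independence is stated. I would resolve this in the standard way: truncate $X_{n}$ (equivalently, approximate the quadratic by $C_{b,Lip}$ functions) and pass to the limit, using the finite second moment $\mathbb{\hat{E}}[|X_{1}|^{2}]<\infty$ to dominate the truncation error, thereby extending the tower property to functions of quadratic growth. Everything else---the cut-off estimate, the bounds for the $Y$- and $Z$-sums, and the H\"{o}lder step---is routine.
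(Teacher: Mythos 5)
Your proposal is correct in substance, but it establishes the crucial uniform bound by a genuinely different argument than the paper. Both proofs share the same outer skeleton: a Lipschitz cut-off dominated by $|w|/N$, a componentwise reduction, and the treatment of the $Y$- and $Z$-sums via subadditivity and (A2). The difference is entirely in the $X$-component, where the crude bound $\mathbb{\hat{E}}[|S_{n}^{1}|/\sqrt{n}]\le\sqrt{n}\,M_{x}$ is not uniform in $n$. The paper resolves this by invoking Peng's robust CLT (Theorem \ref{LLT+CLT}): $\mathbb{\hat{E}}[\tilde{\varphi}_{N}(n^{-1/2}S_{n}^{1},n^{-1}S_{n}^{2})]$ converges as $n\to\infty$ to $\mathbb{\hat{E}}_{1}[\tilde{\varphi}_{N}(\xi_{1},\eta_{1})]$, which decreases to $0$ in $N$ by monotone convergence under the limiting $G$-distribution; thus all large $n$ are controlled by the limit, and only the finitely many $n<n_{0}$ need the crude $\sqrt{n_{0}}$-bound. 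You instead make the bound uniform in $n$ from the outset, via the second-moment recursion $\mathbb{\hat{E}}[|S_{n}^{1}|^{2}]\le\mathbb{\hat{E}}[|S_{n-1}^{1}|^{2}]+\mathbb{\hat{E}}[|X_{1}|^{2}]$ (mean zero plus independence) and H\"{o}lder, yielding $\sup_{n}\mathbb{\hat{E}}[|n^{-1/2}S_{n}^{1}|]\le(\mathbb{\hat{E}}[|X_{1}|^{2}])^{1/2}$. Your route is more elementary and self-contained---tightness no longer rests on the CLT, which is itself a deep theorem---while the paper's route keeps every test function inside $C_{b,Lip}$, where the i.i.d. hypotheses apply verbatim, and costs nothing extra since Theorem \ref{LLT+CLT} is invoked in the very next step of the construction anyway.

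The step you flag---extending the tower property to the quadratic $(a,b)\mapsto|a|^{2}+2\langle a,b\rangle$---is indeed where all the work hides, and your proposed fix needs slightly more than you state. Writing $T_{M}$ for the radial truncation onto the ball of radius $M$, one has $\langle S_{n-1}^{1},X_{n}\rangle\le\langle T_{M}(S_{n-1}^{1}),T_{N}(X_{n})\rangle+(|S_{n-1}^{1}|-M)^{+}|X_{n}|+M(|X_{n}|-N)^{+}$; after the $C_{b,Lip}$ tower property and mean zero, the first and third terms are bounded by $2M\,\mathbb{\hat{E}}[|X_{n}|^{2}]/N$, which vanishes as $N\to\infty$ for fixed $M$. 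But the middle term cannot be killed using only $\mathbb{\hat{E}}[|X_{1}|^{2}]<\infty$, as you propose: under a sublinear expectation, finiteness of the second moment does not imply $\mathbb{\hat{E}}[(|X_{1}|^{2}-\gamma)^{+}]\to0$, since monotone convergence from below fails (take $\mathbb{\hat{E}}=\sup_{n}E_{P_{n}}$ with $P_{n}(|X_{1}|^{2}=n)=1/n=1-P_{n}(|X_{1}|^{2}=0)$). You must use the uniform-integrability half of (A1) itself: splitting on $\{|X_{n}|\le K\}$ gives $\mathbb{\hat{E}}[(|S_{n-1}^{1}|-M)^{+}|X_{n}|]\le K\mathbb{\hat{E}}[|S_{n-1}^{1}|^{2}]/M+\tfrac{1}{2c}\mathbb{\hat{E}}[|S_{n-1}^{1}|^{2}]+c\,\mathbb{\hat{E}}[(|X_{n}|^{2}-K^{2}/2)^{+}]$, which is made arbitrarily small by choosing $c$, then $K$, then $M$, using only the inductive finiteness of $\mathbb{\hat{E}}[|S_{n-1}^{1}|^{2}]$ and (A1). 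With this correction (and the convention, shared implicitly by the paper when it writes $\mathbb{\hat{E}}[|S_{n}^{1}|]\le nM_{x}$, that the moment conditions transfer from $X_{1}$ to $X_{n}$ under identical distribution), your argument closes.
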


\begin{proof}
It is clear that $\mathbb{\hat{F}}$ is a sublinear expectation on
$(\mathbb{R}^{3d},C_{b,Lip}(\mathbb{R}^{3d}))$. Now we show that
$\mathbb{\hat{F}}$ is tight. For any $N>0$, we define
\[
\varphi_{N}(x)=\left \{
\begin{array}
[c]{ll}%
1, & |x|>N,\\
|x|-N+1, & N-1\leq|x|\leq N,\\
0, & |x|<N-1.
\end{array}
\right.
\]
One can easily check that $\varphi_{N}\in C_{b,Lip}(\mathbb{R})$ and
$\mathbbm{1}_{\{|x|>N\}}\leq \varphi_{N}(x)\leq \mathbbm{1}_{\{|x|>N-1\}}$.
Denote $\tilde{\varphi}_{N}(x,y)=\varphi_{N}(x)+\varphi_{N}(y)$. Under the
assumption (A1), from Theorem \ref{LLT+CLT}, we get
\[
\lim_{n\rightarrow \infty}\mathbb{\hat{E}}\left[  \tilde{\varphi}_{N}\left(
\frac{S_{n}^{1}}{\sqrt{n}},\frac{S_{n}^{2}}{n}\right)  \right]  =\mathbb{\hat
{E}}_{1}\big[\tilde{\varphi}_{N}(\xi_{1},\eta_{1})\big],
\]
where $(\xi_{1},\eta_{1})$ is $G$-distributed under another sublinear
expectation $\mathbb{\hat{E}}_{1}$ (possibly different from $\mathbb{\hat{E}}%
$). Noting that $\tilde{\varphi}_{N}\downarrow0$ as $N\rightarrow \infty$, by
Lemma 1.3.4 in Peng [31], we obtain that $\mathbb{\hat{E}}_{1}\big[\tilde
{\varphi}_{N}(\xi_{1},\eta_{1})\big]\downarrow0$ as $N\rightarrow \infty$. So
for each $\varepsilon>0$, there exists large $N_{0}$, such that $\mathbb{\hat
{E}}_{1}\big[\tilde{\varphi}_{N_{0}}(\xi_{1},\eta_{1})\big]<\varepsilon/4$.
Then, we find some large $n_{0}>1$ such that for $n\geq n_{0}$, $\mathbb{\hat
{E}}\big[\tilde{\varphi}_{N_{0}}(n^{-1/2}S_{n}^{1},n^{-1}S_{n}^{2}%
)\big]<\varepsilon/2$. Since $0<\tilde{\varphi}_{N}\leq \tilde{\varphi}_{N_{0}%
}$ for any $N>N_{0}$, it follows that $\mathbb{\hat{E}}\big[\tilde{\varphi
}_{N}(n^{-1/2}S_{n}^{1},n^{-1}S_{n}^{2})\big]<\varepsilon/2$, for any
$N>N_{0}$ and $n\geq n_{0}$. In addition, note that $\tilde{\varphi}%
_{N}(x,y)\leq \frac{1}{N-1}(|x|+|y|)$, which yields that for $n<n_{0}$,
\[
\mathbb{\hat{E}}\left[  \tilde{\varphi}_{N}\left(  \frac{S_{n}^{1}}{\sqrt{n}%
},\frac{S_{n}^{2}}{n}\right)  \right]  \leq \frac{\sqrt{n_{0}}}{N-1}%
(M_{x}+M_{y}).
\]
where $M_{x}:=\mathbb{\hat{E}}[|X_{1}|]$ and $M_{y}:=\mathbb{\hat{E}}%
[|Y_{1}|]$. Thus, by choosing
\[
N>\max \big \{N_{0},2\sqrt{n_{0}}\varepsilon^{-1}(M_{x}+M_{y})+1\big \},
\]
we have $\mathbb{\hat{F}}\big[\tilde{\varphi}_{N}\big]<\varepsilon/2$. On the
other hand, under the assumption (A2), for $N>2\varepsilon^{-1}M_{z}+1$, it
follows that
\[
\mathbb{\hat{F}}[\varphi_{N}(z)]=\sup_{n}\mathbb{\hat{E}}\left[  \varphi
_{N}\left(  \frac{S_{n}^{3}}{\sqrt[\alpha]{n}}\right)  \right]  \leq \frac
{1}{N-1}M_{z}<\varepsilon/2.
\]
Observe that for any $N>0$,
\[
\mathbbm{1}_{\{ \left \vert (x,y,z)\right \vert \geq N\}}\leq \mathbbm{1}_{\{
\left \vert x\right \vert \geq N/\sqrt{3}\}}+\mathbbm{1}_{\{ \left \vert
y\right \vert \geq N/\sqrt{3}\}}+\mathbbm{1}_{\{ \left \vert z\right \vert \geq
N/\sqrt{3}\}}\leq \phi_{N/\sqrt{3}}(x,y,z),
\]
where $\phi_{N}(x,y,z):=\varphi_{N}(x)+\varphi_{N}(y)+\varphi_{N}(z)$.
Therefore, for each $\varepsilon>0$, we choose $N^{\prime}>\sqrt{3}%
\max \big \{N_{0}$, $2\sqrt{n_{0}}\varepsilon^{-1}(M_{x}+M_{y})+1$,
$2\varepsilon^{-1}M_{z}+1\big \}$ and $\phi_{N^{\prime}/\sqrt{3}}(x,y,z)\in
C_{b,Lip}(\mathbb{R}^{3d})$ with $\mathbbm{1}_{\{ \left \vert
(x,y,z)\right \vert \geq N^{\prime}\}}\leq \phi_{N^{\prime}/\sqrt{3}}(x,y,z)$
such that $\mathbb{\hat{F}}\big[\phi_{N^{\prime}/\sqrt{3}}\big]<\varepsilon$.
This proves the desired result.
\end{proof}

\begin{proof}
[Proof of Theorem \ref{The construction of Levy process}]We denote $\bar
{S}_{n}=(n^{-1/2}S_{n}^{1},n^{-1}S_{n}^{2},n^{-1/\alpha}S_{n}^{3})$. Seeing
that $\mathbb{\hat{F}}$ is tight and
\[
\mathbb{\hat{E}}[\phi(\bar{S}_{n})]-\mathbb{\hat{E}}[\phi^{\prime}(\bar{S}%
_{n})]\leq \mathbb{\hat{F}}[\phi-\phi^{\prime}],\text{ for }\phi,\phi^{\prime
}\in C_{b,Lip}(\mathbb{R}^{3d}),
\]
by Corollary \ref{corollary tight theorem}, there exists a subsequence $\{
\bar{S}_{n_{i}}\}_{i=1}^{\infty}\subset \{ \bar{S}_{n}\}_{n=1}^{\infty}$ which
converges in law to some $(\xi_{1},\eta_{1},\zeta_{1})$ in $(\Omega
,\mathcal{H},\mathbb{\hat{E}}_{1})$. By Theorem \ref{LLT+CLT}, we further know
that the marginal distribution $(\xi_{1},\eta_{1})$ is $G$-distributed. For
the above convergent subsequence $\{ \bar{S}_{n_{i}}\}_{i=1}^{\infty}$, it is
clear that for an arbitrarily increasing integers of $\{ \tilde{n}_{i}%
\}_{i=1}^{\infty}$ such that $|\tilde{n}_{i}-n_{i}|\leq1$, both $\{ \bar
{S}_{n_{i}}\}_{i=1}^{\infty}$ and $\{ \bar{S}_{\tilde{n}_{i}}\}_{i=1}^{\infty
}$ converges in law to the same limit. Thus, without loss of generality, we
assume that $n_{i}$, $i=1,2,\ldots$, are all even numbers and decompose into
two parts:
\begin{align*}
&  \bar{S}_{n_{i}}=\left(  \frac{1}{\sqrt{2}}(n_{i}/2)^{-\frac{1}{2}}%
S_{n_{i}/2}^{1},\frac{1}{2}(n_{i}/2)^{-1}S_{n_{i}/2}^{2},\frac{1}%
{\sqrt[\alpha]{2}}(n_{i}/2)^{-\frac{1}{\alpha}}S_{n_{i}/2}^{3}\right) \\
&  +\left(  \frac{1}{\sqrt{2}}(n_{i}/2)^{-\frac{1}{2}}(S_{n_{i}}^{1}%
-S_{n_{i}/2}^{1}),\frac{1}{2}(n_{i}/2)^{-1}(S_{n_{i}}^{2}-S_{n_{i}/2}%
^{2}),\frac{1}{\sqrt[\alpha]{2}}(n_{i}/2)^{-\frac{1}{\alpha}}(S_{n_{i}}%
^{3}-S_{n_{i}/2}^{3})\right) \\
&  \text{\  \  \ }:=\  \bar{S}_{n_{i}/2}^{1/2}+(\bar{S}_{n_{i}}-\bar{S}_{n_{i}%
/2}^{1/2}),
\end{align*}
where $\bar{S}_{n}^{t}:=\big((t/n)^{1/2}S_{n}^{1},(t/n)S_{n}^{2}%
,(t/n)^{1/\alpha}S_{n}^{3}\big)$ for $t\in \lbrack0,1)$. For the first part,
applying the same argument again, we prove that there exists a subsequence
$\big \{ \bar{S}_{n_{i}^{1}/2}^{1/2}\big \}_{i=1}^{\infty}$ $\subset
\big \{ \bar{S}_{n_{i}/2}^{1/2}\big \}_{i=1}^{\infty}\ $such that
$\big \{ \bar{S}_{n_{i}^{1}/2}^{1/2}\big \}_{i=1}^{\infty}$ converging in law
to $(\xi_{1/2},\eta_{1/2},\zeta_{1/2})$. Also, from Theorem \ref{LLT+CLT}, we
have $(\xi_{1/2},\eta_{1/2})\overset{d}{=}(\sqrt{1/2}\xi_{1},(1/2)\eta_{1})$.
Since $\bar{S}_{n_{i}^{1}}-\bar{S}_{n_{i}^{1}/2}^{1/2}$ is an independent copy
of $\bar{S}_{n_{i}^{1}/2}^{1/2}$, by Proposition
\ref{independent copy converge in law}, we know that
\[
\bar{S}_{n_{i}^{1}}-\bar{S}_{n_{i}^{1}/2}^{1/2}\overset{\mathcal{D}%
}{\rightarrow}(\bar{\xi}_{1/2},\bar{\eta}_{1/2},\bar{\zeta}_{1/2})\text{ \ and
\ }\bar{S}_{n_{i}^{1}}\overset{\mathcal{D}}{\rightarrow}(\xi_{1/2}+\bar{\xi
}_{1/2},\eta_{1/2}+\bar{\eta}_{1/2},\zeta_{1/2}+\bar{\zeta}_{1/2}),
\]
where $(\bar{\xi}_{1/2},\bar{\eta}_{1/2},\bar{\zeta}_{1/2})$ is an independent
copy of $(\xi_{1/2},\eta_{1/2},\zeta_{1/2})$. In addition, $\bar{S}_{n_{i}%
^{1}}\overset{\mathcal{D}}{\rightarrow}(\xi_{1},\eta_{1},\zeta_{1})$. Thus
\[
(\xi_{1},\eta_{1},\zeta_{1})\overset{d}{=}(\xi_{1/2}+\bar{\xi}_{1/2}%
,\eta_{1/2}+\bar{\eta}_{1/2},\zeta_{1/2}+\bar{\zeta}_{1/2}).
\]

Repeating the previous procedure for $\bar{S}_{n_{i}^{1}/2}^{1/2}$, we can
define random variable $L_{1/4}:=(\xi_{1/4},\eta_{1/4},\zeta_{1/4})$.
Proceeding in this way, one can obtain $L_{1/2^{m}}:=(\xi_{1/2^{m}}%
,\eta_{1/2^{m}},\zeta_{1/2^{m}})$ in $(\Omega,\mathcal{H},\mathbb{\hat{E}}%
_{1})$, $m\in \mathbb{N}$, such that for each $L_{1/2^{m}}$ there exists a
convergent sequence $\{ \bar{S}_{n_{i}^{m}/2^{m}}^{1/2^{m}}\}_{i=1}^{\infty}$
converging in law to it. Finally, using the random variables $L_{1/2^{m}}%
(m\in \mathbb{N)}$, we may construct a sublinear expectation $\mathbb{\tilde
{E}}$ on $(\tilde{\Omega},Lip(\tilde{\Omega}))$ such that the canonical
process $(\tilde{L}_{t})_{t\in \lbrack0,1]}$ is a nonlinear L\'{e}vy process
(see Appendix for details), and $(\tilde{\xi}_{t},\tilde{\eta}_{t})\overset
{d}{=}(\sqrt{t}\tilde{\xi}_{1},t\tilde{\eta}_{1})$ is a generalized
$G$-Brownian motion (see Peng \cite[Chapter 3]{P2010}).

In the following Theorem \ref{unique viscosity theorem}, we will prove that
the distribution of $\tilde{L}_{1}$ is uniquely determined by $u(1,0,0,0)$,
where $u$ is the unique viscosity solution of the fully nonlinear PIDE
(\ref{1.0}). Thus we can infer that for any $\phi \in C_{b,Lip}(\mathbb{R}%
^{3d})$, $\mathbb{\hat{E}}[\phi(\bar{S}_{n})]\rightarrow u(1,0,0,0)$, as
$n\rightarrow \infty$. The proof is completed.
\end{proof}

\subsection{Estimate for $\alpha$-stable L\'{e}vy measure}

\label{Section_Estimates for stable Levy measure}

Recall that $F_{\mu}$ is the $\alpha$-stable L\'{e}vy measure given in
(\ref{F_mu}), $\mathcal{L}_{0}$ is the set of $\alpha$-stable L\'{e}vy measure
on $\mathbb{R}^{d}$ satisfying (\ref{L_0}), and $\mathcal{L\subset L}_{0}$ is
a nonempty compact convex set. It can be verified by the Sato-type result (see
\cite[Remark 14.4]{Sato1999}) that
\begin{equation}
\mathcal{K}:=\sup \limits_{F_{\mu}\in \mathcal{L}}\int_{\mathbb{R}^{d}}%
|z|\wedge|z|^{2}F_{\mu}(dz)<\infty, \label{F_mu condition 1}%
\end{equation}
and
\begin{equation}
\lim_{\varepsilon \rightarrow0}\mathcal{K}_{\varepsilon}=0\text{ \ for
\ }\mathcal{K}_{\varepsilon}:=\sup \limits_{F_{\mu}\in \mathcal{L}}\int
_{|z|\leq \varepsilon}|z|^{2}F_{\mu}(dz). \label{F_mu condition 2}%
\end{equation}
%(\ref{F_mu condition 1}) means the jumps considered are integrable, and
%(\ref{F_mu condition 2}) guarantees the uniqueness of the viscosity
%solution to the fully nonlinear PIDE (\ref{PIDE}).

\begin{lemma}
\label{Lipschitz}For each $\varphi \in C_{b}^{3}(\mathbb{R}^{d})$, we have for
$z,z^{\prime}\in \mathbb{R}^{d}$,
\[
\sup \limits_{F_{\mu}\in \mathcal{L}}\int_{\mathbb{R}^{d}}\left \vert
\delta_{\lambda}\varphi(z^{\prime})-\delta_{\lambda}\varphi \left(  z\right)
\right \vert F_{\mu}(d\lambda)\leq C|z^{\prime}-z|,
\]
where $C\ $is a constant depending on the bounds of $D^{2}\varphi$,
$D^{3}\varphi$, and $\mathcal{K}$.
\end{lemma}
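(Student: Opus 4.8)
The plan is to bound the integrand pointwise in $\lambda$, uniformly in the base point, and then integrate that bound against $F_\mu$, invoking the integrability condition (\ref{F_mu condition 1}). First I would fix $\varphi\in C_{b}^{3}(\mathbb{R}^{d})$ and $\lambda\in\mathbb{R}^{d}$, and view $\delta_{\lambda}\varphi$ as a function of its base point: write $\psi_{\lambda}(w):=\delta_{\lambda}\varphi(w)=\varphi(w+\lambda)-\varphi(w)-\langle D\varphi(w),\lambda\rangle$. Differentiating in $w$ gives
\[
D\psi_{\lambda}(w)=D\varphi(w+\lambda)-D\varphi(w)-D^{2}\varphi(w)\lambda,
\]
which is exactly the first-order Taylor remainder of the gradient $D\varphi$ at $w$ with increment $\lambda$. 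The whole estimate then reduces to controlling this remainder in a way that is integrable against the $\alpha$-stable measure.

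The key step is a two-sided bound $|D\psi_{\lambda}(w)|\leq C\,(|\lambda|\wedge|\lambda|^{2})$, with $C$ depending only on $\|D^{2}\varphi\|_{\infty}$ and $\|D^{3}\varphi\|_{\infty}$ and uniform in $w$. For the large-jump regime I would use the triangle inequality together with $|D\varphi(w+\lambda)-D\varphi(w)|\leq\|D^{2}\varphi\|_{\infty}|\lambda|$ and $|D^{2}\varphi(w)\lambda|\leq\|D^{2}\varphi\|_{\infty}|\lambda|$ to get $|D\psi_{\lambda}(w)|\leq 2\|D^{2}\varphi\|_{\infty}|\lambda|$. For the small-jump regime I would apply Taylor's theorem to $D\varphi$, obtaining $|D\psi_{\lambda}(w)|\leq\tfrac{1}{2}\|D^{3}\varphi\|_{\infty}|\lambda|^{2}$. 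Since $|\lambda|\wedge|\lambda|^{2}$ equals $|\lambda|^{2}$ for $|\lambda|\leq1$ and $|\lambda|$ for $|\lambda|>1$, selecting the appropriate bound in each regime yields $|D\psi_{\lambda}(w)|\leq C\,(|\lambda|\wedge|\lambda|^{2})$ for all $\lambda$, with $C:=\max\{2\|D^{2}\varphi\|_{\infty},\tfrac{1}{2}\|D^{3}\varphi\|_{\infty}\}$.

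With this in hand, the mean value inequality along the segment from $z$ to $z'$ gives $|\psi_{\lambda}(z')-\psi_{\lambda}(z)|\leq\big(\sup_{w}|D\psi_{\lambda}(w)|\big)|z'-z|\leq C\,(|\lambda|\wedge|\lambda|^{2})\,|z'-z|$. Integrating against $F_{\mu}$ and using (\ref{F_mu condition 1}) then yields
\[
\int_{\mathbb{R}^{d}}\left\vert\delta_{\lambda}\varphi(z')-\delta_{\lambda}\varphi(z)\right\vert F_{\mu}(d\lambda)\leq C\,|z'-z|\int_{\mathbb{R}^{d}}(|\lambda|\wedge|\lambda|^{2})\,F_{\mu}(d\lambda)\leq C\mathcal{K}\,|z'-z|,
\]
and taking the supremum over $F_{\mu}\in\mathcal{L}$ gives the claim, with final constant $C\mathcal{K}$ depending only on the bounds of $D^{2}\varphi$, $D^{3}\varphi$, and on $\mathcal{K}$.

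The only genuine obstacle is matching the singularity of the $\alpha$-stable measure near the origin: because $F_{\mu}$ integrates only $|\lambda|\wedge|\lambda|^{2}$, the crude $O(|\lambda|)$ bound fails to be integrable near $0$, so one truly needs the second-order Taylor cancellation to upgrade to $O(|\lambda|^{2})$ for small jumps. This is precisely where the full $C^{3}$ regularity of $\varphi$ enters, and it is the reason the constant $C$ must depend on $\|D^{3}\varphi\|_{\infty}$. Everything else in the argument is routine.
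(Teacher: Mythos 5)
Your proof is correct and follows essentially the same route as the paper: both rest on the same two-regime estimate (the third-derivative Taylor cancellation giving $O(|\lambda|^{2})$ for small jumps, the second-derivative triangle-inequality bound giving $O(|\lambda|)$ for large jumps) combined with the integrability constant $\mathcal{K}$ from (\ref{F_mu condition 1}). The only difference is organizational: you bound $\sup_{w}|D_{w}\delta_{\lambda}\varphi(w)|$ by $C(|\lambda|\wedge|\lambda|^{2})$ and then apply the mean value inequality, whereas the paper splits the integral into $|\lambda|\leq 1$ and $|\lambda|>1$ and estimates the difference directly via integral-form Taylor remainders; the underlying estimates are identical.
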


\begin{proof}
Note that for $z\in \mathbb{R}^{d}$,
\[
\delta_{\lambda}\varphi \left(  z\right)  =\int_{0}^{1}\langle D\varphi
(z+\theta \lambda)-D\varphi(z),\lambda \rangle d\theta=\int_{0}^{1}\int_{0}%
^{1}\langle D^{2}\varphi(z+\tau \theta \lambda)\lambda,\lambda \rangle \theta
d\tau d\theta.
\]
Then, it follows that
\begin{align*}
&  \int_{|\lambda|\leq1}\left \vert \delta_{\lambda}\varphi(z^{\prime}%
)-\delta_{\lambda}\varphi \left(  z\right)  \right \vert F_{\mu}(d\lambda)\\
&  =\int_{|\lambda|\leq1}\left \vert \int_{0}^{1}\int_{0}^{1}\langle
(D^{2}\varphi(z^{\prime}+\tau \theta \lambda)-D^{2}\varphi(z+\tau \theta
\lambda))\lambda,\lambda \rangle \theta d\tau d\theta \right \vert F_{\mu
}(d\lambda)\\
&  \leq|D^{3}\varphi|_{0}\int_{|\lambda|\leq1}|\lambda|^{2}F_{\mu}%
(d\lambda)|z^{\prime}-z|,
\end{align*}
and
\begin{align*}
&  \int_{|\lambda|>1}\left \vert \delta_{\lambda}\varphi(z^{\prime}%
)-\delta_{\lambda}\varphi \left(  z\right)  \right \vert F_{\mu}(d\lambda)\\
&  =\int_{|\lambda|>1}\left \vert \int_{0}^{1}\langle D\varphi(z^{\prime
}+\theta \lambda)-D\varphi(z+\theta \lambda),\lambda \rangle d\theta-\int_{0}%
^{1}\langle D\varphi(z^{\prime})-D\varphi(z),\lambda \rangle d\theta \right \vert
F_{\mu}(d\lambda)\\
&  \leq2\left \vert D^{2}\varphi \right \vert _{0}\int_{|\lambda|>1}%
|\lambda|F_{\mu}(d\lambda)|z^{\prime}-z|,
\end{align*}
for $z,z^{\prime}\in \mathbb{R}^{d}$, where $|D^{2}\varphi|_{0}:=\sup
_{z\in \mathbb{R}^{d}}|D^{2}\varphi(z)|$ and $|D^{3}\varphi|_{0}:=\sup
_{z\in \mathbb{R}^{d}}|D^{3}\varphi(z)|$. Consequently,
\[
\sup \limits_{F_{\mu}\in \mathcal{L}}\int_{\mathbb{R}^{d}}\left \vert
\delta_{\lambda}\varphi(z^{\prime})-\delta_{\lambda}\varphi \left(  z\right)
\right \vert F_{\mu}(d\lambda)\leq C_{\varphi,\mathcal{K}}|z^{\prime}-z|,
\]
with $C_{\varphi,\mathcal{K}}=(|D^{3}\varphi|_{0}+2\left \vert D^{2}%
\varphi \right \vert _{0})\mathcal{K}$. The proof is completed.
\end{proof}

The following estimate is crucial to our main result.

\begin{theorem}
\label{recursive}Assume that (A2)-(A3) hold. Then, for $\varphi \in C_{b}%
^{3}(\mathbb{R}^{d})$ and $s\in \lbrack0,1]$,
\begin{equation}
\lim_{n\rightarrow \infty}\bigg \vert \mathbb{\hat{E}}\bigg[\varphi \left(
z+(s/n)^{\frac{1}{\alpha}}S_{n}^{3}\right)  -\varphi(z)\bigg]-s\sup
\limits_{F_{\mu}\in \mathcal{L}}\int_{\mathbb{R}^{d}}\delta_{\lambda}
\varphi(z)F_{\mu}(d\lambda)\bigg \vert=o(s),\nonumber
\end{equation}
uniformly on $z\in \mathbb{R}^{d}$, where $o(s)/s\rightarrow0$ as
$s\rightarrow0$.
\end{theorem}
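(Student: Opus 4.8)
The plan is to reduce the statement about the whole sum $S_n^3$ to $n$ applications of the one-step consistency condition (A3) with the small parameter $\tau := s/n$, through a telescoping argument along the partial sums. Throughout, write $\Lambda(w) := \sup_{F_\mu \in \mathcal{L}} \int_{\mathbb{R}^d} \delta_\lambda \varphi(w) F_\mu(d\lambda)$, and note first that $\Lambda$ is Lipschitz: since $|\sup_a f(a) - \sup_a g(a)| \le \sup_a |f(a) - g(a)|$, Lemma \ref{Lipschitz} gives $|\Lambda(w') - \Lambda(w)| \le C|w' - w|$, with $C$ depending only on the bounds of $D^2\varphi$, $D^3\varphi$ and $\mathcal{K}$. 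Since $\varphi(z)$ is constant, the quantity to be estimated is $\hat{\mathbb{E}}[\varphi(z + \tau^{1/\alpha}S_n^3)] - \varphi(z)$, and I would telescope it as
\[
\hat{\mathbb{E}}\big[\varphi(z + \tau^{1/\alpha}S_n^3)\big] - \varphi(z) = \sum_{k=1}^n\Big(\hat{\mathbb{E}}\big[\varphi(z + \tau^{1/\alpha}S_k^3)\big] - \hat{\mathbb{E}}\big[\varphi(z + \tau^{1/\alpha}S_{k-1}^3)\big]\Big).
\]

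For each increment I would use that $Z_k$ is independent from $(Z_1,\ldots,Z_{k-1})$, hence from $S_{k-1}^3$, and that $Z_k \overset{d}{=} Z_1$: the tower property in the definition of independence gives $\hat{\mathbb{E}}[\varphi(z + \tau^{1/\alpha}S_k^3)] = \hat{\mathbb{E}}[g(z + \tau^{1/\alpha}S_{k-1}^3)]$, where $g(w) := \hat{\mathbb{E}}[\varphi(w + \tau^{1/\alpha}Z_1)]$. Applying (A3) with parameter $\tau$ at base point $w$ yields $g(w) = \varphi(w) + \tau\Lambda(w) + R(w)$ with $|R(w)| \le \tau\, l(\tau)$ uniformly in $w$. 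Hence, writing $X_{k-1} := \varphi(z + \tau^{1/\alpha}S_{k-1}^3)$, the $k$-th increment equals $\hat{\mathbb{E}}[X_{k-1} + \tau\Lambda(z + \tau^{1/\alpha}S_{k-1}^3) + R(z + \tau^{1/\alpha}S_{k-1}^3)] - \hat{\mathbb{E}}[X_{k-1}]$.

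The crucial step is to freeze the generator at the fixed point $z$. Since $\tau\Lambda(z)$ is constant, the identity $\hat{\mathbb{E}}[U + c] = \hat{\mathbb{E}}[U] + c$ (constant preservation plus sub-additivity) together with $|\hat{\mathbb{E}}[U] - \hat{\mathbb{E}}[V]| \le \hat{\mathbb{E}}[|U - V|]$ from Proposition \ref{Prop^E Plimi} let me rewrite each increment as $\tau\Lambda(z)$ plus two error terms: one bounded by $\hat{\mathbb{E}}[|R(\cdot)|] \le \tau\, l(\tau)$, and one bounded by $\tau\,\hat{\mathbb{E}}[|\Lambda(z + \tau^{1/\alpha}S_{k-1}^3) - \Lambda(z)|] \le C\tau^{1+1/\alpha}\hat{\mathbb{E}}[|S_{k-1}^3|]$, by the Lipschitz bound. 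Invoking (A2) in the form $\hat{\mathbb{E}}[|S_{k-1}^3|] \le M_z(k-1)^{1/\alpha}$ and summing over $k$, with $n\tau = s$, gives
\[
\Big|\hat{\mathbb{E}}\big[\varphi(z + (s/n)^{1/\alpha}S_n^3) - \varphi(z)\big] - s\Lambda(z)\Big| \le s\,l(s/n) + CM_z\,\tau^{1+1/\alpha}\sum_{k=1}^n (k-1)^{1/\alpha},
\]
uniformly in $z$. Bounding $\sum_{k=1}^n(k-1)^{1/\alpha} \le n^{1+1/\alpha}/(1+1/\alpha)$ turns the last term into $\frac{CM_z}{1+1/\alpha}s^{1+1/\alpha}$. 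Letting $n\to\infty$, the consistency error $s\,l(s/n)$ vanishes since $l(s/n)\to0$, leaving a bound of order $s^{1+1/\alpha}$, which is $o(s)$ as $s\to0$.

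The main obstacle is precisely this freezing of the generator: the naive telescoping produces $\Lambda$ evaluated at the moving random points $z + \tau^{1/\alpha}S_{k-1}^3$ rather than at $z$, and controlling the discrepancy requires both the Lipschitz estimate of Lemma \ref{Lipschitz} and the moment bound $M_z < \infty$ from (A2). It is here that $\alpha \in (1,2)$ is indispensable: the accumulated freezing error has the exact order $s^{1+1/\alpha}$, and the exponent $1+1/\alpha$ strictly exceeds $1$, which is exactly what makes the total error genuinely $o(s)$ rather than merely $O(s)$.
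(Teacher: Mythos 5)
Your proposal is correct and follows essentially the same route as the paper's proof: the telescoping along partial sums is exactly the paper's recursive one-step estimate, with the same ingredients in the same roles (peeling off one $Z_k$ via independence, applying (A3) at the shifted point, freezing the generator at $z$ via Lemma \ref{Lipschitz} plus the moment bound $\hat{\mathbb{E}}[|S_{k-1}^3|]\le M_z(k-1)^{1/\alpha}$ from (A2), then summing to get the error $s\,l(s/n)+CM_z\frac{\alpha}{1+\alpha}s^{1+1/\alpha}$). The only cosmetic differences are that you bound increments two-sidedly via $|\hat{\mathbb{E}}[U]-\hat{\mathbb{E}}[V]|\le\hat{\mathbb{E}}[|U-V|]$ where the paper writes separate upper and lower one-step inequalities, and you bound the Riemann sum by $n^{1+1/\alpha}/(1+1/\alpha)$ where the paper takes its limit $\alpha/(1+\alpha)$ — the same constant.
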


\begin{proof}
Because $Z_{n}$ is independent from $Z_{1},\dots,Z_{n-1}$, we have
\begin{align*}
&  \mathbb{\hat{E}}\bigg[\varphi \left(  z+(s/n)^{\frac{1}{\alpha}}S_{n}%
^{3}\right)  \bigg]-\varphi(z)-s\epsilon(z)\\
&  =\mathbb{\hat{E}}\bigg[\left.  \mathbb{\hat{E}}\bigg[\varphi \left(
z+(s/n)^{\frac{1}{\alpha}}(\omega_{n-1}+Z_{n})\right)  \bigg]\right \vert
_{\substack{z_{1}=Z_{1}\\ \cdots \\z_{n-1}=Z_{n-1}}}\bigg]-s\epsilon
(z)-\varphi(z),
\end{align*}
where
\[
\omega_{n}:=\sum \limits_{k=1}^{n}z_{k}\  \  \  \text{and}\  \  \  \epsilon
(z):=\sup \limits_{F_{\mu}\in \mathcal{L}}\int_{\mathbb{R}^{d}}\delta_{\lambda
}\varphi(z)F_{\mu}(d\lambda).
\]
Thanks to the assumptions (A2)-(A3) and Lemma \ref{Lipschitz}, we deduce that
\begin{align*}
&  \mathbb{\hat{E}}\bigg[\varphi \left(  z+(s/n)^{\frac{1}{\alpha}}%
(\omega_{n-1}+Z_{n})\right)  \bigg]\\
&  =\mathbb{\hat{E}}\bigg[\varphi \left(  z+(s/n)^{\frac{1}{\alpha}}%
(\omega_{n-1}+Z_{n})\right)  -\varphi \left(  z+(s/n)^{\frac{1}{\alpha}}%
\omega_{n-1}\right) \\
&  \text{ \  \ }-\frac{s}{n}\sup \limits_{F_{\mu}\in \mathcal{L}}\int
_{\mathbb{R}^{d}}\delta_{\lambda}\varphi \left(  z+(s/n)^{\frac{1}{\alpha}%
}\omega_{n-1}\right)  F_{\mu}(d\lambda)\bigg]\\
&  \text{ \  \ }+\frac{s}{n}\sup \limits_{F_{\mu}\in \mathcal{L}}\int
_{\mathbb{R}^{d}}\delta_{\lambda}\varphi \left(  z+(s/n)^{\frac{1}{\alpha}%
}\omega_{n-1}\right)  F_{\mu}(d\lambda)\\
&  \text{ \  \ }-\frac{s}{n}\sup \limits_{F_{\mu}\in \mathcal{L}}\int
_{\mathbb{R}^{d}}\delta_{\lambda}\varphi \left(  z\right)  F_{\mu}%
(d\lambda)+\varphi \left(  z+(s/n)^{\frac{1}{\alpha}}\omega_{n-1}\right)
+\frac{s}{n}\epsilon(z)\\
&  \leq \varphi \left(  z+(s/n)^{\frac{1}{\alpha}}\omega_{n-1}\right)  +\frac
{s}{n}\epsilon(z)+\frac{s}{n}l\left(  \frac{s}{n}\right)  +C\left(  \frac
{s}{n}\right)  ^{1+\frac{1}{\alpha}}\left \vert \omega_{n-1}\right \vert ,
\end{align*}
which implies the following one-step estimate
\begin{align*}
&  \mathbb{\hat{E}}\left[  \varphi \left(  z+(s/n)^{\frac{1}{\alpha}}S_{n}%
^{3}\right)  \right] \\
&  \leq \mathbb{\hat{E}}\left[  \varphi \left(  z+(s/n)^{\frac{1}{\alpha}%
}S_{n-1}^{3}\right)  \right]  +\frac{s}{n}\epsilon(z)+\frac{s}{n}l\left(
\frac{s}{n}\right)  +CM_{z}s^{1+\frac{1}{\alpha}}\frac{1}{n}\left(  \frac
{n-1}{n}\right)  ^{\frac{1}{\alpha}}.
\end{align*}
Repeating the above process recursively, we obtain that
\[
\mathbb{\hat{E}}\left[  \varphi \left(  z+(s/n)^{\frac{1}{\alpha}}S_{n}%
^{3}\right)  \right]  \leq \varphi \left(  z\right)  +s\epsilon(z)+sl\left(
\frac{s}{n}\right)  +CM_{z}s^{1+\frac{1}{\alpha}}\frac{1}{n}\sum
\limits_{k=1}^{n-1}\left(  \frac{k}{n}\right)  ^{\frac{1}{\alpha}}.
\]
Analogously, we have
\[
\mathbb{\hat{E}}\left[  \varphi \left(  z+(s/n)^{\frac{1}{\alpha}}S_{n}%
^{3}\right)  \right]  \geq \varphi \left(  z\right)  +s\epsilon(z)-sl\left(
\frac{s}{n}\right)  -CM_{z}s^{1+\frac{1}{\alpha}}\frac{1}{n}\sum
\limits_{k=1}^{n-1}\left(  \frac{k}{n}\right)  ^{\frac{1}{\alpha}}.
\]
Thus,
\[
\lim_{n\rightarrow \infty}\left \vert \mathbb{\hat{E}}\left[  \varphi \left(
z+(s/n)^{\frac{1}{\alpha}}S_{n}^{3}\right)  \right]  -\varphi(z)-s\epsilon
(z)\right \vert \leq CM_{z}s^{1+\frac{1}{\alpha}}\frac{\alpha}{1+\alpha},
\]
where we have used the fact that
\[
\lim_{n\rightarrow \infty}\frac{1}{n}\sum \limits_{k=1}^{n-1}\left(  \frac{k}%
{n}\right)  ^{\frac{1}{\alpha}}=\frac{\alpha}{1+\alpha}.
\]
This implies the desired result.
\end{proof}

\subsection{L\'evy-Khintchine representation of nonlinear L\'evy process}

\label{Section_Representation of Levy process}

In this section, we shall present the characterization of $\lim \limits_{\delta
\rightarrow0}\mathbb{\tilde{E}}[\varphi(\tilde{\zeta} _{\delta})+\langle
p,\tilde{\eta}_{\delta}\rangle+\frac{1}{2}\langle A\tilde{ \xi}_{\delta
},\tilde{\xi}_{\delta}\rangle]\delta^{-1}$ for $\varphi \in C_{b}%
^{3}(\mathbb{R}^{d})$ with $\varphi(0)=0$ and $(p,A)\in \mathbb{R}^{d}%
\times \mathbb{S}(d)$, which can be regarded as a new type of L\'evy-Khintchine
representation for the nonlinear L\'evy process $(\tilde{L}_{t})_{t\in[0,1]}$.
It will play an important role in establishing the related PIDE in Section
\ref{Section_connection PIDE} (see (\ref{1.5})).

For each $N>0$ and $s\in \lbrack0,1]$, under the assumptions (A1)-(A2), Theorem
\ref{The construction of Levy process} shows that there exists a sequence
$\{s_{k}\}_{k=1}^{\infty}\subset \mathcal{D}_{\infty}[0,1]$ satisfying
$s_{k}\downarrow s$ as $k\rightarrow \infty$ and a convergent sequence
$\big \{(s_{k}/n_{i}^{\ast})^{\frac{1}{\alpha}}S_{n_{i}^{\ast}}^{3}%
\big \}_{i=1}^{\infty}$ for each $s_{k}$, such that
\[
\mathbb{\tilde{E}}[|\tilde{\zeta}_{s}|\wedge N]=\lim_{k\rightarrow \infty}%
\lim_{i\rightarrow \infty}\mathbb{\hat{E}}\big[|(s_{k}/n_{i}^{\ast})^{\frac
{1}{\alpha}}S_{n_{i}^{\ast}}^{3}|\wedge N\big]\leq s^{\frac{1}{\alpha}}%
\sup \limits_{n}\mathbb{\hat{E}}\big[|n^{-\frac{1}{\alpha}}S_{n}^{3}|\big].
\]
Define
\begin{equation}
\mathbb{\tilde{E}}[|\tilde{\zeta}_{s}|]:=\lim_{N\rightarrow \infty
}\mathbb{\  \tilde{E}}[|\tilde{\zeta}_{s}|\wedge N]\leq s^{\frac{1}{\alpha}%
}M_{z}. \label{Z moment}%
\end{equation}
Also, for any $\varphi \in C_{b}^{3}(\mathbb{R}^{d})$ and $(s,z)\in
\lbrack0,1]\times \mathbb{R}^{d}$,\ we have
\[
\mathbb{\tilde{E}}[\varphi(z+\tilde{\zeta}_{s})]=\lim_{k\rightarrow \infty}%
\lim_{i\rightarrow \infty}\mathbb{\hat{E}}\big[\varphi(z+(s_{k}/n_{i}^{\ast
})^{\frac{1}{\alpha}}S_{n_{i}^{\ast}}^{3})\big].
\]
Furthermore, under the assumption (A3), it follows from Theorem
\ref{recursive} that
\begin{align}
&  \bigg \vert \mathbb{\tilde{E}}[\varphi(z+\tilde{\zeta}_{s})]-\varphi
(z)-s\sup \limits_{F_{\mu}\in \mathcal{L}}\int_{\mathbb{R}^{d}}\delta_{\lambda
}\varphi(z)F_{\mu}(d\lambda)\bigg \vert \label{Z condition}\\
&  \leq \lim_{k\rightarrow \infty}\bigg \vert \lim_{i\rightarrow \infty
}\mathbb{\hat{E}}\big[\varphi(z+(s_{k}/n_{i}^{\ast})^{\frac{1}{\alpha}%
}S_{n_{i}^{\ast}}^{3})\big]-\varphi(z)-s_{k}\sup \limits_{F_{\mu}\in
\mathcal{L}}\int_{\mathbb{R}^{d}}\delta_{\lambda}\varphi(z)F_{\mu}%
(d\lambda)\bigg \vert=o(s),\nonumber
\end{align}
uniformly on $z\in \mathbb{R}^{d}$.

Consider
\[
\mathfrak{F}_{0}=\left \{  \varphi \in C_{b}^{3}(\mathbb{R}^{d}):\varphi
(0)=0\right \}
\]
and
\[
\mathfrak{F}=\left \{  (\varphi,p,A):\varphi \in \mathfrak{F}_{0}\text{ and }
(p,A)\in \mathbb{R}^{d}\times \mathbb{S}(d)\right \}  .
\]
Obviously, $\mathfrak{F}$ and $\mathfrak{F}_{0}$ are both linear spaces.

\begin{lemma}
\label{exist theorem}Assume that (A1)-(A3) hold. Then, for each $(\varphi
,p,A)\in \mathfrak{F}$, $\lim \limits_{\delta \rightarrow0}\mathbb{\tilde{E}}
\big[\varphi(\tilde{\zeta}_{\delta})+\langle p,\tilde{\eta} _{\delta}\rangle+
\frac{1}{2}\langle A\tilde{\xi}_{\delta},\tilde{\xi}_{\delta}\rangle
\big]\delta^{-1}$ exists.
\end{lemma}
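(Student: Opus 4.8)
The plan is to abbreviate $\Psi(x,y,z):=\varphi(z)+\langle p,y\rangle+\frac{1}{2}\langle Ax,x\rangle$ and $f(\delta):=\mathbb{\tilde{E}}[\Psi(\tilde{\xi}_{\delta},\tilde{\eta}_{\delta},\tilde{\zeta}_{\delta})]$, so the claim is that $\lim_{\delta\rightarrow0}f(\delta)/\delta$ exists. First I would record two a priori facts. Since $(\tilde{\xi}_{t},\tilde{\eta}_{t})\overset{d}{=}(\sqrt{t}\tilde{\xi}_{1},t\tilde{\eta}_{1})$ is a generalized $G$-Brownian motion, the drift–diffusion block scales exactly, $\mathbb{\tilde{E}}[\langle p,\tilde{\eta}_{\delta}\rangle+\tfrac{1}{2}\langle A\tilde{\xi}_{\delta},\tilde{\xi}_{\delta}\rangle]=\delta\,G(p,A)$ with $G(p,A):=\mathbb{\tilde{E}}[\langle p,\tilde{\eta}_{1}\rangle+\tfrac{1}{2}\langle A\tilde{\xi}_{1},\tilde{\xi}_{1}\rangle]$. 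Combining this with (\ref{Z condition}) at $z=0$ (recall $\varphi(0)=0$) and the sublinearity of $\mathbb{\tilde{E}}$ gives the linear a priori bound $|f(\delta)|\leq C\delta$ for small $\delta$, where $C$ depends on $G(\pm p,\pm A)$ and on $\sup_{F_{\mu}\in\mathcal{L}}\int_{\mathbb{R}^d}\delta_{\lambda}\varphi(0)F_{\mu}(d\lambda)$, the latter being finite by (\ref{F_mu condition 1}).

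The core of the argument is an almost-additivity estimate. Exploiting the stationary and independent increments, I would write $\tilde{L}_{s+t}=\tilde{L}_{s}+W$ with $W=(W^{1},W^{2},W^{3})\overset{d}{=}\tilde{L}_{t}$ independent from $\tilde{L}_{s}$, and expand the quadratic and jump terms to obtain the exact identity
\[
\Psi(\tilde{L}_{s}+W)=\Psi(\tilde{L}_{s})+\Psi(W)+g(\tilde{\zeta}_{s},W^{3})+\langle A\tilde{\xi}_{s},W^{1}\rangle,\qquad g(z,w):=\varphi(z+w)-\varphi(z)-\varphi(w).
\]
Conditioning on $\tilde{L}_{s}$ and using independence, the term $\langle A\tilde{\xi}_{s},W^{1}\rangle$ is, for frozen $\tilde{\xi}_{s}$, a linear functional of the centered $G$-normal variable $W^{1}\overset{d}{=}\tilde{\xi}_{t}$, so $\mathbb{\tilde{E}}[\pm\langle Ax,W^{1}\rangle]=0$ and it cancels in both the upper and the lower sublinear bound. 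The mismatch term obeys $|g(z,w)|\leq|D^{2}\varphi|_{0}\,|z|\,|w|$, and after integrating over $W$ and then over $\tilde{L}_{s}$ its contribution is at most $|D^{2}\varphi|_{0}\,\mathbb{\tilde{E}}[|\tilde{\zeta}_{s}|]\,\mathbb{\tilde{E}}[|\tilde{\zeta}_{t}|]\leq|D^{2}\varphi|_{0}M_{z}^{2}(st)^{1/\alpha}$ by (\ref{Z moment}). Since $\mathbb{\tilde{E}}[\Psi(W)]=f(t)$, this yields the two-sided bound
\[
|f(s+t)-f(s)-f(t)|\leq C_{0}\,(st)^{1/\alpha},\qquad C_{0}:=|D^{2}\varphi|_{0}\,M_{z}^{2}.
\]

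I would then finish with a Fekete-type argument, the decisive point being that $\alpha\in(1,2)$ makes $2/\alpha>1$, so the error is superlinear. Taking $s=t=2^{-(k+1)}$ shows $a_{k}:=2^{k}f(2^{-k})$ satisfies $|a_{k}-a_{k+1}|\leq C_{0}2^{-2/\alpha}2^{k(1-2/\alpha)}$, which is summable; hence $a_{k}\rightarrow\ell$ for some $\ell\in\mathbb{R}$ with $|a_{k}-\ell|\leq C2^{-k(2/\alpha-1)}$. To pass from dyadic to arbitrary $\beta\downarrow0$, I would peel off the binary expansion $\beta=\sum_{j\in K}2^{-j}$, $K=\{k_{1}<k_{2}<\cdots\}$, one block at a time: the accumulated almost-additivity error in this telescoping is geometric and bounded by $C2^{-2k_{1}/\alpha}$, while $|f(2^{-j})|\leq C2^{-j}$ ensures absolute convergence of $\sum_{j\in K}f(2^{-j})$. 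Using $f(2^{-j})=\ell2^{-j}+O(2^{-2j/\alpha})$ and $\beta\asymp2^{-k_{1}}$ then gives $|f(\beta)-\ell\beta|\leq C\beta^{2/\alpha}$, i.e. $|f(\beta)/\beta-\ell|\leq C\beta^{2/\alpha-1}\rightarrow0$.

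The main obstacle is the almost-additivity step: isolating the exact decomposition, using the symmetry of the $G$-normal marginal to annihilate the cross term in \emph{both} directions (which is what makes the estimate two-sided rather than merely subadditive), and verifying that these manipulations are legitimate for the unbounded quadratic part of $\Psi$ — this is where the standing moment conditions (A1)--(A2), together with the boundedness of the variance set $\Gamma$ in Remark \ref{remark_G}, are used to guarantee the required $L^{p}_{G}$-integrability and the validity of the tower/independence rule. Everything downstream is bookkeeping that closes precisely because $2/\alpha>1$; once the limit $\ell$ is identified, it will feed into the Lévy–Khintchine representation in Section \ref{Section_Representation of Levy process}.
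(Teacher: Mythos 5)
Your proposal is correct, and although it shares the paper's core decomposition, it completes the argument by a genuinely different mechanism. Both you and the paper write $\tilde{L}_{s+t}=\tilde{L}_{s}+W$ with $W\overset{d}{=}\tilde{L}_{t}$ independent from $\tilde{L}_{s}$, remove the cross term $\langle A\tilde{\xi}_{s},W^{1}\rangle$ by its two-sided mean-zero property, and are left with the same mismatch term: your $g(z,w)$ is exactly the paper's $\tilde{\varphi}(w;z)$, i.e.\ the variable $U$ in the paper's proof of Lemma \ref{exist theorem}. The divergence is in how this mismatch is estimated and how the limit is then extracted. The paper bounds $\mathbb{\tilde{E}}[\pm U]$ by applying the consistency estimate (\ref{Z condition}) to the shifted function $\tilde{\varphi}(\cdot;z_{0})$ together with Lemma \ref{Lipschitz}, obtaining $|f(t_{0}+\delta)-f(t_{0})-f(\delta)|\leq C\delta\sqrt[\alpha]{t_{0}}+o(\delta)$ (linear in $\delta$ with small coefficient); it then uses the Lipschitz continuity of $f$, hence differentiability a.e., compares $f(\delta)/\delta$ with the difference quotient at a differentiability point $t_{0}$, and lets $t_{0}\rightarrow0$. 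You instead bound $g$ by the elementary Taylor inequality $|g(z,w)|\leq|D^{2}\varphi|_{0}|z||w|$ and invoke the moment bound (\ref{Z moment}) twice, obtaining the symmetric, superlinear estimate $|f(s+t)-f(s)-f(t)|\leq C_{0}(st)^{1/\alpha}$, which then feeds a Fekete-type dyadic argument plus binary-expansion interpolation. Notably, the two estimates are not interchangeable: yours gives $\Lambda_{\delta}=O(\delta^{1/\alpha-1})\rightarrow\infty$ at fixed $t_{0}$, so it could not drive the paper's argument, while the paper's uniform but unquantified $o(\delta)$ term is not summable along dyadic scales, so it could not drive yours. What your route buys: it avoids Lebesgue's a.e.-differentiability theorem, localizes the use of (A3) to the a priori bound $|f(\delta)|\leq C\delta$ (the almost-additivity itself needs only (A2)), and yields a rate $|f(\beta)/\beta-\ell|\leq C\beta^{2/\alpha-1}$; the price is the dyadic bookkeeping and the essential use of $2/\alpha>1$. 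Finally, both proofs operate at the same level of informality in applying the tower/independence rule and the scaling identity to unbounded (quadratic) functionals; you flag this explicitly, and the paper glosses over it in the same way via truncation conventions such as (\ref{Z moment}), so this is not a gap relative to the paper's own standard.
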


\begin{proof}
For given $(\varphi,p,A)\in \mathfrak{F}$, define
\[
f(s):=\mathbb{\tilde{E}}\big[\varphi(\tilde{\zeta}_{s})+\langle p,\tilde{\eta}
_{s}\rangle+\frac{1}{2}\langle A\tilde{\xi}_{s},\tilde{\xi}_{s}\rangle \big],
\text{ \ }s\in[0,1].
\]
Clearly, $f(0)=0$. We first claim that $f$ is a Lipschitz function. In fact,
for each $s,\delta \in \lbrack0,1]$, it follows from Proposition
\ref{Prop^E Plimi} that
\[
|f(s+\delta)-f(s)|\leq \mathbb{\tilde{E}}[R]\vee \mathbb{\tilde{E}}[-R],
\]
where%
\begin{align*}
R  &  :=\varphi(\tilde{\zeta}_{s}+\tilde{\zeta}_{s+\delta}-\tilde{\zeta}
_{s})-\varphi(\tilde{\zeta}_{s})+\frac{1}{2}\langle A(\tilde{\xi}_{s+\delta
}-\tilde{\xi}_{s}),\tilde{\xi}_{s+\delta}-\tilde{\xi}_{s}\rangle \\
&  \text{ \  \ }+\langle A\tilde{\xi}_{s},\tilde{\xi}_{s+\delta}-\tilde{\xi}
_{s}\rangle+\langle p,\tilde{\eta}_{s+\delta}-\tilde{\eta}_{s}\rangle.
\end{align*}
By using the independent stationary increments property of $(\tilde{L}
_{t})_{t\in[0,1]}$ and $\mathbb{\tilde{E}}[\tilde{\xi}_{t}]=\mathbb{\tilde{E}%
}[-\tilde{\xi}_{t}]=0$, we obtain
\[
\mathbb{\tilde{E}}[R]=\mathbb{\tilde{E}}\big[\mathbb{\tilde{E}}[\varphi
(z+\tilde{\zeta} _{\delta})-\varphi(z)+\langle p,\tilde{\eta}_{\delta}%
\rangle+\frac{1}{2} \langle A\tilde{\xi}_{\delta},\tilde{\xi}_{\delta}%
\rangle]|_{z=\tilde{\zeta}_{s}}\big].
\]
In view of the estimate (\ref{Z condition}) and Lemma \ref{Lipschitz}, we
derive
\[
\mathbb{\tilde{E}}[R]\leq \mathbb{\tilde{E}}\big[|\langle p,\tilde{\eta
}_{\delta}\rangle+\frac{1}{2}\langle A\tilde{\xi}_{\delta},\tilde{\xi}
_{\delta}\rangle|\big]+\delta \mathbb{\tilde{E}}\Big[\sup \limits_{F_{\mu}%
\in \mathcal{L}}\int_{\mathbb{R}^{d}}|\delta_{\lambda}\varphi(\tilde{\zeta}
_{s})|F_{\mu}(d\lambda)\Big]+o(\delta)\leq C\delta,
\]
where $C>0$ is a constant. Similarly, we have $\mathbb{\tilde{E}}[-R]\leq
C\delta$. Hence, $f(\cdot)$ is differentiable almost everywhere on $[0,1]$. We
assume that for each fixed $t_{0}\in \lbrack0,1]$, $f^{\prime}(t_{0})$ exists.
Using the independent stationary increments property again, by Proposition
\ref{Y independent X}, we derive
\[
\frac{f(\delta)}{\delta}=\frac{f(t_{0}+\delta)-f(t_{0})}{\delta}
-\Lambda_{\delta},
\]
where
\begin{align*}
\Lambda_{\delta}  &  =\delta^{-1}\Big(f(t_{0}+\delta)-\mathbb{\tilde{E}}
\big[ \varphi(\tilde{\zeta}_{t_{0}+\delta}-\tilde{\zeta}_{t_{0}}%
)+\varphi(\tilde{ \zeta}_{t_{0}})+\langle p,\tilde{\eta}_{t_{0}+\delta}%
\rangle \\
&  \text{ \  \ }+\frac{1}{2}\langle A(\tilde{\xi}_{t_{0}+\delta}-\tilde{\xi}
_{t_{0}}),\tilde{\xi}_{t_{0}+\delta}-\tilde{\xi}_{t_{0}}\rangle+\frac{1}{2}
\langle A\tilde{\xi}_{t_{0}},\tilde{\xi}_{t_{0}}\rangle \big]\Big).
\end{align*}
Note that
\[
\frac{1}{2}\langle A\tilde{\xi}_{t_{0}+\delta},\tilde{\xi}_{t_{0}+\delta
}\rangle=\frac{1}{2}\langle A(\tilde{\xi}_{t_{0}+\delta}-\tilde{\xi} _{t_{0}%
}),\tilde{\xi}_{t_{0}+\delta}-\tilde{\xi}_{t_{0}}\rangle+\frac{1}{2} \langle
A\tilde{\xi}_{t_{0}},\tilde{\xi}_{t_{0}}\rangle+\langle A\tilde{\xi}_{t_{0}},
\tilde{\xi}_{t_{0}+\delta}-\tilde{\xi}_{t_{0}}\rangle.
\]
Similar to the above procedure, we deduce that
\[
|\Lambda_{\delta}|\leq(\mathbb{\tilde{E}}[U]\vee \mathbb{\tilde{E}}
[-U])\delta^{-1},
\]
where $U:=\varphi(\tilde{\zeta}_{t_{0}}+\tilde{\zeta}_{t_{0}+\delta}-\tilde{
\zeta}_{t_{0}})-\varphi(\tilde{\zeta}_{t_{0}+\delta}-\tilde{\zeta} _{t_{0}%
})-\varphi(\tilde{\zeta}_{t_{0}})$. For each fixed $z_{0}\in \mathbb{R}^{d}$,
denote
\[
\tilde{\varphi}(z;z_{0}):=\varphi(z+z_{0})-\varphi(z)-\varphi(z_{0}),\text{
for }z\in \mathbb{R}^{d}.
\]
It is easy to check that $\tilde{\varphi}(0;z_{0})=0$, $\delta_{\lambda}
\tilde{\varphi}(z;z_{0})=\delta_{\lambda}\varphi(z+z_{0})-\delta_{\lambda
}\varphi(z)$. Then
\[
\mathbb{\tilde{E}}[U]=\mathbb{\tilde{E}}\Big[\mathbb{\tilde{E}} \big[\tilde
{\varphi}( \tilde{\zeta}_{t_{0}+\delta}-\tilde{\zeta} _{t_{0}};z_{0}%
)\big]\big|_{z_{0}=\tilde{\zeta}_{t_{0}}}\Big] =\mathbb{\tilde{E}%
}\Big[\mathbb{\tilde{E}} \big[\tilde{\varphi}( \tilde{\zeta}_{\delta}%
;z_{0})\big]\big|_{z_{0}=\tilde{\zeta}_{t_{0}}}\Big]
\]
and
\[
\mathbb{\tilde{E}}[U]\leq \delta \mathbb{\tilde{E}}\Big[\sup \limits_{F_{\mu}%
\in \mathcal{L}}\int_{\mathbb{R}^{d}}|\delta_{\lambda}\tilde{\varphi}(0;\tilde{
\zeta}_{t_{0}})|F_{\mu}(d\lambda)\Big]+o(\delta)\leq C\delta \mathbb{\tilde{E}}
[|\tilde{\zeta}_{t_{0}}|]+o(\delta)\leq C\delta \sqrt[\alpha]{t_{0}}
+o(\delta),
\]
where we have used the estimates (\ref{Z moment})-(\ref{Z condition}) and
Lemma \ref{Lipschitz}. Similarly, $\mathbb{\tilde{E}}[-U]\leq C\delta
\sqrt[\alpha]{t_{0}}+o(\delta)$. In turn, $\lim_{\delta \rightarrow0}%
|\Lambda_{\delta}|\leq C\sqrt[\alpha]{t_{0}}$, where $C>0$ is a constant
independent of $t_{0}$. Therefore,
\[
\underset{\delta \downarrow0}{|\lim \sup}f(\delta)\delta^{-1}-\underset{
\delta \downarrow0}{\lim \inf}f(\delta)\delta^{-1}|\leq2C\sqrt[\alpha]{t_{0}} .
\]
By letting $t_{0}\rightarrow0$, the desired result follows.
\end{proof}

Thanks to Lemma \ref{exist theorem}, we can define a functional $\mathbb{F}:$
$\mathfrak{F}\mathbb{\rightarrow R}$ by
\[
\mathbb{F}[(\varphi,p,A)]:=\lim_{\delta \rightarrow0}\mathbb{\tilde{E}}
\big[\varphi(\tilde{\zeta}_{\delta})+\langle p,\tilde{\eta}_{\delta}\rangle+
\frac{1}{2}\langle A\tilde{\xi}_{\delta},\tilde{\xi}_{\delta}\rangle
\big]\delta^{-1}.
\]
It is easy to verify that $\mathbb{F}$ is a sublinear functional, monotone in
$(\varphi,A)\in \mathfrak{F}_{0}\times \mathbb{S}(d)$ in the following sense:
for $\varphi,\varphi^{\prime}\in \mathfrak{F}_{0}$, $p,p^{\prime}\in
\mathbb{R}^{d}$, and $A,A^{\prime}\in \mathbb{S}(d)$,
\[
\left \{
\begin{array}
[c]{l}%
\mathbb{F}[(\varphi+\varphi^{\prime},p+p^{\prime},A+A^{\prime})]\leq
\mathbb{F}[(\varphi,p,A)]+\mathbb{F}[(\varphi^{\prime},p^{\prime},A^{\prime
})],\\
\mathbb{F}[\lambda(\varphi,p,A)]=\lambda \mathbb{F}[(\varphi,p,A)],\  \forall
\lambda \geq0,\\
\mathbb{F}[(\varphi,p,A)]\leq \mathbb{F}[(\varphi^{\prime},p,A^{\prime})],
\text{ if }\varphi \leq \varphi^{\prime}\text{\ and }A\leq A^{\prime}.
\end{array}
\right.
\]

The following can be regarded as a L\'evy-Khintchine representation for the
nonlinear L\'evy process $(\tilde{L}_{t})_{t\in[0,1]}$.

\begin{theorem}
\label{represent theorem}Assume that (A1)-(A3) hold. Then, for each
$(\varphi,p,A)\in \mathfrak{F}$, there exists an uncertainty set $\Theta
\subset \mathcal{L}\times \mathbb{R}^{d}\times \mathbb{S}_{+}(d)$ satisfying
\[
\sup_{(F_{\mu},q,Q)\in \Theta}\left \{  \int_{\mathbb{R}^{d}}|z|\wedge
|z|^{2}F_{\mu}(dz)+|q|+|Q|\right \}  <\infty
\]
such that
\[
\mathbb{F}[(\varphi,p,A)]=\sup_{(F_{\mu},q,Q)\in \Theta}\left \{  \int_{
\mathbb{R}^{d}}\left(  \varphi(z)-\langle D\varphi(0),z\rangle \right)  F_{\mu
}(dz)+\langle p,q\rangle+\frac{1}{2}tr[AQ]\right \}  .
\]

\end{theorem}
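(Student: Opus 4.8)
The plan is to realize $\mathbb{F}$ as the upper envelope of the linear functionals it dominates, and then to identify each such linear functional with a single L\'evy triplet $(F_{\mu},q,Q)$. Since Lemma \ref{exist theorem} guarantees that $\mathbb{F}$ is a finite-valued sublinear functional on the vector space $\mathfrak{F}$, the Hahn-Banach theorem yields a family $\mathcal{Q}$ of linear functionals on $\mathfrak{F}$ with $\mathbb{F}[(\varphi,p,A)]=\sup_{\ell\in\mathcal{Q}}\ell[(\varphi,p,A)]$ and $\ell\leq\mathbb{F}$ for every $\ell\in\mathcal{Q}$. Because $\mathfrak{F}=\mathfrak{F}_{0}\times\mathbb{R}^{d}\times\mathbb{S}(d)$ splits, each $\ell$ decomposes as $\ell[(\varphi,p,A)]=\ell_{0}[\varphi]+\langle p,q_{\ell}\rangle+\frac{1}{2}tr[AQ_{\ell}]$ for a linear functional $\ell_{0}$ on $\mathfrak{F}_{0}$ and some $q_{\ell}\in\mathbb{R}^{d}$, $Q_{\ell}\in\mathbb{S}(d)$. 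It then remains to pin down the two pieces by testing $\mathbb{F}$ along the coordinate directions.

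First I would compute $\mathbb{F}$ on the two axes. Setting $\varphi=0$ and using that $(\tilde{\xi}_{t},\tilde{\eta}_{t})$ is a generalized $G$-Brownian motion, so that $(\tilde{\xi}_{\delta},\tilde{\eta}_{\delta})\overset{d}{=}(\sqrt{\delta}\,\tilde{\xi}_{1},\delta\,\tilde{\eta}_{1})$, the scaling gives $\mathbb{F}[(0,p,A)]=G(p,A)$. By Remark \ref{remark_G}, $G$ is the support function of the bounded closed set $\Gamma\subset\mathbb{R}^{d}\times\mathbb{S}_{+}(d)$; since the linear map $(p,A)\mapsto\langle p,q_{\ell}\rangle+\frac{1}{2}tr[AQ_{\ell}]$ is dominated by $\mathbb{F}[(0,p,A)]=G(p,A)$, the standard support-function characterization of closed convex sets forces $(q_{\ell},Q_{\ell})\in\Gamma$. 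In particular $Q_{\ell}\in\mathbb{S}_{+}(d)$ and $|q_{\ell}|+|Q_{\ell}|$ is bounded uniformly over $\mathcal{Q}$. Setting instead $p=0$, $A=0$ and using the estimate (\ref{Z condition}) at $z=0$ together with $\varphi(0)=0$, I obtain $\mathbb{F}[(\varphi,0,0)]=\sup_{F_{\mu}\in\mathcal{L}}\int_{\mathbb{R}^{d}}(\varphi(z)-\langle D\varphi(0),z\rangle)F_{\mu}(dz)$, which is the dominating bound for $\ell_{0}$.

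The heart of the argument, and the step I expect to be the main obstacle, is to show that every linear $\ell_{0}$ dominated by this last supremum is itself represented by a measure of $\mathcal{L}$, i.e. $\ell_{0}[\varphi]=\int(\varphi(z)-\langle D\varphi(0),z\rangle)F_{\mu}(dz)$ for some $F_{\mu}\in\mathcal{L}$. The plan is to set $g_{\varphi}(z):=\varphi(z)-\langle D\varphi(0),z\rangle$, which is $O(|z|^{2})$ near the origin and bounded-plus-linear at infinity, hence $F_{\mu}$-integrable by (\ref{F_mu condition 1}), and to regard $\Phi_{F_{\mu}}[\varphi]:=\int g_{\varphi}\,dF_{\mu}$ as a bilinear pairing between $\mathcal{L}$ and $\mathfrak{F}_{0}$. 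The map $\iota:F_{\mu}\mapsto\Phi_{F_{\mu}}$ is affine, and using the representation (\ref{F_mu}) of $F_{\mu}$ over the sphere one checks that $\iota$ is weak-$\ast$ continuous, so that $\iota(\mathcal{L})$ is a compact convex subset of the dual of $\mathfrak{F}_{0}$. Then $\mathbb{F}[(\cdot,0,0)]$ is exactly the support function of $\iota(\mathcal{L})$; were $\ell_{0}\notin\iota(\mathcal{L})$, the Hahn-Banach separation theorem would furnish $\varphi\in\mathfrak{F}_{0}$ with $\ell_{0}[\varphi]>\sup_{F_{\mu}\in\mathcal{L}}\Phi_{F_{\mu}}[\varphi]=\mathbb{F}[(\varphi,0,0)]$, contradicting $\ell_{0}\leq\mathbb{F}[(\cdot,0,0)]$. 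The delicate points to verify are that $g_{\varphi}$ genuinely furnishes a well-defined, separating duality and that $\iota(\mathcal{L})$ is closed in the chosen topology, so that the separation argument applies.

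Finally I would assemble the conclusion: to each $\ell\in\mathcal{Q}$ associate the triplet $(F_{\mu}^{\ell},q_{\ell},Q_{\ell})$ produced above and set $\Theta:=\{(F_{\mu}^{\ell},q_{\ell},Q_{\ell}):\ell\in\mathcal{Q}\}\subset\mathcal{L}\times\mathbb{R}^{d}\times\mathbb{S}_{+}(d)$. The envelope representation $\mathbb{F}=\sup_{\ell\in\mathcal{Q}}\ell$ then reads exactly as the claimed formula, and the uniform bound follows from combining $\int_{\mathbb{R}^{d}}|z|\wedge|z|^{2}F_{\mu}(dz)\leq\mathcal{K}$ in (\ref{F_mu condition 1}) with the uniform bound $|q_{\ell}|+|Q_{\ell}|\leq\sup_{(q,Q)\in\Gamma}(|q|+|Q|)<\infty$ coming from $(q_{\ell},Q_{\ell})\in\Gamma$. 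I emphasize that no product structure on $\Theta$ is claimed: the possible correlation between $\tilde{\xi}$, $\tilde{\eta}$, and $\tilde{\zeta}$ is encoded precisely by which triplets $(F_{\mu},q,Q)$ occur jointly in $\mathcal{Q}$.
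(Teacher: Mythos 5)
Your proposal is correct and, for the core step, takes a genuinely different route from the paper. You coincide with the paper up to the envelope representation $\mathbb{F}=\sup_{\ell}\ell$, the decomposition $\ell[(\varphi,p,A)]=\ell_{0}[\varphi]+\langle p,q_{\ell}\rangle+\frac{1}{2}tr[AQ_{\ell}]$, the identification $\mathbb{F}[(0,p,A)]=G(p,A)$ with $(q_{\ell},Q_{\ell})$ pinned by Remark \ref{remark_G}, and the computation $\mathbb{F}[(\varphi,0,0)]=\sup_{F_{\mu}\in\mathcal{L}}\int\delta_{\lambda}\varphi(0)F_{\mu}(d\lambda)$ from (\ref{Z condition}). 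Where you diverge is the representation of $\ell_{0}$: the paper restricts to $\mathfrak{F}_{1}=\{\varphi:D\varphi(0)=0\}$, extends $\tilde{F}_{\theta}$ by Hahn--Banach to the lattice $\mathfrak{R}$, proves regularity of $\mathbb{K}$ (Dini plus tail estimates), invokes Daniell--Stone to produce a representing measure, forces that measure into $\mathcal{L}$ by a minimax argument (Fan/Sion, using compactness and convexity of $\mathcal{L}$), and finally removes the restriction $D\varphi(0)=0$ via the mollified truncations $f_{k}^{\varepsilon}$; you instead build the compensator into the pairing $g_{\varphi}(z)=\varphi(z)-\langle D\varphi(0),z\rangle$ on all of $\mathfrak{F}_{0}$, so that $\mathbb{F}[(\cdot,0,0)]$ is the support function of the compact convex set $\iota(\mathcal{L})$ in the algebraic dual with its weak-$\ast$ topology, and conclude $\ell_{0}\in\iota(\mathcal{L})$ by Hahn--Banach separation. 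Your route is shorter and more conceptual: it eliminates the lattice/Daniell--Stone machinery and the truncation step of the paper's Step 3 in one stroke, while the paper's route additionally produces an explicit representing measure before identifying it with an element of $\mathcal{L}$. Both arguments rest on the same analytic inputs: the estimate (\ref{Z condition}), the bound $|g_{\varphi}(z)|\leq C(|z|\wedge|z|^{2})$ together with (\ref{F_mu condition 1}) (indeed $g_{\varphi}\in\mathfrak{R}$), and---the point you rightly flag as delicate---continuity of $F_{\mu}\mapsto\int g_{\varphi}\,dF_{\mu}$ on $\mathcal{L}$ in the topology rendering $\mathcal{L}$ compact; note the paper needs exactly the same semicontinuity to apply Sion's theorem, and it follows from (\ref{F_mu}) by writing $\int_{\mathbb{R}^{d}}g_{\varphi}\,dF_{\mu}=\int_{S}h_{\varphi}\,d\mu$ with $h_{\varphi}(z)=\int_{0}^{\infty}g_{\varphi}(rz)r^{-1-\alpha}dr$ continuous on $S$, so weak convergence of the spherical measures $\mu$ suffices. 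Two small points to tighten: Remark \ref{remark_G} gives $\Gamma$ bounded and closed but not necessarily convex, so your support-function argument places $(q_{\ell},Q_{\ell})$ only in the closed convex hull of $\Gamma$ (which is harmless, being still bounded and contained in $\mathbb{R}^{d}\times\mathbb{S}_{+}(d)$); and you should record that the weak-$\ast$ topology on the algebraic dual of $\mathfrak{F}_{0}$ is Hausdorff and locally convex with dual $\mathfrak{F}_{0}$, so that strict separation indeed returns a test function $\varphi\in\mathfrak{F}_{0}$.
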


\begin{proof}
From the representation theorem of sublinear functional (see, for instance
\cite[Theorem 1.2.1]{P2010}), there exists a family of linear functionals
$F_{\theta}:\mathfrak{F}\mathbb{\rightarrow R}$ indexed by $\theta \in \Theta$,
such that
\begin{equation}
\mathbb{F}[(\varphi,p,A)]=\sup_{\theta \in \Theta}F_{\theta}[(\varphi,p,A)].
\label{2.6}%
\end{equation}

Since $F_{\theta}$ is a linear functional, then $F_{\theta}[(\varphi
,p,A)]=F_{\theta}[(\varphi,0,0)]+F_{\theta}[(0,p,A)]$. It is easily seen that
for any $(p,A)\in \mathbb{R}^{d}\times \mathbb{S}(d)$, there exists a $(q,Q)$
belonging to a bounded and closed subset $\Gamma \subset \mathbb{R}^{d}%
\times \mathbb{S}_{+}(d)$ such that
\begin{equation}
F_{\theta}[(0,p,A)]=\langle p,q\rangle+\frac{1}{2}tr[AQ]. \label{2.7}%
\end{equation}
See, for example, Remark \ref{remark_G}. On the other hand, define
\[
\tilde{F}_{\theta}[\varphi]:=F_{\theta}[(\varphi,0,0)],\text{ \ for any }
\varphi \in \mathfrak{F}_{0}.
\]
From the monotone property of $\mathbb{F}$, one can check that $\tilde
{F}_{\theta}$ is a positive linear functional. Moreover, it follows from
(\ref{Z condition}) that
\begin{align*}
\mathbb{F}[(\varphi,0,0)]  &  =\lim_{\delta \rightarrow0}\bigg(\mathbb{\tilde
{E}} [\varphi(\tilde{\zeta}_{\delta})]-\delta \sup \limits_{F_{\mu}%
\in \mathcal{L}}\int_{\mathbb{R}^{d}}\delta_{\lambda}\varphi(0)F_{\mu}%
(d\lambda) \bigg)\delta^{-1}+\sup \limits_{F_{\mu}\in \mathcal{L}}%
\int_{\mathbb{R} ^{d}}\delta_{\lambda}\varphi(0)F_{\mu}(d\lambda)\\
&  =\sup \limits_{F_{\mu}\in \mathcal{L}}\int_{\mathbb{R}^{d}}\delta_{\lambda
}\varphi(0)F_{\mu}(d\lambda),
\end{align*}
which yields that
\begin{equation}
\tilde{F}_{\theta}[\varphi]\leq \mathbb{F}[(\varphi,0,0)]= \sup \limits_{F_{\mu
}\in \mathcal{L}}\int_{\mathbb{R}^{d}}\delta_{\lambda} \varphi(0)F_{\mu
}(d\lambda),\text{ for }\varphi \in \mathfrak{F}_{0}. \label{2.0}%
\end{equation}
In the following, we shall give the representation of $\tilde{F}_{\theta
}[\varphi]$ for any $\varphi \in \mathfrak{F}_{0}$. The proof is divided into
the following three steps.

Step 1. We introduce a linear space
\[
\mathfrak{R}=\left \{  \varphi \in C_{Lip}(\mathbb{R}^{d}):\exists C>0,
|\varphi(z)|\leq C(|z|\wedge|z|^{2})\right \}  .
\]
It is clear that $\mathfrak{R}$ is a vector lattice, that is, if $\varphi
\in \mathfrak{R}$ then $|\varphi|\in \mathfrak{R}$ and $\varphi \wedge
1\in \mathfrak{R}$. Define a sublinear functional $\mathbb{K}[\cdot]$ on
$\mathfrak{R}$ by
\[
\mathbb{K}[\varphi]=\sup \limits_{F_{\mu}\in \mathcal{L}} \int_{\mathbb{R}^{d}%
}\varphi(z)F_{\mu}(dz),\text{ for }\varphi \in \mathfrak{R}.
\]

We claim that the functional $\mathbb{K}[\cdot]$ is regular, that is, if for
each $\{ \varphi_{n}\}_{n=1}^{\infty}$ in $\mathfrak{R}$ such that
$\varphi_{n}\downarrow0$ as $n\rightarrow \infty$, then $\mathbb{K}[\varphi
_{n}]\downarrow0$ as $n\rightarrow \infty$. Indeed, for each fixed
$0<\gamma_{1}<1<\gamma_{2}<\infty$, we have for\ $z\in \mathbb{R}^{d}$,
\[
\varphi_{n}(z)\leq \varphi_{1}(z)\mathbbm{1}_{\{|z|\leq \gamma_{1}\}}%
+\varphi_{n}(z)\mathbbm{1}_{\{ \gamma_{1}\leq|z|\leq \gamma_{2}\}}+\varphi
_{1}(z)\mathbbm{1}_{\{|z|\geq \gamma_{2}\}},
\]
then
\[
\mathbb{K}[\varphi_{n}]\leq \ C\sup \limits_{F_{\mu}\in \mathcal{L}}%
\int_{\{|z|\leq \gamma_{1}\}}|z|^{2}F_{\mu}(dz)+\sup \limits_{F_{\mu}%
\in \mathcal{L}}\int_{\{ \gamma_{1}\leq|z|\leq \gamma_{2}\}}\varphi_{n}%
(z)F_{\mu}(dz)+C\frac{\overline{\Lambda}}{\alpha-1}\gamma_{2}^{1-\alpha},
\]
where we have used the fact that (cf. \cite[Remark 14.4]{Sato1999})%
\[
\sup \limits_{F_{\mu}\in \mathcal{L}}\int_{\{|z|\geq \gamma_{2}\}}|z|F_{\mu
}(dz)=\sup \limits_{F_{\mu}\in \mathcal{L}}\int_{S}\mu(d\beta)\int_{0}^{\infty
}\mathbbm{1}_{\{|r\beta|\geq \gamma_{2}\}}|r\beta|\frac{dr}{r^{\alpha+1}}%
\leq \frac{\overline{\Lambda}}{\alpha-1}\gamma_{2}^{1-\alpha}.
\]
Since $\varphi_{n}\downarrow0$ as $n\rightarrow \infty$, from Dini's theorem we
know $\sup \limits_{\gamma_{1}\leq|x|\leq \gamma_{2}}\varphi_{n}(x)\downarrow0$,
as $n\rightarrow \infty$, which implies that
\begin{align*}
\sup \limits_{F_{\mu}\in \mathcal{L}}\int_{\{ \gamma_{1}\leq|z|\leq \gamma_{2}%
\}}\varphi_{n}(z)F_{\mu}(dz)  &  \leq \sup_{\gamma_{1}\leq|x|\leq \gamma_{2}%
}\varphi_{n}(x)\sup \limits_{F_{\mu}\in \mathcal{L}}\int_{\mathbb{R}^{d}%
}\mathbbm{1}_{\{ \gamma_{1}\leq|z|\leq \gamma_{2}\}}F_{\mu}(dz)\\
&  \leq \frac{\overline{\Lambda}}{\alpha}(\gamma_{1}^{-\alpha}-\gamma
_{2}^{-\alpha})\sup_{\gamma_{1}\leq|x|\leq \gamma_{2}}\varphi_{n}%
(x)\rightarrow0,\text{ \ as }n\rightarrow \infty.
\end{align*}
Therefore, we have
\[
\lim_{n\rightarrow \infty}\mathbb{K}[\varphi_{n}]\leq \ C\sup \limits_{F_{\mu}%
\in \mathcal{L}}\int_{\{|z|\leq \gamma_{1}\}}|z|^{2}F_{\mu}(dz)+C\frac
{\overline{\Lambda}}{\alpha-1}\gamma_{2}^{1-\alpha}.
\]
In view of (\ref{F_mu condition 2}), the claim follows by letting $\gamma
_{1}\rightarrow0$ and $\gamma_{2}\rightarrow \infty$.

Step 2. Set
\[
\mathfrak{F}_{1}\mathfrak{=}\left \{  \varphi \in \mathfrak{F}_{0}:D\varphi
(0)=0\right \}  .
\]
Note that, $\mathfrak{F}_{1}\subset \mathfrak{F}_{0}$ and $\mathfrak{F}
_{1}\subset \mathfrak{R}$. Figure 1 illustrates the three sets $\mathfrak{R}%
,\mathfrak{F}_{0}$ and $\mathfrak{F}_{1}$.

\begin{figure}[h]
\caption{The three sets $\mathfrak{R},\mathfrak{F}_{0}$ and $\mathfrak{F}_{1}%
$.}%
\centering
%Requires \usepackage{graphicx}
\includegraphics[width=.4\textwidth]{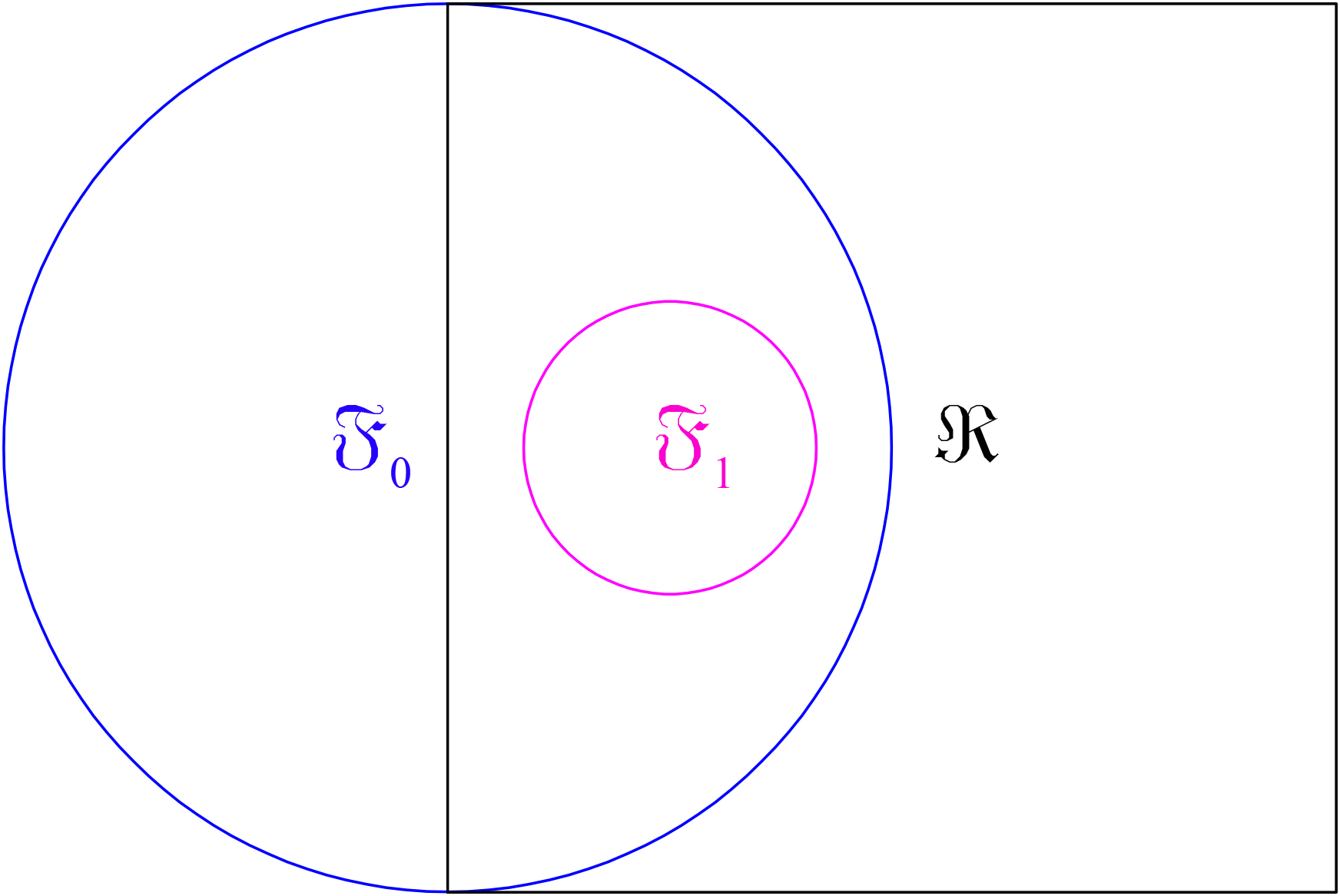}\end{figure}

For any $\varphi \in \mathfrak{F}_{1}$, using (\ref{2.0}) and $\delta_{\lambda
}\varphi(0)=\varphi(\lambda)$, we have
\[
\tilde{F}_{\theta}[\varphi]\leq \mathbb{K}[\varphi],\text{ for }\varphi
\in \mathfrak{F}_{1}.
\]
By Hahn-Banach theorem, we extend the linear functional $\tilde{F}_{\theta
}:\mathfrak{F}_{1}\rightarrow \mathbb{R}$ to $\tilde{F}_{\theta}:
\mathfrak{R}\rightarrow \mathbb{R}$ such that $\tilde{F}_{\theta}[\varphi]
\leq \mathbb{K}[\varphi]$ for $\varphi \in \mathfrak{R}$. Here we still use
$\tilde{F}_{\theta}$ for notation simplicity. Since $\mathbb{K}[\cdot]$ is
regular, it follows that
%$\tilde{F}_{\theta}$ is a Daniell integral on the vector lattice $\mathfrak{R}$, that is,
$\tilde{F}_{\theta}$ is a positive linear functional satisfying $\tilde
{F}_{\theta}[\varphi_{n}]\downarrow0$ for each $\varphi_{n}\in \mathfrak{R}$
such that $\varphi_{n}\downarrow0$. In turn, Daniell-Stone theorem implies
that there exists a unique measure $v$ on $\big(\mathbb{R}^{d}\backslash
\{0\},\mathcal{B}(\mathbb{R}^{d}\backslash \{0\})\big)$ such that
\[
\tilde{F}_{\theta}[\varphi]=\int_{\mathbb{R}^{d}\backslash \{0\}}%
\varphi(z)v(dz)\text{, for }\varphi \in \mathfrak{R}.
\]
%In fact, it is easy to check that $\sigma(\varphi|\varphi
%\in \mathfrak{R}|_{\mathbb{R}^{d}\backslash \{0\}})=B(\mathbb{R}^{d}%
%\backslash \{0\})$, where $\sigma(\varphi|\varphi \in \mathfrak{R})$ is the smallest $\sigma$-field
%with respect to which all the functional $\varphi \in \mathfrak{R}$ is
%measurable.
We claim that there exists some $F_{\mu}\in \mathcal{L}$\ such that
\begin{equation}
\tilde{F}_{\theta}[\varphi]=\int_{\mathbb{R}^{d}}\varphi(z)F_{\mu}(dz)\text{ ,
for }\varphi \in \mathfrak{R}. \label{2.3}%
\end{equation}
Suppose not. For any $F_{\mu}\in \mathcal{L}$, there exists some $\varphi
_{0}\in \mathfrak{R}$\ such that
\[
h(\varphi_{0},F_{\mu}):=\tilde{F}_{\theta}[\varphi_{0}]-\int_{\mathbb{R} ^{d}%
}\varphi_{0}(z)F_{\mu}(dz)\neq0.
\]
Without loss of generality we may assume $h(\varphi_{0},F_{\mu})>0$. Since
$k\varphi_{0}\in \mathfrak{R}$ for $k>0$, we show that for any $F_{\mu}%
\in \mathcal{L}$, $h(k\varphi_{0},F_{\mu})\rightarrow \infty$ as $k\rightarrow
\infty$. Note that $\mathcal{L}$ is a compact convex set, it follows
immediately from minimax theorem (cf. \cite{Fan1953,Sion1958}) that
\[
\sup_{\varphi \in \mathfrak{R}}\inf_{F_{\mu}\in \mathcal{L}}h(\varphi,F_{\mu
})=\inf_{F_{\mu}\in \mathcal{L}}\sup_{\varphi \in \mathfrak{R}}h(\varphi,F_{\mu
})=\infty.
\]
However, seeing that, $\tilde{F}_{\theta}[\varphi]-\mathbb{K}[\varphi]\leq0$
for any $\varphi \in \mathfrak{R}$, then we have
\[
\sup_{\varphi \in \mathfrak{R}}\inf_{F_{\mu}\in \mathcal{L}}h(\varphi,F_{\mu
})\leq0,
\]
which induces a contradiction.

Step 3. For each $k>1$ and $i=1,\ldots,d$, We define $f_{k}^{i}(z)=(z_{i}%
\wedge k)\vee(-k)$ for $z=(z_{1},\ldots,z_{d})\in \mathbb{R}^{d}$. Let
$f_{k}^{i,\varepsilon}(z):=\rho_{\varepsilon}\ast f_{k}^{i}(z)$,
$i=1,\ldots,d$, be the smooth function with the mollifier $\rho_{\varepsilon}$
(see Appendix C.5 in \cite{E2010}) and $f_{k}^{\varepsilon}:=(f_{k}%
^{1,\varepsilon},\ldots,f_{k}^{d,\varepsilon})$. Clearly, $f_{k}%
^{i,\varepsilon}\in \mathfrak{F}_{0}$, $Df_{k}^{i,\varepsilon}(0)=e_{i}$, where
$e_{i}$ is the unit vector with the $i$th component 1, and
\begin{equation}
|f_{k}^{\varepsilon}(z)-z|\leq C|z|\mathbbm{1}_{\{|z|\geq k\}}\text{ \ with
some }C>0. \label{2.4}%
\end{equation}
Note that, for each $\varphi \in \mathfrak{F}_{0}$,
\begin{equation}
\tilde{F}_{\theta}[\varphi]=\tilde{F}_{\theta}[\varphi-\langle D\varphi
(0),f_{k}^{\varepsilon}\rangle]+\tilde{F}_{\theta}[\langle D\varphi
(0),f_{k}^{\varepsilon}\rangle]. \label{2.5}%
\end{equation}
Since $\varphi-\langle D\varphi(0),f_{k}^{\varepsilon}\rangle \in
\mathfrak{F}_{1}$, from (\ref{2.3}), we know that there exists some $F_{\mu
}\in \mathcal{L}$ such that
\[
\tilde{F}_{\theta}[\varphi-\langle D\varphi(0),f_{k}^{\varepsilon}%
\rangle]=\int_{\mathbb{R}^{d}}(\varphi(z)-\langle D\varphi(0),f_{k}%
^{\varepsilon}(z)\rangle)F_{\mu}(dz).
\]
Besides, using (\ref{2.0}) and (\ref{2.4}), we derive that%
\[
\tilde{F}_{\theta}[\langle D\varphi(0),f_{k}^{\varepsilon}\rangle]\leq
\sup \limits_{F_{\mu}\in \mathcal{L}}\int_{\mathbb{R}^{d}}\langle D\varphi
(0),f_{k}^{\varepsilon}(z)-z\rangle F_{\mu}(dz)\leq C\sup \limits_{F_{\mu}%
\in \mathcal{L}}\int_{\{|z|\geq k\}}|z|F_{\mu}(dz)\rightarrow0
\]
as $k\rightarrow \infty$, and similarly, $\tilde{F}_{\theta}[\langle
D\varphi(0),-f_{k}^{\varepsilon}\rangle]\rightarrow0$ as $k\rightarrow \infty$.
This implies that, $|\tilde{F}_{\theta}[\langle D\varphi(0),f_{k}%
^{\varepsilon}\rangle]|\rightarrow0$ as $k\rightarrow \infty$. In addition,
$\int_{\mathbb{R}^{d}}\langle D\varphi(0),z-f_{k}^{\varepsilon}\rangle F_{\mu
}(dz)\rightarrow0$ as $k\rightarrow \infty$. Therefore, by letting
$k\rightarrow \infty$ in (\ref{2.5}), we obtain that
\begin{equation}
\tilde{F}_{\theta}[\varphi]=\int_{\mathbb{R}^{d}}(\varphi(z)-\langle
D\varphi(0),z\rangle)F_{\mu}(dz)\text{, for }\varphi \in \mathfrak{F}_{0}.
\label{2.8}%
\end{equation}
Together with (\ref{2.6})-(\ref{2.7}), the conclusion follows.
\end{proof}

\subsection{Connection to PIDE}

\label{Section_connection PIDE}

In this section, we relate the nonlinear L\'{e}vy process $(\tilde{L}%
_{t})_{t\in \lbrack0,1]}$ to the fully nonlinear PIDE (\ref{PIDE}). Let
$C_{b}^{2,3}([0,1]\times \mathbb{R}^{3d})$ denote the set of functions on
$[0,1]\times \mathbb{R}^{3d}$ having bounded continuous partial derivatives up
to the second order in $t$ and third order in $x,y,z$, respectively. Now we
give the definition of viscosity solution for PIDE (\ref{PIDE}).

\begin{definition}
A bounded upper semicontinuous (resp. lower semicontinuous) function $u$ on
$[0,1]\times \mathbb{R}^{3d}$ is called a viscosity subsolution (resp.
viscosity supersolution) of (\ref{PIDE}) if $u(0,\cdot,\cdot,\cdot)\leq
\phi(\cdot,\cdot,\cdot)$ $($resp. $\geq \phi(\cdot,\cdot,\cdot))$ and for each
$(t,x,y,z)\in(0,1]\times \mathbb{R}^{3d}$,
\[%
\begin{array}
[c]{l}%
\displaystyle \partial_{t}\psi(t,x,y,z)-\sup \limits_{(F_{\mu},q,Q)\in \Theta
}\left \{  \int_{\mathbb{R}^{d}}\delta_{\lambda}\psi(t,x,y,z)F_{\mu}%
(d\lambda)\right. \\
\displaystyle \text{\  \  \  \  \  \  \  \  \  \  \  \  \  \  \ }\left.  +\langle D_{y}%
\psi(t,x,y,z),q\rangle+\frac{1}{2}tr[D_{x}^{2}\psi(t,x,y,z)Q]\text{ }\right \}
\leq0\text{ }(\text{resp. }\geq0)
\end{array}
\]
whenever $\psi \in C_{b}^{2,3}((0,1]\times \mathbb{R}^{3d})$ is such that
$\psi \geq u$ (resp. $\psi \leq u$) and $\psi(t,x,y,z)=u(t,x,y,z)$. A bounded
continuous function $u$ is a viscosity solution of (\ref{PIDE}) if it is both
a viscosity subsolution and supersolution.
\end{definition}

For each $\phi \in C_{b,Lip}(\mathbb{R}^{3d})$, define
\begin{equation}
u(t,x,y,z)=\mathbb{\tilde{E}}[\phi(x+\tilde{\xi}_{t},y+\tilde{\eta}
_{t},z+\tilde{\zeta}_{t})],\text{ }(t,x,y,z)\in \lbrack0,1]\times
\mathbb{R}^{3d}. \label{u}%
\end{equation}

\begin{theorem}
\label{unique viscosity theorem}Suppose that assumptions (A1)-(A3) hold. Then,
the value function $u$ of (\ref{u}) is the unique viscosity solution of the
fully nonlinear PIDE (\ref{PIDE}), i.e.,
\begin{equation}
\left \{
\begin{array}
[c]{l}%
\displaystyle \partial_{t}u(t,x,y,z)-\sup \limits_{(F_{\mu},q,Q)\in \Theta
}\left \{  \int_{\mathbb{R}^{d}}\delta_{\lambda}u(t,x,y,z)F_{\mu}%
(d\lambda)\right. \\
\displaystyle \text{\  \  \  \  \  \  \  \  \  \  \  \  \  \  \ }\left.  +\langle
D_{y}u(t,x,y,z),q\rangle+\frac{1}{2}tr[D_{x}^{2}u(t,x,y,z)Q]\right \}  =0,\\
\displaystyle u(0,x,y,z)=\phi(x,y,z),\text{ \ }\forall(t,x,y,z)\in
\lbrack0,1]\times \mathbb{R}^{3d},
\end{array}
\right.  \label{1.0}%
\end{equation}
where $\delta_{\lambda}u(t,x,y,z)=u(t,x,y,z+\lambda)-u(t,x,y,z)-\langle
D_{z}u(t,x,y,z),\lambda \rangle$.
\end{theorem}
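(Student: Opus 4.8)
The plan is to identify $u$ in (\ref{u}) as a viscosity solution of (\ref{1.0}) by combining the dynamic programming principle (DPP) inherited from the nonlinear L\'evy structure of $(\tilde{L}_{t})_{t\in[0,1]}$ established in Theorem \ref{The construction of Levy process} with the L\'evy--Khintchine representation of Theorem \ref{represent theorem}, and then to obtain uniqueness from a comparison principle for (\ref{1.0}). First I would record the regularity of $u$: since $\phi\in C_{b,Lip}(\mathbb{R}^{3d})$ and $\mathbb{\tilde{E}}$ is non-expansive (Proposition \ref{Prop^E Plimi}(ii)), $u$ is bounded and Lipschitz in $(x,y,z)$ uniformly in $t$; using stationary independent increments together with the scaling estimates $\mathbb{\tilde{E}}[|\tilde{\xi}_{r}|]\leq C\sqrt{r}$, $\mathbb{\tilde{E}}[|\tilde{\eta}_{r}|]\leq Cr$ and $\mathbb{\tilde{E}}[|\tilde{\zeta}_{r}|]\leq r^{1/\alpha}M_{z}$ from (\ref{Z moment}), one gets $|u(t,\cdot)-u(s,\cdot)|\leq C|t-s|^{1/2}$, so $u$ is jointly continuous, with $u(0,\cdot)=\phi$ since $\tilde{L}_{0}=0$. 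The DPP reads $u(t,x,y,z)=\mathbb{\tilde{E}}[u(t-\delta,x+\tilde{\xi}_{\delta},y+\tilde{\eta}_{\delta},z+\tilde{\zeta}_{\delta})]$ for $0<\delta\leq t$, and follows by decomposing $\tilde{L}_{t}=\tilde{L}_{\delta}+(\tilde{L}_{t}-\tilde{L}_{\delta})$ and invoking independence and stationarity of the increment.

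The heart of the argument is the subsolution property, the supersolution property being symmetric. Fix $(t_{0},x_{0},y_{0},z_{0})$ with $t_{0}>0$ and $\psi\in C_{b}^{2,3}$ with $\psi\geq u$ and $\psi(t_{0},x_{0},y_{0},z_{0})=u(t_{0},x_{0},y_{0},z_{0})$. Writing $P_{\delta}=(x_{0}+\tilde{\xi}_{\delta},y_{0}+\tilde{\eta}_{\delta},z_{0}+\tilde{\zeta}_{\delta})$ and $P_{0}=(x_{0},y_{0},z_{0})$, the DPP and $\psi\geq u$ give $0\leq\mathbb{\tilde{E}}[\psi(t_{0}-\delta,P_{\delta})]-\psi(t_{0},P_{0})$. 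I would Taylor expand. The time slot contributes $-\delta\,\partial_{t}\psi(t_{0},P_{0})+o(\delta)$ after pulling out the constant and using the space moments to replace $P_{\delta}$ by $P_{0}$ in $\partial_{t}\psi$. For the space slot I split $\psi(t_{0},P_{\delta})-\psi(t_{0},P_{0})$ as the sum of $[\psi(t_{0},x_{0},y_{0},z_{0}+\tilde{\zeta}_{\delta})-\psi(t_{0},P_{0})]$, which is \emph{exactly} $\varphi(\tilde{\zeta}_{\delta})$ for $\varphi(w):=\psi(t_{0},x_{0},y_{0},z_{0}+w)-\psi(t_{0},P_{0})\in\mathfrak{F}_{0}$ with $D\varphi(0)=D_{z}\psi(t_{0},P_{0})$, and $[\psi(t_{0},P_{\delta})-\psi(t_{0},x_{0},y_{0},z_{0}+\tilde{\zeta}_{\delta})]$, which expands in $(\tilde{\xi}_{\delta},\tilde{\eta}_{\delta})$ to $\langle D_{y}\psi(t_{0},P_{0}),\tilde{\eta}_{\delta}\rangle+\frac{1}{2}\langle D_{x}^{2}\psi(t_{0},P_{0})\tilde{\xi}_{\delta},\tilde{\xi}_{\delta}\rangle$ plus the linear term $\langle D_{x}\psi(t_{0},P_{0}),\tilde{\xi}_{\delta}\rangle$. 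The linear $\tilde{\xi}$-term drops: since $\tilde{\xi}_{\delta}$ is mean-zero symmetric $G$-normal, $\mathbb{\tilde{E}}[\pm\langle v,\tilde{\xi}_{\delta}\rangle]=0$, so by sub-additivity adding it leaves the sublinear expectation unchanged. Replacing the frozen derivatives at $z_{0}+\tilde{\zeta}_{\delta}$ by those at $z_{0}$ costs only $\mathbb{\tilde{E}}[|\tilde{\zeta}_{\delta}|\,|\tilde{\xi}_{\delta}|]=O(\delta^{1/2+1/\alpha})=o(\delta)$ and $\mathbb{\tilde{E}}[|\tilde{\zeta}_{\delta}|\,|\tilde{\eta}_{\delta}|]=o(\delta)$, controlled via Lemma \ref{Lipschitz} and the moment bounds. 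Dividing by $\delta$ and letting $\delta\to0$, Lemma \ref{exist theorem} ensures the limit exists and equals $\mathbb{F}[(\varphi,D_{y}\psi(t_{0},P_{0}),D_{x}^{2}\psi(t_{0},P_{0}))]$, which by Theorem \ref{represent theorem} is exactly $\sup_{(F_{\mu},q,Q)\in\Theta}\{\int_{\mathbb{R}^{d}}\delta_{\lambda}\psi\,F_{\mu}(d\lambda)+\langle D_{y}\psi,q\rangle+\frac{1}{2}tr[D_{x}^{2}\psi\,Q]\}$ at $(t_{0},P_{0})$, since $\varphi(z)-\langle D\varphi(0),z\rangle=\delta_{z}\psi$. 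Combining the two slots yields $0\leq\delta\big(G(\psi)-\partial_{t}\psi\big)(t_{0},P_{0})+o(\delta)$, i.e. the required subsolution inequality.

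Finally, uniqueness follows from a comparison principle for (\ref{1.0}): any bounded upper semicontinuous subsolution dominated by a bounded lower semicontinuous supersolution at $t=0$ stays dominated for all $t$. I would prove this by the doubling-of-variables technique adapted to the nonlocal setting, splitting the $\alpha$-stable kernel at a radius $\varepsilon$ and using (\ref{F_mu condition 1})--(\ref{F_mu condition 2}) (uniform $\alpha<2$ integrability) to dominate the nonlocal contribution, together with the compactness of $\Theta$. I expect the main obstacle to be twofold: making the $o(\delta)$ estimates in the viscosity computation uniform while simultaneously juggling the three distinct scalings $\delta^{1/2},\delta,\delta^{1/\alpha}$ and the coupling among the diffusion, drift and jump components; and the comparison principle itself, where the degeneracy of the operator and the nonlocal term demand careful treatment of the nonlocal second-order jet at the doubling maximum. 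The representation in Theorem \ref{represent theorem} is the linchpin: it converts the probabilistic difference quotient into the analytic PIDE operator, which is what makes the viscosity step tractable.
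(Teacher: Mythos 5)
Your proposal follows essentially the same route as the paper for the existence half: the paper also establishes joint continuity of $u$ from the moment bounds, uses the dynamic programming principle $u(t+s,x,y,z)=\mathbb{\tilde{E}}[u(t,x+\tilde{\xi}_{s},y+\tilde{\eta}_{s},z+\tilde{\zeta}_{s})]$, Taylor-expands the test function with the jump increment isolated exactly as you do (its claim (\ref{1.1})), kills the linear $\tilde{\xi}$-term via $\mathbb{\tilde{E}}[\tilde{\xi}_{s}]=\mathbb{\tilde{E}}[-\tilde{\xi}_{s}]=0$, and then identifies the limit of the difference quotient through Lemma \ref{exist theorem} and Theorem \ref{represent theorem}, with the same observation that $\varphi(z)-\langle D\varphi(0),z\rangle=\delta_{z}\psi$. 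The one place where you genuinely diverge is uniqueness: the paper does \emph{not} prove a comparison principle; it simply invokes Corollary 55 of \cite{HP2021}. Your doubling-of-variables program for the degenerate nonlocal operator is the standard way to obtain that result, but as written it is only a sketch of what is a substantial argument in its own right, whereas the paper disposes of it by citation. If you intend your proof to be self-contained, this is where most of the remaining work lies; otherwise you should cite the known comparison result as the paper does.

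One technical caveat in your subsolution computation: the bound $\mathbb{\tilde{E}}[|\tilde{\zeta}_{\delta}|\,|\tilde{\xi}_{\delta}|]=O(\delta^{1/2+1/\alpha})$ is not justified as stated. The components $\tilde{\xi}_{\delta}$ and $\tilde{\zeta}_{\delta}$ belong to the same increment and are not mutually independent, so the product expectation does not factor; and a direct H\"older estimate on $|\tilde{\zeta}_{\delta}|$ can fail because $\mathbb{\tilde{E}}[|\tilde{\zeta}_{\delta}|^{q}]$ may be infinite for $q\geq\alpha$ (only the first moment is controlled, by (\ref{Z moment})). The paper's fix exploits that the $\tilde{\zeta}$-dependent factor in the actual error term is a \emph{bounded} difference of gradients: applying H\"older (Proposition \ref{Prop^E Plimi}(iii)) with $p=\frac{2+\alpha}{2-\alpha}$, $q=\frac{2+\alpha}{2\alpha}$ and using $|\delta_{x}\psi|^{q}\leq C|\delta_{x}\psi|$ reduces everything to the first moment of $\tilde{\zeta}_{\delta}$, giving the rate $O(\delta^{1/2+2/(2+\alpha)})$, which is weaker than your claim but still $o(\delta)$ precisely because $\alpha<2$. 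So your conclusion survives, but the intermediate estimate needs this repair.
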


\begin{proof}
We first show that $u$ is continuous. It is clear that $u(t,\cdot,\cdot
,\cdot)$ is uniformly Lipschitz continuous with the same Lipschitz constant as
for $\phi$. For each $t,s\in \lbrack0,1]$ such that $t+s\leq1$, we obtain
\begin{equation}
u(t+s,x,y,z)=\mathbb{\tilde{E}}[u(t,x+\tilde{\xi}_{s},y+\tilde{\eta}%
_{s},z+\tilde{\zeta}_{s})],\text{ \ }(x,y,z)\in \mathbb{R}^{3d}, \label{DPP}%
\end{equation}
which implies the continuity of $u(\cdot,x,y,z)$:
\[
|u(t+s,x,y,z)-u(t,x,y,z)|\leq C\mathbb{\tilde{E}}[|\tilde{\xi}_{s}%
|+|\tilde{\eta}_{s}|+|\tilde{\zeta}_{s}|]\leq C\left(  \sqrt{s}+s+M_{z}%
\sqrt[\alpha]{s}\right)  .
\]

Next, we will prove that $u$ is the unique viscosity solution of (\ref{1.0}).
The uniqueness of viscosity solution can be found in Corollary 55 in
\cite{HP2021}. It suffices to prove that $u$ is a viscosity subsolution, and
the other case can be proved in a similar way. Assume that $\psi$ is a smooth
test function on $(0,1]\times \mathbb{R}^{3d}$ satisfying $\psi \geq u$ and
$\psi(\bar{t},\bar{x},\bar{y},\bar{z})=u(\bar{t},\bar{x},\bar{y},\bar{z})$ for
some point $(\bar{t},\bar{x},\bar{y},\bar{z})\in(0,1]\times \mathbb{R}^{3d}$.
For each $s\in(0,\bar{t})$, the dynamic programming principle (\ref{DPP})
shows that
\begin{align}
0  &  =\mathbb{\tilde{E}}[u(\bar{t}-s,\bar{x}+\tilde{\xi}_{s},\bar{y}%
+\tilde{\eta}_{s},\bar{z}+\tilde{\zeta}_{s})-u(\bar{t},\bar{x},\bar{y},\bar
{z})]\nonumber \\
&  \leq \mathbb{\tilde{E}}[\psi(\bar{t}-s,\bar{x}+\tilde{\xi}_{s},\bar
{y}+\tilde{\eta}_{s},\bar{z}+\tilde{\zeta}_{s})-\psi(\bar{t},\bar{x},\bar
{y},\bar{z})].\nonumber
\end{align}
We claim that
\begin{align}
\mathbb{\tilde{E}}  &  [\psi(\bar{t}-s,\bar{x}+\tilde{\xi}_{s},\bar{y}%
+\tilde{\eta}_{s},\bar{z}+\tilde{\zeta}_{s})-\psi(\bar{t},\bar{x},\bar{y}%
,\bar{z})]\label{1.1}\\
&  =-\partial_{t}\psi(\bar{t},\bar{x},\bar{y},\bar{z})s+\mathbb{\tilde{E}%
}[\psi(\bar{t},\bar{x},\bar{y},\bar{z}+\tilde{\zeta}_{s})-\psi(\bar{t},\bar
{x},\bar{y},\bar{z})\nonumber \\
&  \text{ \  \ }+\langle D_{y}\psi(\bar{t},\bar{x},\bar{y},\bar{z}),\tilde
{\eta}_{s}\rangle+\frac{1}{2}\langle D_{x}^{2}\psi(\bar{t},\bar{x},\bar
{y},\bar{z})\tilde{\xi}_{s},\tilde{\xi}_{s}\rangle]+o(s),\nonumber
\end{align}
whose proof will be given at the end of the proof. This implies that
\begin{align}
0  &  \leq-\partial_{t}\psi(\bar{t},\bar{x},\bar{y},\bar{z})+\lim
_{s\rightarrow0}\mathbb{\tilde{E}}[\psi(\bar{t},\bar{x},\bar{y},\bar{z}%
+\tilde{\zeta}_{s})-\psi(\bar{t},\bar{x},\bar{y},\bar{z})\label{1.6}\\
&  \text{ \  \ }+\langle D_{y}\psi(\bar{t},\bar{x},\bar{y},\bar{z}),\tilde
{\eta}_{s}\rangle+\frac{1}{2}\langle D_{x}^{2}\psi(\bar{t},\bar{x},\bar
{y},\bar{z})\tilde{\xi}_{s},\tilde{\xi}_{s}\rangle]s^{-1}.\nonumber
\end{align}

%\textcolor[rgb]{1.00,0.00,0.00}{Seeing that there exist a sequence $\{s_{k}\}_{k=1}\subset \mathcal{D}_{\infty }[0,1]$
%satisfying $s_{k}\downarrow s$ as $k\rightarrow \infty$ and a convergent sequence
%$\big \{(s_{k}/n_{i}^{\ast})^{\frac{1}{\alpha}}S_{n_{i}^{\ast}}^{3}\big \}_{i=1}^{\infty}$
%for each $s_{k}\in \mathcal{D}_{\infty }[0,1]$, such that
%\begin{equation*}
%\mathbb{\tilde{E}}[\psi(\bar{t},\bar{x},\bar{y},\bar{z}+\tilde{\zeta}
%_{s})]=\lim_{k\rightarrow \infty}\lim_{i\rightarrow \infty}\mathbb{\hat{E}}
%\big[\psi \big(\bar{t},\bar{x},\bar{y},\bar{z}+(s_{k}/n_{i}^{\ast})^{\frac{1}{\alpha}}S_{n_{i}^{\ast}}^{3}\big)\big].
%\end{equation*}
%In view of Theorem \ref{recursive}, we get
%\begin{align*}
%& \bigg \vert \mathbb{\tilde{E}}[\psi(\bar{t},\bar{x},\bar{y},\bar{z}+\tilde{
%\zeta}_{s})]-\psi(\bar{t},\bar{x},\bar{y},\bar{z})-s\sup \limits_{F_{\mu}\in
%\mathcal{L}}\int_{\mathbb{R}^{d}}\delta_{\lambda}\psi (\bar{t},\bar{x},\bar{
%y },\bar{z})F_{\mu}(d\lambda)\bigg \vert \\
%& \leq \lim_{k\rightarrow \infty}\bigg \vert \lim_{i\rightarrow \infty }
%\mathbb{\hat{E}}\big[\psi \big(\bar{t},\bar{x},\bar{y},\bar{z}+(s_{k}/n_{i}^{\ast})^{\frac{1}{\alpha}}
%S_{n_{i}^{\ast}}^{3}\big)\big]-\psi(\bar{t},\bar {x},\bar{y}, \bar{z}) \\
%&\text{\  \  \  \ } -s_{k}\sup \limits_{F_{\mu}\in \mathcal{L}}\int _{\mathbb{R}
%^{d}}\delta_{\lambda}\psi(\bar{t},\bar{x},\bar{y},\bar{z})F_{\mu }(d\lambda)
%\bigg \vert =o(s).
%\end{align*}}
In turn, Theorem \ref{represent theorem} yields that there exists an
uncertainty set $\Theta \subset \mathcal{L}\times \mathbb{R}^{d}\times
\mathbb{S}_{+}(d)$ satisfying
\[
\sup_{(F_{\mu},q,Q)\in \Theta}\left \{  \int_{\mathbb{R}^{d}}|z|\wedge
|z|^{2}F_{\mu}(dz)+|q|+|Q|\right \}  <\infty
\]
such that
\begin{align}
&  \lim_{s\rightarrow0}\mathbb{\tilde{E}}\big[\psi(\bar{t},\bar{x},\bar
{y},\bar{z} +\tilde{\zeta}_{s})-\psi(\bar{t},\bar{x},\bar{y},\bar{z})+\langle
D_{y}\psi( \bar{t},\bar{x},\bar{y},\bar{z}),\tilde{\eta}_{s}\rangle+\frac{1}{
2}\langle D_{x}^{2}\psi(\bar{t},\bar{x},\bar{y},\bar{z})\tilde{\xi}_{s},
\tilde{\xi} _{s}\rangle \big]s^{-1}\label{1.5}\\
&  =\sup_{(F_{\mu},q,Q)\in \Theta}\left \{  \int_{\mathbb{R}^{d}}\delta
_{\lambda}\psi(\bar{t},\bar{x},\bar{y},\bar{z})F_{\mu}(d\lambda)+\langle
D_{y}\psi(\bar{t},\bar{x},\bar{y},\bar{z}),q\rangle+\frac{1}{2} tr[D_{x}%
^{2}\psi(\bar{t},\bar{x},\bar{y},\bar{z})Q]\right \}  .\nonumber
\end{align}
Combining (\ref{1.6}) with (\ref{1.5}), it follows that
\begin{align*}
\displaystyle \partial_{t}\psi(\bar{t},\bar{x},\bar{y},\bar{z} )  &
-\sup_{(F_{\mu},q,Q)\in \Theta}\left \{  \int_{\mathbb{R} ^{d}}\delta_{\lambda
}\psi(\bar{t},\bar{x},\bar{y},\bar{z})F_{\mu}(d\lambda) \right. \\
\displaystyle  &  \left.  +\langle D_{y}\psi(\bar{t},\bar{x},\bar{y},\bar{z}
),q\rangle+\frac{1}{2}tr[D_{x}^{2}\psi(\bar{t},\bar{x},\bar{y},\bar{z}
)Q]\right \}  \leq0,
\end{align*}
which is the desired result.

We conclude the proof by showing (\ref{1.1}). Note that
\begin{align}
&  \psi(\bar{t}-s,\bar{x}+\tilde{\xi}_{s},\bar{y}+\tilde{\eta}_{s},\bar{z}+
\tilde{\zeta}_{s})-\psi(\bar{t},\bar{x},\bar{y},\bar{z})\nonumber \\
&  =\psi(\bar{t}-s,\bar{x}+\tilde{\xi}_{s},\bar{y}+\tilde{\eta}_{s},\bar{z}+
\tilde{\zeta}_{s})-\psi(\bar{t},\bar{x}+\tilde{\xi}_{s},\bar{y}+\tilde{\eta}
_{s},\bar{z}+\tilde{\zeta}_{s})\label{1.2}\\
&  \text{ \  \ }+\psi(\bar{t},\bar{x}+\tilde{\xi}_{s},\bar{y}+\tilde{\eta}
_{s}, \bar{z}+\tilde{\zeta}_{s})-\psi(\bar{t},\bar{x},\bar{y},\bar
{z}).\nonumber
\end{align}
Taylor's expansion yields that
\begin{equation}
\psi(\bar{t}-s,\bar{x}+\tilde{\xi}_{s},\bar{y}+\tilde{\eta}_{s},\bar{z}+
\tilde{\zeta}_{s})-\psi(\bar{t},\bar{x}+\tilde{\xi}_{s},\bar{y}+\tilde{\eta
}_{s},\bar{z}+ \tilde{\zeta}_{s})=-\partial_{t}\psi(\bar{t},\bar{x},\bar
{y},\bar{z})s+\epsilon_{1}, \label{1.3}%
\end{equation}
where%
\[
\epsilon_{1}=s\int_{0}^{1}\big[-\partial_{t}\psi(\bar{t}-\theta s,\bar
{x}+\tilde{ \xi}_{s},\bar{y}+\tilde{\eta}_{s},\bar{z}+\tilde{\zeta}%
_{s})+\partial_{t}\psi(\bar{t},\bar{x},\bar{y},\bar{z})\big]d\theta.
\]
Since $\psi$ is the smooth function and $(\tilde{\xi}_{s},\tilde{\eta}_{s})$
$\overset{d}{=}(\sqrt{s}\tilde{\xi}_{1},s\tilde{\eta}_{1})$, we obtain that
\[
\mathbb{\tilde{E}}[|\epsilon_{1}|]\leq C_{\psi}(s^{2}+\mathbb{\tilde{E}}[|
\tilde{\xi}_{s}+\tilde{\eta}_{s}|]s+\mathbb{\tilde{E}}[|\tilde{\zeta}
_{s}|]s)\leq Cs^{\frac{3}{2}}.
\]
On the other hand, using Taylor's expansion, we derive that
\begin{align}
&  \psi(\bar{t},\bar{x}+\tilde{\xi}_{s},\bar{y}+\tilde{\eta}_{s},\bar{z}+
\tilde{\zeta}_{s})-\psi(\bar{t},\bar{x},\bar{y},\bar{z})\nonumber \\
&  =\psi(\bar{t},\bar{x},\bar{y},\bar{z}+\tilde{\zeta}_{s})-\psi(\bar{t},\bar{
x},\bar{y},\bar{z})+\langle D_{x}\psi(\bar{t},\bar{x},\bar{y},\bar{z}),
\tilde{\xi}_{s}\rangle \label{1.4}\\
&  \text{ \  \ }+\langle D_{y}\psi(\bar{t},\bar{x},\bar{y},\bar{z}),\tilde{
\eta}_{s}\rangle+\frac{1}{2}\langle D_{x}^{2}\psi(\bar{t},\bar{x},\bar{y},
\bar{z})\tilde{\xi}_{s},\tilde{\xi}_{s}\rangle+\epsilon_{2},\nonumber
\end{align}
where $\epsilon_{2}:=\epsilon_{2,1}+\epsilon_{2,2}$,
\begin{align*}
\epsilon_{2,1}  &  =\psi(\bar{t},\bar{x}+\tilde{\xi}_{s},\bar{y}+\tilde{\eta}
_{s},\bar{z}+\tilde{\zeta}_{s})-\psi(\bar{t},\bar{x},\bar{y},\bar{z}+\tilde{
\zeta}_{s})\\
&  \text{ \  \ }-(\psi(\bar{t},\bar{x}+\tilde{\xi}_{s},\bar{y}+\tilde{\eta}
_{s},\bar{z})-\psi(\bar{t},\bar{x},\bar{y},\bar{z}))\\
&  =\int_{0}^{1}\big \langle D_{x}\psi(\bar{t},\bar{x}+\theta \tilde{\xi}%
_{s},\bar{ y}+\theta \tilde{\eta}_{s},\bar{z}+\tilde{\zeta}_{s})-D_{x}\psi
(\bar{t}, \bar{x}+\theta \tilde{\xi}_{s},\bar{y}+\theta \tilde{\eta}_{s},\bar
{z}), \tilde{\xi}_{s}\big \rangle d\theta \\
&  \text{\  \  \ }+\int_{0}^{1}\big \langle D_{y}\psi(\bar{t},\bar{x}%
+\theta \tilde{ \xi}_{s},\bar{y}+\theta \tilde{\eta}_{s},\bar{z}+\tilde{\zeta}
_{s})-D_{y}\psi(\bar{t},\bar{x}+\theta \tilde{\xi}_{s},\bar{y}+\theta \tilde{
\eta}_{s},\bar{z}),\tilde{\eta}_{s}\big \rangle d\theta \\
&  :=\int_{0}^{1}\big \langle \delta_{x}\psi(\tilde{\zeta}_{s};\bar{t},\bar
{x}+\theta \tilde{ \xi}_{s},\bar{y}+\theta \tilde{\eta}_{s}), \tilde{\xi}%
_{s}\big \rangle d\theta+ \int_{0}^{1}\big \langle \delta_{y}\psi(\tilde
{\zeta}_{s};\bar{t},\bar{x}+\theta \tilde{ \xi}_{s},\bar{y}+\theta \tilde{\eta
}_{s}), \tilde{\eta}_{s}\big \rangle d\theta
\end{align*}
and
\begin{align*}
\epsilon_{2,2}  &  =\psi(\bar{t},\bar{x}+\tilde{\xi}_{s},\bar{y}+\tilde{\eta}
_{s},\bar{z})-\psi(\bar{t},\bar{x},\bar{y},\bar{z})-\langle D_{x}\psi(\bar{ t
},\bar{x},\bar{y},\bar{z}),\tilde{\xi}_{s}\rangle \\
&  \text{\  \  \ }-\langle D_{y}\psi(\bar{t},\bar{x},\bar{y},\bar{z}),\tilde{
\eta}_{s}\rangle-\frac{1}{2}\langle D_{x}^{2}\psi(\bar{t},\bar{x},\bar{y},
\bar{z})\tilde{\xi}_{s},\tilde{\xi}_{s}\rangle \\
&  =\int_{0}^{1}\int_{0}^{1}\big \langle(D_{x}^{2}\psi(\bar{t},\bar{x}%
+\tau \theta \tilde{\xi}_{s},\bar{y}+\tau \theta \tilde{\eta}_{s},\bar{z}%
)-D_{x}^{2}\psi( \bar{t},\bar{x},\bar{y},\bar{z}))\tilde{\xi}_{s},\tilde{\xi
}_{s}\big \rangle \theta d\theta d\tau \\
&  \text{\  \  \ }+\int_{0}^{1}\int_{0}^{1}\big \langle D_{y}^{2}\psi(\bar
{t},\bar{ x }+\tau \theta \tilde{\xi}_{s},\bar{y}+\tau \theta \tilde{\eta}%
_{s},\bar{z} ) \tilde{\eta}_{s},\tilde{\eta}_{s}\big \rangle \theta d\theta
d\tau \\
&  \text{\  \  \ }+2\int_{0}^{1}\int_{0}^{1}\big \langle D_{xy}^{2}\psi(\bar
{t},\bar{ x}+\tau \theta \tilde{\xi}_{s},\bar{y}+\tau \theta \tilde{\eta}_{s}%
,\bar{z} ) \tilde{\eta}_{s},\tilde{\xi}_{s}\big \rangle \theta d\theta d\tau.
\end{align*}
Noting that $D\psi$ is bounded and choosing $p=\frac{2+\alpha}{2-\alpha}$ and
$q=\frac{2+\alpha}{2\alpha}$, it follows that
%\begin{align*}
%\mathbb{\tilde{E}}[|\epsilon_{2,1}|]  &  \leq \big(\mathbb{\tilde{E}}%
%[|\tilde{\xi}_{s}+\tilde{\eta}_{s}|^{p}]\big)^{\frac{1}{p}}%
%\bigg(\mathbb{\tilde{E}}\bigg[\int_{0}^{1}|\delta \psi(\tilde{\zeta}_{s}%
%;\bar{t},\bar{x}+\theta \tilde{\xi}_{s},\bar{y}+\theta \tilde{\eta}_{s}%
%)|^{q}d\theta \bigg]\bigg)^{\frac{1}{q}}\\
%&  \leq C\big(\mathbb{\tilde{E}}[|\tilde{\xi}_{s}+\tilde{\eta}_{s}%
%|^{p}]\big)^{\frac{1}{p}}\bigg(\mathbb{\tilde{E}}\bigg[\int_{0}^{1}|\delta
%\psi(\tilde{\zeta}_{s};\bar{t},\bar{x}+\theta \tilde{\xi}_{s},\bar{y}%
%+\theta \tilde{\eta}_{s})|d\theta \bigg]\bigg)^{\frac{1}{q}}\\
%&  \leq C_{1}\big(\mathbb{\tilde{E}}[|\tilde{\xi}_{s}+\tilde{\eta}_{s}%
%|^{p}]\big)^{\frac{1}{p}}\big(\mathbb{\tilde{E}}[|\tilde{\zeta}_{s}%
%|]\big)^{\frac{1}{q}}\\
%&  \leq C_{2}s^{1+\frac{2-\alpha}{4+2\alpha}}=o(s),
%\end{align*}
%where $$\delta
%\psi(z-z^{\prime};t,x,y):=D_{x}\psi(t,x,y,z)-D_{x}\psi
%(t,x,y,z^{\prime})+D_{y}\psi(t,x,y,z)-D_{y}\psi(t,x,y,z^{\prime}).$$%
\begin{align*}
\mathbb{\tilde{E}}[|\epsilon_{2,1}|]  &  \leq \big(\mathbb{\tilde{E}}[|\tilde{
\xi}_{s}|^{p}]\big)^{\frac{1}{p}}\bigg(\mathbb{\tilde{E}}\bigg[ \int_{0}%
^{1}|\delta_{x}\psi(\tilde{\zeta}_{s};\bar{t},\bar{x}+\theta \tilde{ \xi}%
_{s},\bar{y}+\theta \tilde{\eta}_{s})|^{q}d\theta \bigg]\bigg)^{\frac{1}{q}}\\
&  \text{ \  \ }+\big(\mathbb{\tilde{E}}[|\tilde{\eta}_{s}|^{p}]\big)^{\frac
{1}{p}}\bigg(\mathbb{\tilde{E}}\bigg[\int_{0}^{1}|\delta_{y}\psi(\tilde{\zeta}
_{s};\bar{t},\bar{x}+\theta \tilde{\xi}_{s},\bar{y}+\theta \tilde{\eta}
_{s})|^{q}d\theta \bigg]\bigg)^{\frac{1}{q}}\\
&  \leq C\big(\mathbb{\tilde{E}}[|\tilde{\xi}_{s}|^{p}]\big)^{\frac{1}{p}}
\bigg(\mathbb{\tilde{E}}\bigg[\int_{0}^{1}|\delta_{x}\psi(\tilde{\zeta}_{s};
\bar{t},\bar{x}+\theta \tilde{\xi}_{s},\bar{y}+\theta \tilde{\eta} _{s}%
)|d\theta \bigg]\bigg)^{\frac{1}{q}}\\
&  \text{ \  \ }+C\big(\mathbb{\tilde{E}}[|\tilde{\eta}_{s}|^{p}]\big)^{\frac{
1}{p}}\bigg(\mathbb{\tilde{E}}\bigg[\int_{0}^{1}|\delta_{y}\psi(\tilde{\zeta
}_{s};\bar{t},\bar{x}+\theta \tilde{\xi}_{s},\bar{y}+\theta \tilde{\eta}
_{s})|d\theta \bigg]\bigg)^{\frac{1}{q}}\\
&  \leq C_{1}\Big[ \big(\mathbb{\tilde{E}}[|\tilde{\xi}_{s}|^{p} ]\big)^{
\frac{1}{p}}+\big(\mathbb{\tilde{E}}[|\tilde{\eta}_{s}|^{p} ]\big)^{\frac{1}{
p}}\Big] \big(\mathbb{\tilde{E}}[|\tilde{\zeta}_{s}|]\big)^{\frac{1}{q}}
=o(s).
\end{align*}
Likewise, we have
\[
\mathbb{\tilde{E}}[|\epsilon_{2,2}|]\leq C\mathbb{\tilde{E}}\big[|\tilde{\xi}
_{s}|^{3}+|\tilde{\eta}_{s}||\tilde{\xi}_{s}|^{2}+|\tilde{\eta}_{s}|^{2}+2|
\tilde{\eta}_{s}||\tilde{\xi}_{s}|\big]=o(s).
\]
Consequently, together with (\ref{1.2})-(\ref{1.4}) and $\mathbb{\tilde{E}%
}[\tilde{\xi}_{s}]=\mathbb{\tilde{E}}[-\tilde{\xi}_{s}]=0$, we prove
(\ref{1.1}). The proof is completed.
\end{proof}

\section{An example}

\label{Section_an example}

\begin{example}
This example illustrates the rationality of the conditions (A2)-(A3). For
simplicity, we consider the case $d=2$. Given $\underline{\Lambda}%
,\overline{\Lambda}>0$. Let $F_{\mu}$ be the L\'{e}vy measure given in Remark
\ref{Remark} with $\mu$ concentrated on the points $S_{0}%
=\{(1,0),(-1,0),(0,1),(0,-1)\}$,
\[
\mathcal{L}_{0}=\left \{  F_{\mu}\  \text{measure on }\mathbb{R}^{2}:\mu
(\beta)\in(\underline{\Lambda},\overline{\Lambda}),\  \text{for }\beta \in
S_{0}\right \}  ,
\]
and $\mathcal{L\subset L}_{0}$ be a nonempty compact convex set. Denote
$K=(\underline{\Lambda},\overline{\Lambda})^{4}$ and
\[%
\begin{array}
[c]{llll}%
k_{1}^{1}=\mu((-1,0)), & k_{2}^{1}=\mu((1,0)), & k_{1}^{2}=\mu((0,-1)), &
k_{2}^{2}=\mu((0,1)).
\end{array}
\]
%Denote
%\begin{align*}
%K_{\pm}=&\left \{ (k_{+}^{1},k_{-}^{1},k_{+}^{2},k_{-}^{2}):F_{\mu}\in
%\mathcal{L}\text{, s.t. }k_{+}^{1}=\mu((1,0)),k_{-}^{1}=\mu((-1,0)),\right.
%\\
%&\left.k_{+} ^{2}=\mu((0,1)),k_{-}^{2}=\mu((0,-1))\right \}.
%\end{align*}

For each $(k_{1}^{1},k_{2}^{1},k_{1}^{2},k_{2}^{2})\in K$, let $W_{k^{i}}$,
$i=1,2$, be two classical random variables such that for $i=1,2$,

\begin{description}
\item[(i)] $W_{k^{i}}$ has mean zero;

\item[(ii)] $W_{k^{i}}$ has a cumulative distribution function
\[
F_{W_{k^{i}}}(x)=\left \{
\begin{array}
[c]{ll}%
\displaystyle \left[  k_{1}^{i}/\alpha+\beta_{1}^{i}(x)\right]  \frac
{1}{|x|^{\alpha}}, & x<0,\\
\displaystyle1-\left[  k_{2}^{i}/\alpha+\beta_{2}^{i}(x)\right]  \frac
{1}{x^{\alpha}}, & x>0,
\end{array}
\right.
\]
with some continuously differentiable functions $\beta_{1}^{i}:$ $(-\infty,0]$
$\rightarrow \mathbb{R}$ and $\beta_{2}^{i}:[0,\infty)\rightarrow \mathbb{R}$
satisfying
\[
\lim_{x\rightarrow-\infty}\beta_{1}^{i}(x)=\lim_{x\rightarrow \infty}\beta
_{2}^{i}(x)=0.
\]

\item[(iii)] There exists a non-negative function $f$ on $\mathbb{N}$ tending
to 0 as $n\rightarrow \infty$, such that the following quantities are less than
$f(n)$\ for all $n$:
\[%
\begin{array}
[c]{lll}%
\displaystyle|\beta_{1}^{i}(-n^{1/\alpha})|,\text{ \  \ } & \displaystyle \int
_{-\infty}^{-1}\frac{|\beta_{1}^{i}(n^{1/\alpha}x)|}{|x|^{\alpha}}dx,\text{
\ } & \displaystyle \int_{-1}^{0}\frac{|\beta_{1}^{i}(n^{1/\alpha}%
x)|}{|x|^{\alpha-1}}dx,\\
&  & \\
\displaystyle|\beta_{2}^{i}(n^{1/\alpha})|,\text{ \ } & \displaystyle \int
_{1}^{\infty}\frac{|\beta_{2}^{i}(n^{1/\alpha}x)|}{x^{\alpha}}dx,\text{ \ } &
\displaystyle \int_{0}^{1}\frac{|\beta_{2}^{i}(n^{1/\alpha}x)|}{x^{\alpha-1}%
}dx.
\end{array}
\]

\end{description}

Denote $\tilde{\Omega}=\mathbb{R}^{2}$, $\mathcal{\tilde{H}}=C_{Lip}%
(\mathbb{R}^{2})$. For each $\chi=\varphi(x_{1},x_{2})\in \mathcal{\tilde{H}}$,
define the sublinear expectation
\[
\mathbb{\hat{E}}_{1}[\chi]=\sup_{(k_{1}^{1},k_{2}^{1},k_{1}^{2},k_{2}^{2})\in
K}\int_{\mathbb{R}^{2}}\varphi(x_{1},x_{2})dF_{W_{k^{1}}}(x_{1})dF_{W_{k^{2}}%
}(x_{2}).
\]
We consider an $\mathbb{R}^{2}$-valued random variable%
\[
Z(\tilde{\omega})=(Z^{1},Z^{2})(\tilde{\omega})=\tilde{\omega},\text{\  \ }%
\tilde{\omega}=(x_{1},x_{2})\in \tilde{\Omega}.
\]
Clearly, $\mathbb{\hat{E}}_{1}[Z^{i}]=\mathbb{\hat{E}}_{1}[-Z^{i}]=0$ and
$\mathbb{\hat{E}}_{1}[|Z^{i}|]<\infty$, for $i=1,2$. Construct a product space
(cf. \cite{P2010})
\[
\big(\Omega,\mathcal{H},\mathbb{\hat{E}}\big):=\big(\tilde{\Omega}%
^{\mathbb{N}},\tilde{\mathcal{H}}^{\otimes^{\mathbb{N}}},\mathbb{\hat{E}}%
_{1}^{\otimes^{\mathbb{N}}}\big)
\]
and introduce $Z_{i}(\omega):=Z(\tilde{\omega}_{i})$, for $\omega
=(\tilde{\omega}_{1},\tilde{\omega}_{2,}\cdots)\in \Omega$, $i=1,2,\cdots$.
Then $\{Z_{i}\}_{i=1}^{\infty}$ be a sequence of i.i.d. $\mathbb{R}^{2}%
$-valued random variables on $\big(\Omega,\mathcal{H},\mathbb{\hat{E}}\big)$
in the sense that $Z_{i+1}\overset{d}{=}Z_{i}$, and $Z_{i+1}\perp \! \! \!
\perp(Z_{1},Z_{2},\ldots,Z_{i})$ for each $i\in \mathbb{N}$.

Next, we shall adapt the method developed in \cite{HJL2021} to prove
$M_{z}<\infty$. For given $n$, define an approximation scheme $u_{n}%
:[0,1]\times \mathbb{R}^{2}\rightarrow \mathbb{R}$ recursively by
\[%
\begin{array}
[c]{l}%
u_{n}(t,z)=|z|,\text{ \ if }t\in \lbrack0,1/n),\\
u_{n}(t,z)=\mathbb{\hat{E}}[u_{n}(t-1/n,z+n^{-1/\alpha}Z)]\text{, \ if }%
t\in \lbrack1/n,1].
\end{array}
\]
Then, $u_{n}(1,0)=n^{-\frac{1}{\alpha}}\mathbb{\hat{E}}[|S_{n}^{3}|]$. By
means of Theorem 4.1 in \cite{HJL2021}, we get
\[
u_{n}(1,0)=u_{n}(1,0)-u_{n}(0,0)\leq C(1+\sqrt{f(n)})
\]
approaches $C<\infty$ as $n\rightarrow \infty$, which implies (A2) holds.

To verify (A3), we further impose the condition

\begin{description}
\item[(iv)] There exists a constant $M>0$ such that for any $(k_{1}^{1}%
,k_{2}^{1},k_{1}^{2},k_{2}^{2})\in K$, the following quantities are less than
$M$:
\[%
\begin{array}
[c]{lll}%
\displaystyle \left \vert \int_{-\infty}^{-1}\frac{\beta_{1}^{i}(x)}%
{|x|^{\alpha}}dx\right \vert ,\text{\ } &  & \displaystyle \left \vert \int
_{1}^{\infty}\frac{\beta_{2}^{i}(x)}{x^{\alpha}}dx\right \vert .
\end{array}
\]

\end{description}

Following along similar arguments as in Remark \ref{Remark}, we derive that
for $\varphi \in C_{b}^{3}(\mathbb{R}^{2})$
\begin{align*}
&  \frac{1}{s}\bigg \vert \mathbb{\hat{E}}\big[\varphi(z+s^{\frac{1}{\alpha}%
}Z_{1})-\varphi(z)\big]-s\sup_{F_{\mu}\in \mathcal{L}}\int_{\mathbb{R}^{2}%
}\delta_{\lambda}\varphi(z)F_{\mu}(d\lambda)\bigg \vert \\
&  \leq \frac{1}{s}\mathbb{\hat{E}}\bigg[\bigg \vert \mathbb{\hat{E}%
}\big[\delta_{s^{1/\alpha}Z_{1}^{1}}^{1}\varphi(z_{1},z_{2}+x_{2}%
)\big]-s\sup_{F_{\mu}\in \mathcal{L}}\int_{\mathbb{R}}\delta_{\lambda_{1}}%
^{1}\varphi(z_{1},z_{2}+x_{2})F_{\mu}^{1}(d\lambda_{1})\bigg \vert_{x_{2}%
=s^{1/\alpha}Z_{1}^{2}}\bigg]\\
&  \text{ \  \ }+Cs^{\frac{1}{\alpha}}\mathbb{\hat{E}}[|Z_{1}^{2}|]+\frac{1}%
{s}\bigg \vert \mathbb{\hat{E}}\big[\delta_{s^{1/\alpha}Z_{1}^{2}}^{2}%
\varphi(z_{1},z_{2})\big]-s\sup_{F_{\mu}\in \mathcal{L}}\int_{\mathbb{R}}%
\delta_{\lambda_{2}}^{2}\varphi(z_{1},z_{2})F_{\mu}^{2}(d\lambda
_{2})\bigg \vert \rightarrow0
\end{align*}
uniformly on $z\in \mathbb{R}^{2}$ as $s\rightarrow0$.
\end{example}

\section{Appendix}

\subsection{Proof of Theorem \ref{tight theorem}}

Since sublinear expectations $\{ \mathbb{\hat{E}}_{\alpha}\}_{\alpha
\in \mathcal{A}}$ are tight on $(\mathbb{R}^{n},C_{b,Lip}(\mathbb{R}^{n}))$,
then there exists a tight sublinear expectation $\mathbb{\hat{E}}$ on
$(\mathbb{R}^{n},C_{b,Lip}(\mathbb{R}^{n}))$ such that
\[
\mathbb{\hat{E}}_{\alpha}[\varphi]-\mathbb{\hat{E}}_{\alpha}[\varphi^{\prime
}]\leq \mathbb{\hat{E}}[\varphi-\varphi^{\prime}],\text{ for each }%
\varphi,\varphi^{\prime}\in C_{b,Lip}(\mathbb{R}^{n}).
\]
Let $\{N_{i}\}_{i=1}^{\infty}$ be a sequence of strictly increasing positive
integers satisfying for each $i\in \mathbb{N}$ there exists a $\phi_{i}\in
C_{b,Lip}(\mathbb{R}^{n})$ with $\mathbbm{1}_{\{|x|\geq N_{i}\}}\leq \phi_{i}$
such that $\mathbb{\hat{E}}[\phi_{i}]\leq1/i$. Denote $K_{i}:=\{|x|\leq
N_{i}\}$ for $i\in \mathbb{N}$. Let $\{ \varphi_{j}\}_{j=1}^{\infty}$
constitute a linear subspace of $C_{b,Lip}(\mathbb{R}^{n})$ such that for each
$K_{i}$, $\{ \varphi_{j}(x)\}_{j=1}^{\infty}|_{x\in K_{i}}$ is dense in
$C_{b,Lip}(K_{i})$.

We first claim that there exists a subsequence $\{ \mathbb{\hat{E}}%
_{\alpha_{n_{i}}}\}_{i=1}^{\infty}$ such that, for each $j\in \mathbb{N}$, $\{
\mathbb{\hat{E}}_{\alpha_{n_{i}}}[\varphi_{j}]\}_{i=1}^{\infty}$ is a Cauchy
sequence. Indeed, note that $\{ \mathbb{\hat{E}}_{\alpha_{n}}[\varphi
_{1}]\}_{n=1}^{\infty}$ is a bounded sequence, then there exists a subsequence
$\{ \mathbb{\hat{E}}_{\alpha_{n_{1}(i)}}\}_{i=1}^{\infty}\subset \{
\mathbb{\hat{E}}_{\alpha_{n}}\}_{n=1}^{\infty}$ such that $\{ \mathbb{\hat{E}%
}_{\alpha_{n_{1}(i)}}[\varphi_{1}]\}_{i=1}^{\infty}$ is a Cauchy sequence.
Similar procedure applies to $\{ \mathbb{\hat{E}}_{\alpha_{n_{1}(i)}}%
[\varphi_{2}]\}_{i=1}^{\infty}$, we find a subsequence $\{ \mathbb{\hat{E}%
}_{\alpha_{n_{2}(i)}}\}_{i=1}^{\infty}\subset \{ \mathbb{\hat{E}}%
_{\alpha_{n_{1}(i)}}\}_{i=1}^{\infty}$ such that $\{ \mathbb{\hat{E}}%
_{\alpha_{n_{2}(i)}}[\varphi_{2}]\}_{i=1}^{\infty}$ is a Cauchy sequence.
Repeating this process, we find that, for each $j\in \mathbb{N}$, a subsequence
$\{ \mathbb{\hat{E}}_{\alpha_{n_{j}(i)}}\}_{i=1}^{\infty}\subset \{
\mathbb{\hat{E}}_{\alpha_{n_{j-1}(i)}}\}_{i=1}^{\infty}$ such that $\{
\mathbb{\hat{E}}_{\alpha_{n_{j}(i)}}[\varphi_{j}]\}_{i=1}^{\infty}$ is a
Cauchy sequence. Taking the diagonal sequence $\{ \mathbb{\hat{E}}%
_{\alpha_{n_{i}(i)}}\}_{i=1}^{\infty}$, then for each $j\in \mathbb{N}$, $\{
\mathbb{\hat{E}}_{\alpha_{n_{i}(i)}}[\varphi_{j}]\}_{i=1}^{\infty}$ is a
Cauchy sequence. Thus, the claim follows by letting $n_{i}=n_{i}(i)$.

It remains prove that for each $\varphi \in C_{b,Lip}(\mathbb{R}^{n})$, $\{
\mathbb{\hat{E}}_{\alpha_{n_{i}}}[\varphi]\}_{i=1}^{\infty}$ is also a Cauchy
sequence. Denote $M:=\sup_{x\in \mathbb{R}^{n}}|\varphi(x)|$. For each
$\varepsilon>0$, we choose some $m>16M/\varepsilon$ such
that\ $\mathbbm{1}_{\{|x|\geq N_{m}\}}\leq \phi_{m}$ and $\mathbb{\hat{E}}%
[\phi_{m}]\leq \frac{\varepsilon}{16M}$. Let $\{ \varphi_{j_{l}}\}_{l=1}%
^{\infty}$ be a subsequence of $\{ \varphi_{j}\}_{j=1}^{\infty}$ such that
$\sup \limits_{l\in \mathbb{N}}\sup \limits_{x\in \mathbb{R}^{n}}|\varphi_{j_{l}%
}(x)|\leq M$ and
\[
\lim \limits_{l\rightarrow \infty}\sup \limits_{x\in K_{m}}|\varphi_{j_{l}%
}(x)-\varphi(x)|=0,
\]
Thus there exists some large integer $l_{0}$ such that $\sup \limits_{x\in
K_{m}}|\varphi_{j_{l_{0}}}(x)-\varphi(x)|\leq \varepsilon/8$, which implies
that for $x\in \mathbb{R}^{n}$
\begin{align*}
|\varphi_{j_{l_{0}}}(x)-\varphi(x)|  &  =|\varphi_{j_{l_{0}}}(x)-\varphi
(x)|\mathbbm{1}_{K_{m}}(x)+|\varphi_{j_{l_{0}}}(x)-\varphi
(x)|\mathbbm{1}_{K_{m}^{c}}(x)\\
&  \leq \sup \limits_{y\in K_{m}}|\varphi_{j_{l_{0}}}(y)-\varphi
(y)|\mathbbm{1}_{K_{m}}(x)+2M\mathbbm{1}_{\{|x|\geq N_{m}\}}\\
&  \leq \varepsilon/8+2M\phi_{m}(x).
\end{align*}
Then one easily gets $\mathbb{\hat{E}}\big[|\varphi_{j_{l_{0}}}-\varphi
|\big]\leq \varepsilon/4$. For this $l_{0}$, we know that there exist a large
integer $i_{0}$, such that $\big|\mathbb{\hat{E}}_{\alpha_{n_{i}}}%
[\varphi_{j_{l_{0}}}] -\mathbb{\hat{E}}_{\alpha_{n_{j}}}[\varphi_{j_{l_{0}}%
}]\big|\leq \varepsilon/2$ for any $i,j\geq i_{0}$. Then, it follows that
\[
\big|\mathbb{\hat{E}}_{\alpha_{n_{i}}}[\varphi]-\mathbb{\hat{E}}
_{\alpha_{n_{j}}}[\varphi]\big|\leq2\mathbb{\hat{E}}\big[|\varphi
-\varphi_{j_{l_{0}}}|\big]+\big|\mathbb{\hat{E}}_{\alpha_{n_{i}}}
[\varphi_{j_{l_{0}}}]-\mathbb{\hat{E}}_{\alpha_{n_{j}}}[\varphi_{j_{l_{0}}
}]\big|\leq \varepsilon.
\]
Therefore, we conclude that $\{ \mathbb{\hat{E}}_{\alpha_{n_{i}}}
[\varphi]\}_{i=1}^{\infty}$ is a Cauchy sequence. This completes the proof.

\subsection{The construction of nonlinear L\'evy process $(\tilde{L}%
_{t})_{t\in \lbrack0,1]}$}

In the following, we will construct a sublinear expectation $\mathbb{\tilde{
E}}:Lip(\tilde{\Omega})\rightarrow \mathbb{R}$ such that the canonical process
$(\tilde{L}_{t})_{t\in[0,1]}$ is a nonlinear L\'evy process.
%The main idea is based on Theorem 3.3 in \cite{GHJL2017}.
We divide it into the following three steps.

Step 1. For each $m\geq0$, let $\tau_{m}=2^{-m}$,%
\[%
\begin{array}
[c]{l}%
\displaystyle \mathcal{H}^{m}=\big \{ \varphi \big(\tilde{L}_{\tau_{m}}%
,\tilde{L}_{2\tau_{m}}-\tilde{L}_{\tau_{m}},\ldots,\tilde{L}_{2^{m}\tau_{m}%
}-\tilde{L}_{(2^{m}-1)\tau_{m}}\big):\forall \varphi \in C_{b,Lip}%
\big(\mathbb{R}^{2^{m}\times3d}\big)\big \} \text{, \ for }m\geq1,\\
\displaystyle \mathcal{H}^{0}=\big \{ \phi \big(\tilde{L}_{1}\big):\forall
\phi \in C_{b,Lip}\big(\mathbb{R}^{3d}\big)\big \}.
\end{array}
\]
Let $\{L_{\tau_{m}}^{n}\}_{n=1}^{\infty}$ be a sequence of i.i.d.
$\mathbb{R}^{3d}$-valued random variables defined on $(\Omega,\mathcal{H}%
,\mathbb{\hat{E}}_{1})$ in the sense that $L_{\tau_{m}}^{1}\overset{d}%
{=}L_{\tau_{m}}$, $L_{\tau_{m}}^{n+1}\overset{d}{=}L_{\tau_{m}}^{n}$ and
$L_{\tau_{m}}^{n+1}\perp \! \! \! \perp(L_{\tau_{m}}^{1},L_{\tau_{m}}%
^{2},\ldots,L_{\tau_{m}}^{n})$ for each $n\in \mathbb{N}$. For given $m\geq1$,
$\phi \big(\tilde{L}_{n\tau_{m}}-\tilde{L}_{(n-1)\tau_{m}}\big)$ with $1\leq
n\leq2^{m}$, and $\phi \in C_{b,Lip}(\mathbb{R}^{3d})$, define
\[
\mathbb{\tilde{E}}^{m}\big[\phi \big(\tilde{L}_{n\tau_{m}}-\tilde{L}%
_{(n-1)\tau_{m}}\big)\big]=\mathbb{\hat{E}}_{1}\big[\phi(L_{\tau_{m}}%
^{n})\big].
\]
For $\varphi \big(\tilde{L}_{\tau_{m}},\ldots,\tilde{L}_{2^{m}\tau_{m}}%
-\tilde{L}_{(2^{m}-1)\tau_{m}}\big)\in \mathcal{H}^{m}$, for some $\varphi \in
C_{b,Lip}\big(\mathbb{R}^{2^{m}\times3d}\big)$, define
\[
\mathbb{\tilde{E}}^{m}\big[\varphi \big(\tilde{L}_{\tau_{m}},\ldots,\tilde
{L}_{2^{m}\tau_{m}}-\tilde{L}_{(2^{m}-1)\tau_{m}}\big)\big]=\varphi_{0},
\]
where $\varphi_{0}$ is defined iteratively through
\[%
\begin{array}
[c]{c}%
\displaystyle \varphi_{2^{m}-1}(x_{1},x_{2},\ldots,x_{2^{m}-1})=\mathbb{\tilde
{E}}^{m}\big[\varphi \big(x_{1},x_{2},\ldots,x_{2^{m}-1},\tilde{L}_{2^{m}%
\tau_{m}}-\tilde{L}_{(2^{m}-1)\tau_{m}}\big)\big]\\
\displaystyle \varphi_{2^{m}-2}(x_{1},x_{2},\ldots,x_{2^{m}-2})=\mathbb{\tilde
{E}}^{m}\big[\varphi_{2^{m}-1}\big(x_{1},x_{2},\ldots,x_{2^{m}-2},\tilde
{L}_{(2^{m}-1)\tau_{m}}-\tilde{L}_{(2^{m}-2)\tau_{m}}\big)\big]\\
\displaystyle \vdots \\
\displaystyle \varphi_{1}(x_{1})=\mathbb{\tilde{E}}^{m}\big[\varphi
_{2}\big(x_{1},\tilde{L}_{2\tau_{m}}-\tilde{L}_{\tau_{m}}\big)\big]\\
\displaystyle \varphi_{0}=\mathbb{\tilde{E}}^{m}\big[\varphi_{1}\big(\tilde
{L}_{\tau_{m}}\big)\big].
\end{array}
\]
For $\phi \in C_{b,Lip}(\mathbb{R}^{3d})$, we also define
\[
\mathbb{\tilde{E}}^{0}\big[\phi \big(\tilde{L}_{1}\big)\big]=\mathbb{\hat{E}%
}_{1}\big[\phi(L_{1})\big].
\]
From the above definition we know that $(\tilde{\Omega},\mathcal{H}%
^{m},\mathbb{\tilde{E}}^{m})$ is a sublinear expectation space under which
$\tilde{L}_{t}-\tilde{L}_{s}\overset{d}{=}$ $\tilde{L}_{t-s}$ and $\tilde
{L}_{t}-\tilde{L}_{s}\perp \! \! \! \perp(\tilde{L}_{t_{1}},\ldots,\tilde
{L}_{t_{i}})$ for each $t_{i},s,t\in \mathcal{D}_{m}[0,1]:=\{l2^{-m}:0\leq
l\leq2^{m},l\in \mathbb{N}\}$ with $t_{i}\leq s\leq t$. Also, it can be checked
that $\mathbb{\tilde{E}}^{m}[\cdot]$ is consistent, i.e., for each $m\geq0$,
$\mathbb{\tilde{E}}^{m+1}[\cdot]=\mathbb{\tilde{E}}^{m}[\cdot]$ on
$\mathcal{H}^{m}$.

Step 2. Note that $\mathcal{H}^{m}\subset \mathcal{H}^{m+1}$,\ for each
$m\geq0$. Denote
\[
\mathcal{H}^{\infty}=\bigcup_{m\geq0}\mathcal{H}^{m}.
\]
Obviously, $\mathcal{H}^{\infty}\subset Lip(\tilde{\Omega})$ such that if
$\chi_{1},\cdots,\chi_{i}\in \mathcal{H}^{\infty}$, then $\varphi(\chi
_{1},\cdots,\chi_{i})\in \mathcal{H}^{\infty}$ for $\varphi \in C_{b,Lip}%
(\mathbb{R}^{i})$. For any $\chi \in \mathcal{H}^{\infty}$, there exists an
$m_{0}\in \mathbb{N}$ such that $\chi \in \mathcal{H}^{m_{0}}$, define
\[
\mathbb{\tilde{E}}[\chi]:=\mathbb{\tilde{E}}^{m_{0}}[\chi].
\]
Since $\mathbb{\tilde{E}}^{m}[\cdot]$\ is consistent, $\mathbb{\tilde{E}%
}:\mathcal{H}^{\infty}\rightarrow \mathbb{R}$ is a well-defined sublinear
expectation.

Step 3. We extend the sublinear expectation $\mathbb{\tilde{E}}:\mathcal{H}
^{\infty}\rightarrow \mathbb{R}$ to $\mathbb{\tilde{E}}:Lip(\tilde{\Omega
})\rightarrow \mathbb{R}$ and still use $\mathbb{\tilde{E}}$ for simplicity.
Denote $\mathcal{D}_{\infty}[0,1]:=\cup_{m\geq0}\mathcal{D}_{m}[0,1]$. For
each $\varphi(\tilde{L}_{t_{1}},\ldots,\tilde{L}_{t_{n}}-\tilde{L}_{t_{n-1}
})\in Lip(\tilde{\Omega})$ with $\varphi \in C_{b,Lip}(\mathbb{R}^{n\times3d}
)$, for each $t_{k}\in \lbrack0,1]$, $1\leq k\leq n$, we choose a sequence
$\{t_{k}^{i}\}_{i=1}^{\infty}\in \mathcal{D}_{\infty}[0,1]$ such that
$t_{k}^{i}<t_{k+1}^{i}$ and $t_{k}^{i}\downarrow t_{k}$ as $i\rightarrow
\infty$. Define
\[
\mathbb{\tilde{E}}\big[\varphi \big(\tilde{L}_{t_{1}},\ldots,\tilde{L}_{t_{n}%
}-\tilde{L}_{t_{n-1}}\big)\big]=\lim_{i\rightarrow \infty}\mathbb{\tilde{E}%
}\big[\varphi \big(\tilde{L}_{t_{1}^{i}},\ldots,\tilde{L}_{t_{n}^{i}}-\tilde
{L}_{t_{n-1}^{i}}\big)\big].
\]
It can be verified that the limit does not depend on the choice of
$\{t_{k}^{i}\}_{i=1}^{\infty}$. Indeed, for two descending sequences
$\{t_{k}^{i}\}_{i=1}^{\infty}$ and $\{t_{k}^{i^{\prime}}\}_{i^{\prime}%
=1}^{\infty}$ with the same limit $t_{k}$ as $i,i^{\prime}\rightarrow \infty$,
we assume that $t_{k}^{i}-t_{k}^{i^{\prime}}:=l_{k}\tau_{m_{k}}$ for some
$m_{k}\in \mathbb{N}$ and $1\leq l_{k}\leq2^{m_{k}}$. From the construction of
$\tilde{L}_{\cdot}$ and Remark \ref{ramark i.i.d.}, there exists a convergent
sequence $\{ \bar{S}_{n_{j}^{\ast}}^{\tau_{m_{k}}}\}_{j=1}^{\infty}$ such that
for $\varphi \in C_{b,Lip}(\mathbb{R}^{n\times3d})$
\[%
\begin{array}
[c]{l}%
\displaystyle \left \vert \mathbb{\tilde{E}}\big[\varphi \big(\tilde{L}
_{t_{1}^{i}},\ldots,\tilde{L}_{t_{n}^{i}}-\tilde{L}_{t_{n-1}^{i}
}\big)\big]-\mathbb{\tilde{E}}\big[\varphi \big(\tilde{L}_{t_{1}^{i^{\prime}}
},\ldots,\tilde{L}_{t_{n}^{i^{\prime}}}-\tilde{L}_{t_{n-1}^{i^{\prime}}
}\big)\big]\right \vert \\
\displaystyle \leq L_{\varphi}\mathbb{\tilde{E}}\big[\big(\sum_{k=1}
^{n}|\tilde{L}_{t_{k}^{i}}-\tilde{L}_{t_{k-1}^{i}}-\tilde{L}_{t_{k}
^{i^{\prime}}}+\tilde{L}_{t_{k-1}^{i^{\prime}}}|\big)\wedge2N_{\varphi}\big]\\
\displaystyle \leq2L_{\varphi}\sum_{k=1}^{n}\mathbb{\tilde{E}}\big[|\tilde
{L}_{t_{k}^{i}-t_{k}^{i^{\prime}}}|\wedge2N_{\varphi}\big]\\
\displaystyle=2L_{\varphi}\sum \limits_{k=1}^{n}\mathbb{\hat{E}}_{1}
\big[|L_{\tau_{m_{k}}}^{1}+\cdots+L_{\tau_{m_{k}}}^{l_{k}}|\wedge2N_{\varphi
}\big]\\
\displaystyle=2L_{\varphi}\sum_{k=1}^{n}\lim \limits_{j\rightarrow \infty
}\mathbb{\hat{E}}\big[|\bar{S}_{l_{k}n_{j}^{\ast}}^{t_{k}^{i}-t_{k}%
^{i^{\prime}}}|\wedge2N_{\varphi}\big]\\
\displaystyle \leq2L_{\varphi}\sum_{k=1}^{n}\lim \limits_{j\rightarrow \infty
}\mathbb{\hat{E}}\big[\big|(t_{k}^{i}-t_{k}^{i^{\prime}})^{1/2}(l_{k}
n_{j}^{\ast})^{-1/2}S_{l_{k}n_{j}^{\ast}}^{1}\big|\wedge2N_{\varphi}\big]\\
\displaystyle \text{ \  \ }+2L_{\varphi}\sum_{k=1}^{n}\lim \limits_{j\rightarrow
\infty}\mathbb{\hat{E}}\big[\big|(t_{k}^{i}-t_{k}^{i^{\prime}})(l_{k}%
n_{j}^{\ast})^{-1} S_{l_{k}n_{j}^{\ast}}^{2}\big|\wedge2N_{\varphi}\big]\\
\displaystyle \text{ \  \ }+2L_{\varphi}\sum_{k=1}^{n}\lim \limits_{j\rightarrow
\infty}\mathbb{\hat{E}}\big[\big|(t_{k}^{i}-t_{k}^{i^{\prime}})^{1/\alpha
}(l_{k}n_{j}^{\ast})^{-1/\alpha}S_{l_{k}n_{j}^{\ast}}^{3}\big|\wedge
2N_{\varphi}\big]\\
\displaystyle \leq2L_{\varphi}\sum_{k=1}^{n}\mathbb{\hat{E}}_{1}%
\big[\big(|t_{k}^{i}-t_{k}^{i^{\prime}}|^{1/2} |\xi_{1}|\big)\wedge
2N_{\varphi}\big]\\
\displaystyle \text{ \  \ }+2L_{\varphi}\sum_{k=1}^{n}\mathbb{\hat{E}}%
_{1}\big[\big(|t_{k}^{i}-t_{k}^{i^{\prime}}||\eta_{1}|\big)\wedge2N_{\varphi
}\big]+2L_{\varphi}M_{z}\sum_{k=1}^{n}|t_{k}^{i}-t_{k}^{i^{\prime}}%
|^{1/\alpha}\\
\displaystyle \rightarrow0\text{, \ as }i,i^{\prime}\rightarrow \infty,
\end{array}
\]
where $L_{\varphi}>0$ is the Lipschitz constant of $\varphi$, $K_{\varphi
}:=|\varphi|_{0}$ and $N_{\varphi}:=\frac{K_{\varphi}}{L_{\varphi}}$.
%\begin{align*}
%\bar{S}_{l_{k}n_{j}^{\ast}}^{t_{k}^{i}-t_{k}^{i^{^{\prime}}}} &
%=\Big((\tau_{m_{k}}/n_{j}^{\ast})^{1/2}(S_{l_{k}n_{j}^{\ast}}^{1}%
%-S_{(l_{k}-1)n_{j}^{\ast}}^{1}),(\tau_{m_{k}}/n_{j}^{\ast})(S_{l_{k}%
%n_{j}^{\ast}}^{2}-S_{(l_{k}-1)n_{j}^{\ast}}^{2}),(\tau_{m_{k}}/n_{j}^{\ast
%})^{1/\alpha}(S_{l_{k}n_{j}^{\ast}}^{3}-S_{(l_{k}-1)n_{j}^{\ast}}^{3})\Big)\\
%&  \text{ \  \ }+\cdots+\Big((\tau_{m_{k}}/n_{j}^{\ast})^{1/2}S_{n_{j}^{\ast}%
%}^{1},(\tau_{m_{k}}/n_{j}^{\ast})S_{n_{j}^{\ast}}^{2},(\tau_{m_{k}}%
%/n_{j}^{\ast})^{1/\alpha}S_{n_{j}^{\ast}}^{3}\Big).
%\end{align*}
%By a similar analysis as in \cite[Theorem 3.3]{GHJL2017},
Also, $\mathbb{\tilde{E}}:Lip(\tilde{\Omega})\rightarrow \mathbb{R}$ is a
well-defined sublinear expectation, that is, if $\varphi(\tilde{L}_{t_{1}%
},\ldots,$ $\tilde{L}_{t_{n}}-\tilde{L}_{t_{n-1}})=\varphi^{\prime}(\tilde
{L}_{t_{1}},\ldots,\tilde{L}_{t_{n}}-\tilde{L}_{t_{n-1}})$ with $\varphi
,\varphi^{\prime}\in C_{b,Lip}(\mathbb{R}^{n\times3d})$, then
\[
\mathbb{\tilde{E}}\big[\varphi \big(\tilde{L}_{t_{1}},\ldots,\tilde{L}_{t_{n}%
}-\tilde{L}_{t_{n-1}}\big)\big]=\mathbb{\tilde{E}}\big[\varphi^{\prime
}\big(\tilde{L}_{t_{1}},\ldots,\tilde{L}_{t_{n}}-\tilde{L}_{t_{n-1}%
}\big)\big].
\]
Moreover, for each $t_{i},s,t\in \lbrack0,1]\ $with $t_{i}\leq s\leq t$,
$\tilde{L}_{t}-\tilde{L}_{s}\overset{d}{=}$ $\tilde{L}_{t-s}$ and $\tilde
{L}_{t}-\tilde{L}_{s}\perp \! \! \! \perp(\tilde{L}_{t_{1}},\ldots,\tilde
{L}_{t_{i}})$ under $\mathbb{\tilde{E}}$. Thus, $(\tilde{\Omega}%
,Lip(\tilde{\Omega}),\mathbb{\tilde{E}})$ is a sublinear expectation space on
which the canonical process $(\tilde{L}_{t})_{t\in \lbrack0,1]}$ is a nonlinear
L\'{e}vy process.

%%%%%%%%%%%%%%%%%%%%%%%%%%%%%%%%%%%%%%%%%%%%%%%%%%%%%%%%%%%%%%%%%%%%%%%%

\end{document}